\newcommand{\DD}{\mathbb{D}}
\newcommand{\pr}{\mathbb{P}}
\newcommand{\E}[0]{\mathbb{E}}
\newcommand{\beq}[1]{\begin{equation}\label{#1}}
\newcommand{\enq}[0]{\end{equation}}
\newcommand{\mn}[0]{\medskip\noindent}
\newcommand{\nin}[0]{\noindent}
\newcommand{\sub}[0]{\subseteq}
\newcommand{\sm}[0]{\setminus}
\renewcommand{\dots}[0]{,\ldots,}
\newcommand{\ov}[0]{\overline}
\newcommand{\less}[0]{~~\mbox{\raisebox{-.6ex}{$\stackrel{\textstyle{<}}{\sim}$}}~~}
\newcommand{\more}[0]{~~\mbox{\raisebox{-.9ex}{$\stackrel{\textstyle{>}}{\sim}$}}~~}
\newcommand{\pee}[0]{{\mathcal P}}
\newcommand{\U}[0]{{\mathcal U}}
\newcommand{\ra}[0]{\rightarrow}
\newcommand{\Ra}[0]{\Rightarrow}
\newcommand{\TTT}[0]{T}
\newcommand{\supp}[0]{{\rm supp}}
\newcommand{\sfa}[0]{{\sf a}}
\newcommand{\sfb}[0]{{\sf b}}
\newcommand{\sfc}[0]{{\sf c}}
\newcommand{\ttt}[0]{t}
\newcommand{\nv}[0]{\nabla_v}
\newcommand{\0}[0]{\emptyset}
\newcommand{\C}[2]{{{#1}\choose{{#2}}}}
\newcommand{\Cc}[0]{\tbinom}
\newcommand{\ga}[0]{\alpha }
\newcommand{\gb}[0]{\beta }
\newcommand{\gc}[0]{\gamma }
\newcommand{\gd}[0]{\delta }
\newcommand{\gD}[0]{\Delta }
\newcommand{\gG}[0]{\Gamma }
\newcommand{\gl}[0]{\lambda }
\newcommand{\go}[0]{\omega}
\newcommand{\gO}[0]{\Omega}
\newcommand{\gs}[0]{\sigma}
\newcommand{\gS}[0]{\Sigma}
\newcommand{\gz}[0]{\zeta}
\newcommand{\gzz}[0]{\zeta}
\newcommand{\eps}[0]{\varepsilon }
\newcommand{\vt}[0]{\vartheta}
\newcommand{\vs}[0]{\varsigma}
\newcommand{\vr}[0]{\varrho}
\newcommand{\III}[0]{I}
\newcommand{\JJJ}[0]{J}
\newcommand{\prh}[1][]{\pr_h}
\newcommand{\oH}[1][]{\ov{H}}
\newcommand{\oB}[1][]{\ov{B}}
\newtheorem{thm}{Theorem}[section]
\newtheorem{prop}[thm]{Proposition}
\newtheorem{lemma}[thm]{Lemma}
\newtheorem{obs}[thm]{Observation}
\newtheorem{cor}[thm]{Corollary}
\newtheorem{conj}[thm]{Conjecture}
\theoremstyle{definition}
\newtheorem{claim}[thm]{Claim}
\newcommand{\mbone}[0]{\mathbbm{1}}
\newcommand{\QQQ}[0]{A}
\newcommand{\kkk}[0]{k}
\newcommand{\uuu}[0]{u}
\newcommand{\NNN}[0]{W}
\newcommand{\LLL}[0]{T}
\newcommand{\UUU}[0]{U}
\newcommand{\VVV}[0]{V}
\newcommand{\vvv}[0]{v}
\newcommand{\www}[0]{w}
\newcommand{\WWW}[0]{W}
\newcommand{\YYY}[0]{Z}
\newcommand{\ZZZ}[0]{Z}
\newcommand{\gSR}[0]{R}
\newcommand{\cha}[1]{\textcolor{teal}{#1}}
\newcommand{\bbb}[0]{c}
\begin{document}

\title{Asymptotics for palette sparsification from variable lists}

\author{Jeff Kahn \and Charles Kenney}
\thanks{Department of Mathematics, Rutgers University}
\thanks{JK was supported by NSF Grant DMS1954035 and by a grant from the Simons Foundation (915224, Kahn)}
\email{jkahn@math.rutgers.edu,ctk47@math.rutgers.edu}
\address{Department of Mathematics, Rutgers University \\
Hill Center for the Mathematical Sciences \\
110 Frelinghuysen Rd.\\
Piscataway, NJ 08854-8019, USA}

\begin{abstract}

It is shown that the following holds for each $\eps >0$.
For $G$ an $n$-vertex graph of maximum degree $D$, lists $S_v$
of size $D+1$ (for $v\in V(G)$),
and $L_v$ chosen uniformly from 
the ($(1+\eps)\ln n$)-subsets of $S_v$ (independent of other choices),
\[
\mbox{$G$ admits a proper coloring $\gs$ with $\gs_v\in L_v$ $\forall v$}
\]
with probability tending to 1 as $D\ra \infty$.

When each $S_v $ is $\{1\dots D+1\}$, this 
is an asymptotically optimal version of the ``palette sparsification'' theorem of 
Assadi, Chen and Khanna that was proved in an earlier paper by the present authors.

\end{abstract}

\maketitle
\section{Introduction}\label{Intro}

Recall that a \emph{list coloring} of a graph $G$ (on vertex set $V$)
is a proper vertex coloring in which each vertex receives a color from some
prescribed list (particular to that vertex).  Formally, 
for $L=(L_v:v\in V)$ with $L_v\sub \gG = \{\mbox{``colors''}\}$,
an \emph{L-coloring} of $G$ is a (proper) coloring $\gs$ of $V$ with
$\gs_v\in L_v$ $\forall v$.  If such exists then we also say $G$ is \emph{L-colorable}.
(See \cite{Vizing,ERT} for origins and e.g.\ \cite{Diestel} for general list coloring background.)

In this paper we are interested in
the situation
where $\gG$ is arbitrary and, for some $\ell$,
each $L_v$ is a uniform $\ell$-subset of a given $S_v\sub \gG$.
(So we assume $|S_v|\geq \ell ~\forall v$.
In our context this model was first considered by Alon and Assadi \cite{AA}.)
The following, our main result, was Conjecture~10.3 in \cite{APS}
(where we said we \emph{believed} we could prove it).
Here and throughout the paper we use $n=|V|$ and $\log=\ln$.
\begin{thm}\label{CKK}
For fixed $\gd>0$, 
if 
$\gD_G \leq D$, $|S_v|=D+1$ $\forall v$, and $\ell =(1+\gd)\log n$, 
then $G$ is L-colorable w.h.p.\footnote{\emph{with high probability}, meaning with probability 
tending to 1 as $n\ra \infty$.}
\end{thm}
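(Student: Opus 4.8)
The plan is to run the argument of our earlier treatment of the uniform case ($S_v=\{1\dots D+1\}$) essentially verbatim, isolating and re-proving the single module in which uniformity of the palettes is used. First, we may assume $\ell<D+1$, since otherwise $L_v=S_v$ and $G$ is $L$-colorable by the greedy bound $\chi_\ell(G)\le\gD_G+1\le D+1$; thus $D$ is large, and in fact $D=\gO(\log n)$. Next, fixing a small $\eps=\eps(\gd)>0$, apply the standard almost-clique decomposition $V=V_{\mathrm{sp}}\sqcup K_1\sqcup\cdots\sqcup K_m$, in which each $v$ belonging to an almost-clique $K$ has at most $\eps D$ non-neighbours in $K$ and at most $\eps D$ neighbours outside $K$ (so $(1-\eps)D\le|K|\le(1+\eps)D+1$), while each $v\in V_{\mathrm{sp}}$ is $\eps$-sparse (its neighbourhood spans $\gO(\eps D^2)$ non-edges). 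I would then colour in two stages: first the almost-cliques, one at a time in any order, each $K$ coloured within itself while treating the colours already given to $v$'s outside-neighbours---at most $\eps D$ of these at any $v$, so that at $v$ the number of forbidden colours plus $\deg_{G[K]}(v)$ is at most $\deg_G(v)\le D$---as forbidden; and then $V_{\mathrm{sp}}$. The bookkeeping that glues the two stages together (the order in which the $L_v$ are revealed, the conditioning, and the fact that the cliques' outside-neighbour colours stay under control) is unchanged from before.

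The sparse stage needs neither new ideas nor new hypotheses. Using the sparsity of $V_{\mathrm{sp}}$ one shows that, after the clique stage, each $v\in V_{\mathrm{sp}}$ still has $\gO(D)$ admissible colours of $S_v$---and here a non-uniform $S_v$ only helps, since a colour used on $N(v)$ but lying outside $S_v$ costs $v$ nothing---and then a Lovász-Local-Lemma / conditioning argument, insensitive to the shape of the $S_v$, finishes $V_{\mathrm{sp}}$ w.h.p. So the entire burden of the generalization is the clique stage, which reduces to: for an almost-clique $K$, any $(F_v:v\in K)$ with $|F_v|\le\eps D$ and $\deg_{G[K]}(v)+|F_v|\le D$, and the sampled lists $(L_v)$, the graph $G[K]$ admits, with failure probability $\mathrm{poly}(D)\cdot e^{-\gO(\ell)}$ (small enough that the sum over the $O(n/D)$ cliques tends to $0$), a proper colouring $\gs$ with $\gs_v\in L_v\sm F_v$ for all $v\in K$.

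To prove this I would, as in the uniform case, exploit that $G[K]$ is nearly complete, so a proper colouring is ``almost rainbow'': set aside an excess set $W\sub K$ of size $\gO(\eps D)$, realize the colouring of $K\sm W$ as a system of distinct representatives---equivalently, a matching saturating $K\sm W$ in the bipartite graph between $K\sm W$ and the colours, with $v$ joined to $L_v\sm F_v$---and fit $W$, together with the few non-edges of $G[K]$ along which colours may be reused, into the remaining room. The existence of the SDR for the \emph{full} lists is easy and robust: for any $A\sub K\sm W$ one has $|\bigcup_{v\in A}(S_v\sm F_v)|\ge|S_{v_0}\sm F_{v_0}|\ge D+1-\gO(\eps D)\ge|A|$ once $|W|$ is chosen so that $|K\sm W|\le D+1-\gO(\eps D)$, using $\deg_{G[K]}(v)\le D$; the content is to show that this survives sparsification of each $S_v$ to a uniform $\ell$-subset $L_v$ with $\ell=(1+\gd)\log n$. (Alternatively one could phrase the clique colouring as a fractional relaxation and round it by a Rödl-nibble-type argument, in the spirit of Kahn's asymptotic list-chromatic-index results; either way the palette-sparsification constraint is what makes it delicate.)

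I expect the main obstacle to be exactly this last point, together with the excess set $W$. In the uniform case every $S_v$ equals a common $(D+1)$-set $S$, so the SDR analysis collapses to checking that $\bigcup_{v\in K\sm W}L_v$ misses at most $\gO(\eps D)$ of the $\le D+1$ colours of $S$, a one-line union bound over the elements of $S$; and $W$ causes no trouble because every $L_v$ lives inside the same $S$. Neither is available for arbitrary $S_v$: $\bigcup_v S_v$ may be unbounded, so there is no small colour set to union over, while a union bound over the $2^{\gO(D)}$ subsets $A\sub K$ is hopelessly lossy against the per-$A$ failure probability $e^{-\gO(\ell)}=n^{-\gO(1)}$. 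The route I would take is the standard one for showing Hall-type conditions survive random sparsification---restrict to \emph{minimal} deficient sets $A$, so that $N(A)$ is forced small and hence each $L_v\,(v\in A)$ is forced into a small colour set, whereupon a count $\sum_A(\#\text{ choices})\cdot(\text{probability})$ converges provided $\ell\gg\log D$, which holds since $\ell=(1+\gd)\log n\ge(1+\gd)\log D$. What makes this genuinely harder than the uniform case is that the $\gO(\eps D)$ of slack in $|S_v|$ must be shared, without overspending, between the $W$-step and the SDR-step: a vertex of $K$ with $|K|>D+1$ has $\deg_{G[K]}(v)\ge D-\eps D$, hence essentially no room once its $K$-neighbours are coloured rainbow, so a correct treatment must instead arrange a controlled number of colour-reuses along the (forced) non-edges of $G[K]$ and coordinate these with the SDR---bookkeeping that is automatic when all palettes coincide but genuinely constrained here. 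Pushing all of this through while keeping the list size down to $(1+\gd)\log n$, rather than some larger multiple of $\log n$, is where I expect the real effort to lie.
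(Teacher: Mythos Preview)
Your high-level plan---decompose into sparse vertices and almost-cliques, then handle the two pieces separately---matches the paper's, but two things go wrong.  First, you have inverted the order of the phases and then dismissed the sparse phase as ``unchanged.''  The paper (like the earlier uniform-case paper) colours $V^*$ \emph{first} and the clusters second; your assertion that after the clique stage each sparse $v$ ``still has $\gO(D)$ admissible colours of $S_v$'' is not argued and is not in general true, since a sparse $v$ can have nearly all its neighbours inside clusters and a cluster colouring over which you have no control can exhaust most of $S_v$.  More to the point, the sparse phase is \emph{not} palette-insensitive.  The paper runs the same random-initial-colouring / Reed--Sudakov reduction as in the uniform case, but the source of the needed slack changes: in the uniform case it comes only from colour repetitions among non-adjacent neighbours of $v$; with variable palettes these may be too few, and the paper needs a new dichotomy (Proposition~\ref{sparseCases}): either many such ``fraternal'' pairs exist, or many neighbours $w$ of $v$ carry many colours in $S_w\setminus S_v$, and in the latter case slack comes from neighbours receiving colours outside $S_v$.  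You gesture at this (``a colour \dots\ lying outside $S_v$ costs $v$ nothing''), but it is a genuine extra case requiring its own concentration analysis, not a free simplification.

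On the dense side, your diagnosis that a naive union bound over $A\subseteq K$ is hopeless is correct, but ``restrict to minimal deficient sets'' is not what works here and is unlikely to reach $(1+\gd)\log n$.  The paper's cluster analysis splits into \emph{three} regimes according to how the allowed-colour sets $T_v$ overlap (governed by \eqref{R1} and \eqref{Ssize}), two of which have no counterpart in the uniform case.  Your ``controlled colour-reuses along non-edges'' is the right picture for the regime where \eqref{R1} holds (the paper's ``Process,'' Lemma~\ref{Lsucc}), but only for that one.  In the regime where \eqref{R1} fails but \eqref{Ssize} holds, the Hall-condition verification for large $A$ is done not by any minimal-set count but by a moment-generating-function bound on the number of colours missed by $A$, using negative association of the miss-indicators together with a convexity optimisation (Proposition~\ref{Pcvx}) to pin down the worst colour-degree profile; and the third regime requires a further multi-step matching argument.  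None of this is visible from the minimal-deficient-set heuristic.
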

\nin
Equivalently, 
the theorem holds with $\gd$ a slow enough $o(1)$;
in fact $\gd =\go(1/\log n)$ should 
suffice, but we don't show this.

Theorem~\ref{CKK} with $\gG= S_v=[D+1]:=\{1\dots D+1\}$ $\forall v$ was 
suggested by Assadi and proved in
\cite{APS}, motivated by 
the following ``palette sparsification'' 
theorem of Assadi, Chen and Khanna~\cite{ACK}.
(See \cite{ACK}, as well as \cite{AA,HFNT}, 
for the surprising algorithmic consequences.)
\begin{thm}\label{TACK}
There is a fixed C such that if $\gD_G = D$ and the $L_v$'s are drawn uniformly and independently
from the ($C\log n$)-subsets of
$\gG=[D+1]$, then G is L-colorable w.h.p.
\end{thm}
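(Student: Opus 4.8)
The plan is to reduce, via the almost-clique decomposition, to two nearly independent subproblems — coloring a ``dense'' part and a ``sparse'' part — carried out in sequence, with each list split into a main portion and a small reserve. Fix a small $\eps=\eps(\gd)>0$. The decomposition produces a partition $V=V_0\cup C_1\cup\cdots\cup C_k$, $k=O(n/D)$, with every $v\in V_0$ \emph{sparse} (its neighborhood missing at least $\eps D^2$ edges) and each $C_i$ an \emph{almost-clique}: $(1-\eps)D\le|C_i|\le D+1$, every $v\in C_i$ having at most $\eps D$ non-neighbors inside $C_i$ and at most $\eps D$ neighbors outside. I would also write $L_v=L_v'\cup L_v''$ with $L_v'$ a uniform $((1+\gd/2)\log n)$-subset of $S_v$ and $L_v''$ a uniform $((\gd/2)\log n)$-subset of $S_v\sm L_v'$, so $L_v'$ does the bulk of the work and $L_v''$ is held back to finish. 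One then colors $C_1\dots C_k$ in order and $V_0$ last, forbidding at each vertex only the colors committed to its \emph{previously processed} neighbors; thus a vertex of $C_i$ is forbidden at most $\eps D$ colors (from earlier almost-cliques), while a vertex of $V_0$ must avoid essentially its whole — by then colored — neighborhood.

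The dense part is the heart of the argument. Fix an almost-clique $C=C_i$ and condition on arbitrary committed sets $F_v\sub S_v$, $|F_v|\le\eps D$ ($v\in C$); since these depend only on lists of vertices outside $C$, the lists $(L_w')_{w\in C}$ remain independent and uniform, and the task is to find a proper coloring $\gs$ of $C$ with $\gs_v\in L_v'\sm F_v$. Because $C$ is within $\eps D$ per vertex of being a clique, this is, up to the bookkeeping of the internal non-edges, a system-of-distinct-representatives problem, the slack to absorb $F_v$ coming from the non-edges and from $|S_v|-|C|+1\ge 0$; by a Hall-type criterion it fails only if some $T\sub C$ and color set $U$ with $|U|<|T|-\gO(\eps D)$ have $L_v'\sub U\cup F_v$ for all $v\in T$, an event of probability at most $\bigl(\tfrac{|U|+\eps D}{D+1}\bigr)^{|T|(1+\gd/2)\log n}$ for fixed $T,U$. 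Summing over $T$, $U$ and the $k$ almost-cliques is $o(1)$ precisely at $\ell$ of order $(1+\gd)\log n$: terms with small $|T|$ are crushed by $\eps^{|T|(1+\gd/2)\log n}$ once $\eps$ is small, while the binding term — $|T|=|C|$, the event that a single color is absent from every list in $C$ — contributes $\sim|S|\bigl(1-(1+\gd/2)\tfrac{\log n}{D+1}\bigr)^{|C|}\approx|S|n^{-(1+\gd/2)}$ per clique, so $k|S|n^{-(1+\gd/2)}=o(1)$. Varying the $S_v$ only helps here (a commonly missed color must lie in $\bigcap_{v\in C}S_v$), though the estimates must be made uniform over all admissible $(S_v)$ and all $(F_v)$; a routine large-deviation bound, absorbed into the choice of $\eps$, also ensures that w.h.p.\ $|L_v'\sm F_v|\ge\log n$ for every $v\in\bigcup_iC_i$, so the reduction is not vacuous.

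For the sparse part, local sparsity forces color repetitions: in a ``typical'' coloring of $V\sm V_0$ — the one produced by the natural randomized coloring procedure, typical w.h.p.\ at each sparse vertex by concentration together with the Lov\'asz Local Lemma — the neighbors of any $v\in V_0$ use at most $D-\gO_\eps(D)$ distinct colors, leaving $\gO_\eps(D)$ colors of $S_v$ available at $v$. I would extend to $V_0$ by a bounded-round ``nibble'' using fresh $\gO(\log n)$-slices of the $L_v'$: each round colors a random share of the still-uncolored sparse vertices, each keeping $\gO_\eps(D)$ available colors in its remaining list (hence colored that round with probability bounded below), so after $O_\eps(1)$ rounds only a negligible, still-sparse remainder is left, and it is finished greedily from the reserves $L_v''$ (size $(\gd/2)\log n$ suffices for the last union bound). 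Throughout, the list size $\log n$ is used only to push union bounds over the at most $n$ relevant bad events down to $o(1)$.

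The main obstacle is the sharp SDR estimate inside the almost-cliques: obtaining the optimal $1+\gd$ (rather than the large constant of \cite{ACK}), uniformly over arbitrary palettes, forces an essentially tight bound on the failure of the Hall-type condition for independent uniform $\ell$-subsets of \emph{varying} $(D+1)$-sets, valid uniformly over all almost-cliques, all deficient pairs $(T,U)$, all committed families $(F_v)$ of size $\le\eps D$, and all palette families $(S_v)$ — all while correctly crediting the internal non-edges, which is exactly where the almost-clique stops being a clique and the bare matching heuristic must be repaired; the delicate regime is $|C|$ near $D+1$, where the $\bigl(\tfrac{|U|+\eps D}{D+1}\bigr)$-type bound above degrades and one must exploit that external degrees (hence the $|F_v|$) are then themselves tiny. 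A secondary difficulty is the global bookkeeping: checking that the committed sets $F_v$ really stay $O(\eps D)$ under the processing order, that the ``typical'' coloring of $V\sm V_0$ can be built while respecting the lists, and that the dense and sparse phases, though sequential, decouple probabilistically as claimed.
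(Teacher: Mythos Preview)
A framing note first: Theorem~\ref{TACK} is quoted here as background from \cite{ACK}, not proved in the paper; what the paper proves is the stronger Theorem~\ref{CKK}, which implies it. So the comparison is to the paper's proof of Theorem~\ref{CKK} (and you are in fact aiming for that sharper statement, not merely for some large $C$).

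There are two genuine gaps. First, your assertion that the almost-clique decomposition yields $|C_i|\le D+1$ is false: Lemma~\ref{LACK} only gives $|C_i|\le(1+6\eps)D$, and clusters with $|C_i|>D+1$ do occur. For such a cluster, with all $S_v=[D+1]$, there is \emph{no} system of distinct representatives, so your Hall/SDR framework collapses before it starts. This is not a corner case but the hardest regime; the paper devotes the ``Process'' of Section~\ref{Process} and the regime~\eqref{R1} of Section~\ref{Clusters} to a precoloring step that assigns common colors to nonadjacent pairs in $C_i$ before seeking a matching on what remains. Your remark that ``the delicate regime is $|C|$ near $D+1$'' understates it: the regime is $|C|>D+1$, and it requires an idea your sketch does not contain.

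Second, your phase order is the reverse of the paper's, and the reversal creates exactly the difficulty you flag at the end. The paper colors $V^*$ \emph{first}, via a random tentative assignment that by design produces the $\gO_\eps(D)$ color repetitions in each sparse neighborhood (Section~\ref{Sparse}), and only then colors the clusters---the cluster argument being valid for an \emph{arbitrary} prior coloring of $V^*$ (see the opening of Section~\ref{Clusters}). In your order you need the already-committed dense coloring to look ``typical'' from each sparse vertex, but the dense coloring you actually build is a deterministic SDR extracted from the random lists, and there is no reason it should exhibit the repetitions your sparse phase assumes; your appeal to ``the natural randomized coloring procedure'' for $V\sm V_0$ is inconsistent with the Hall-based dense phase you described just above it.
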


As mentioned above, the more general (and from a list-coloring standpoint natural)
case of variable lists was introduced by
Alon and Assadi \cite{AA} (see also \cite{HFNT}), who showed
(with $G$ again on $V=[n]$ and $d_v:=d_G(v)$):
\begin{thm}\label{TAA}
In each of the following situations $G$ is L-colorable w.h.p.

\nin
{\rm (a)}  $|S_v|=(1+\eps)d_v$ ($\forall v$) and $\ell = C_\eps\log n$ 
(for any fixed $\eps >0$ and a suitable $C_\eps$).

\nin
{\rm (b)}  $S_v=[d_v+1]$ ($\forall v$) and $\ell = C\log n$ 
(for a suitable $C$).
\end{thm}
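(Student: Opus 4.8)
The plan is to rerun the argument by which we proved the $S_v=[D+1]$ case in \cite{APS}, now carrying the colour sets $S_v$ through every step and exploiting at each stage the principle that letting the $S_v$ differ can only help. First one removes a trivial parameter range: if $D\le\ell$ then $|L_v|=\ell>d_v$ for every $v$ and $G$ is $L$-colourable greedily and deterministically, so one may assume $D>\ell=(1+\gd)\log n$, hence $D\to\infty$. Then one applies the structural decomposition used in \cite{APS}, which (after a harmless preprocessing) partitions $V$ into a set $V_{\mathrm{sp}}$ of \emph{sparse} vertices --- each with a positive proportion of the pairs in its neighbourhood being non-edges --- together with a family of \emph{clusters} $B_1,B_2,\ldots$, each a near-clique. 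The colouring is built in two phases, clusters first and sparse vertices second, and the two things one must arrange are that each phase succeeds w.h.p.\ with room to spare and that the cluster phase is carried out ``typically'', so that it does not exhaust the palettes $L_v$ of the sparse vertices.

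For a fixed cluster $B$, colouring it properly from $(L_v:v\in B)$ is exactly the problem of finding a system of distinct representatives, so by Hall's theorem a failure forces a colour set $C$ together with at least $|C|+1$ vertices of $B$ whose lists lie in $C$. The variable universes enter only through a monotone coupling: replacing the $S_v$ ($v\in B$) by sets that agree as much as possible can only increase the chance that many of the $L_v$ lie in a common small colour set, so it suffices to bound the failure probability in this ``maximal agreement'' case, which is the near-clique situation already analysed in \cite{APS}. One then runs a careful union bound over colour sets $C$ --- the estimate depending mildly on how $|B|$ compares with $D$, and the small-cluster and ``$L_v$ pairwise disjoint'' ranges being easy --- and summing over the clusters gives $o(1)$. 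For ``typicality'' one colours the clusters not arbitrarily but at random, as in \cite{APS}, so that afterwards each sparse vertex $v$ still has $(1-o(1))\ell$ of its colours unused by its coloured neighbours, which holds because $v$ is locally sparse and hence its coloured neighbours repeat colours among themselves.

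It remains to colour $V_{\mathrm{sp}}$, and here, since each such vertex is sparse, a semi-random (``nibble'') colouring of the type run in \cite{APS} already succeeds with list length only $(1+\gd)\log n$: one iterates rounds in which every uncoloured vertex tentatively takes a uniform available colour and keeps it unless a neighbour picked the same, tracks for each $v$ its number of still-available colours and its set of surviving colours, and finishes once the leftover graph is sparse enough, via the Lov\'asz Local Lemma. The new ingredient is that the relevant state of $v$ now includes the set $S_v$; one reruns the per-round estimates noting that a colour chosen by a neighbour $u$ can ever threaten $v$ only if it lies in $S_u\cap S_v$, so differing universes only lower collision rates and never slow the procedure. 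Combining the two phases with the preprocessing and union-bounding over the polynomially many bad events then yields the theorem.

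The step I expect to be the real obstacle is this last one: re-establishing, in the presence of per-vertex universes, the concentration estimates and the ``enough colours survive'' bounds that drive the nibble of \cite{APS}, and in particular controlling the way the intersections $S_u\cap S_v$ couple the evolution of neighbouring vertices' available-colour counts --- all while keeping the accumulated failure probability $o(1)$. A clean global monotonicity reduction of the variable-$S_v$ case to $S_v=[D+1]$ does not seem to be available, so this heterogeneous bookkeeping must be carried through the entire argument rather than removed at the outset; a secondary difficulty is the interface between the two phases, namely arranging that a single ``typical'' colouring of the clusters is good for essentially all of $V_{\mathrm{sp}}$ at once.
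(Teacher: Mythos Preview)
Theorem~\ref{TAA} is not proved in this paper. It is quoted as a prior result of Alon and Assadi~\cite{AA}; the paper states it only for context, alongside Theorem~\ref{TACK}, before introducing its own main result (Theorem~\ref{CKK}) and Conjecture~\ref{CKK2}. So there is no ``paper's own proof'' to compare your proposal against.

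Your proposal is also not really aimed at Theorem~\ref{TAA}. You work throughout with $\ell=(1+\gd)\log n$ and with the \emph{maximum} degree $D$, whereas Theorem~\ref{TAA} concerns $\ell=C_\eps\log n$ (or $C\log n$) with a large unspecified constant, and the list sizes $|S_v|$ are tied to the \emph{individual} degrees $d_v$. What you have sketched is closer to an attempt at Theorem~\ref{CKK}.

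If that is what you intend, two points of friction with the paper's actual argument for Theorem~\ref{CKK} are worth flagging. First, the order of the two phases is reversed: the paper colours the sparse part $V^*$ first (reducing to Reed--Sudakov, Theorem~\ref{TRS}) and only then extends to the clusters one at a time; your ``clusters first, then nibble'' plan creates exactly the interface problem you yourself identify at the end. Second, and more seriously, your monotone-coupling claim for clusters---that making the $S_v$'s agree can only hurt, so the near-clique analysis of~\cite{APS} suffices---is precisely what the paper found does \emph{not} go through cleanly. The dense phase here splits into three regimes (governed by \eqref{R1} and \eqref{Ssize}), two of which have no analogue in~\cite{APS} and are described by the authors as ``the most interesting parts of what this work adds''; a single monotonicity reduction does not cover them.
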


We suspect that Theorem~\ref{CKK} extends to the ``correct'' version of Theorem~\ref{TAA}:

\begin{conj}\label{CKK2}
For fixed $\eps >0$,
if $|S_v|=d_v+1$ $\forall v\in V$ and $\ell = (1+\eps)\log n$,
then G is L-colorable w.h.p.
\end{conj}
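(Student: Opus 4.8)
\medskip\noindent
The plan is to mirror the proof of Theorem~\ref{CKK}, carrying out \emph{locally} --- relative to each vertex's own degree --- every step that was there done globally relative to the single palette $[D+1]$. Three reductions set this up. First, split $V$ into the \emph{low} vertices $V_0=\{v:d_v<\ell\}$ and the \emph{high} vertices $V_1=V\sm V_0$. For $v\in V_0$ we necessarily have $L_v=S_v$ with $|L_v|=d_v+1$, so colouring $V_0$ \emph{greedily and last} works: when such a $v$ is reached it has at most $d_v$ already-coloured neighbours and hence a free colour in $L_v$; thus $V_0$ never obstructs, whatever happened on $V_1$, and we may assume $d_v\ge\ell$ throughout. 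Second, note that the crux is intermediate-to-large degrees: $d_v\equiv D$ is exactly Theorem~\ref{CKK}, while a vertex with $d_v=O(\log n)$ lies only in almost-cliques of size $O(\log n)$, which are benign (their few colours are easily handled directly). Third, apply an almost-clique (Molloy--Reed) decomposition of $G[V_1]$ in its local, degree-dependent form: for a suitable $\xi=\xi(\eps)$ each $v\in V_1$ is either \emph{sparse} (its neighbourhood spans $\gO(\xi^2 d_v^2)$ non-edges) or lies in an \emph{almost-clique} $K$ with $|K|=(1\pm\xi)d_v$, $|N(v)\sm K|\le\xi|K|$, and $d_w=(1\pm O(\xi))|K|$ for all $w\in K$. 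The key gain is \emph{local uniformity}: within one $K$ the palettes $S_w$ all have a common size $\approx|K|$, which is what lets the clique-handling of Theorem~\ref{CKK} --- there run with cliques of size up to $D$ --- be rerun with $D$ replaced by $|K|$.

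\medskip\noindent
For the sparse vertices, repeat the R\"odl-nibble / iterated-random-colouring argument from the proof of Theorem~\ref{CKK}, with the number of rounds, the per-round activation probability, and the target retained availability all indexed by $d_v$ in place of $D$. What must be checked is that, after $v$'s neighbourhood has been (partially) coloured, a positive fraction of $S_v$ --- bounded below in terms of $\eps$ --- remains available to $v$; this is precisely what the $\gO(\xi^2 d_v^2)$ non-edges buy, forcing $\gO(\xi^2 d_v)$ repeated colours among $v$'s roughly $d_v$ neighbours and hence leaving $\gO(\xi^2|S_v|)$ colours of $S_v$ free, exactly as in the uniform case; a local-lemma and bounded-differences wrap-up then finishes the sparse part. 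For the almost-cliques, colour each $K$ by the Hall/SDR argument of Theorem~\ref{CKK} applied palette-locally: since all $|S_w|\approx|K|$, the random $\ell$-subsets $L_w$ support a proper colouring of the dense ($\le n$-vertex) graph $G[K]$ w.h.p.\ because $\ell=(1+\eps)\log n$ is, as there, just above the threshold for list-sparsifying such a graph; reserve a few colours of each $L_w$ in advance and use a final Hall step on the residual instance to absorb the vertices left uncoloured by the main pass.

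\medskip\noindent
The step I expect to be the main obstacle is the interface between almost-cliques of \emph{comparable, large} size and the colours they block for one another. If $v\in K$ has $t\le\xi|K|$ neighbours outside $K$, all coloured before $K$, then $K$'s SDR must be found in the reduced lists $L_w\sm B_w$ with $|B_w|$ possibly as large as $t$; when $|K|=\go(\log n)$ this can wipe out \emph{all} $\ell=(1+\eps)\log n$ colours of $L_w$, and --- unlike Theorem~\ref{CKK}, where the palette $[D+1]$ left room to absorb such losses --- here the surplus of $|S_w|$ over $w$'s within-$K$ degree is only $|N(w)\sm K|$, offering essentially no slack against the single $(1+\eps)$ factor in $\ell$. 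Overcoming this seems to require \emph{not} colouring cliques sequentially, but interleaving the clique passes with the sparse nibble so that a large clique never faces many pre-coloured outside neighbours, and choosing the reservoirs adaptively to dodge the colours already used nearby; making this accounting robust is, I believe, where the genuinely new work lies. A secondary, more bookkeeping, difficulty is that because the $S_v$ overlap arbitrarily there is no global notion of colour class, so every count in the proof of Theorem~\ref{CKK} that is phrased via $[D+1]$ must be recast in terms of the random vertex-local sets $L_v\sub S_v$ --- complicating, but not defeating, the concentration arguments.
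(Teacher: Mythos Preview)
This is Conjecture~\ref{CKK2} --- an open problem the paper explicitly \emph{does not} prove --- so there is no proof in the paper to compare against. Your submission is, correspondingly, not a proof but a research plan, and you acknowledge as much (``the step I expect to be the main obstacle\ldots'', ``making this accounting robust is \ldots where the genuinely new work lies'').

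As a plan the outline is natural, but two of your assessments miss the mark. Your ``secondary difficulty'' --- that the $S_v$ overlap arbitrarily with no global colour set --- is \emph{already} handled: Theorem~\ref{CKK} allows arbitrary $S_v$'s of common size $D+1$, and dealing with that is precisely the contribution of the present paper over \cite{APS}. And your ``main obstacle'' --- that pre-coloured external neighbours of $w\in K$ might ``wipe out all $\ell$ colours of $L_w$'' --- is overstated: $L_w$ is a \emph{random} $\ell$-subset of $S_w$, and since at most a $\xi$-fraction of $S_w$ is blocked externally, the loss to $L_w$ is w.h.p.\ only $O(\xi\ell)$, exactly the computation behind \eqref{NKK'}, which carries over verbatim. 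The genuine difficulty your plan does not engage with is that the dense-phase analysis of Sections~\ref{Clusters}--\ref{Matchings} --- already intricate, splitting into three regimes according to how the $S_v$ overlap within a cluster --- repeatedly uses that \emph{all} $|S_v|$ equal the single value $\DD$, and the paper secures this by first passing to a $D$-regular supergraph via Proposition~\ref{Preg}. That reduction is unavailable when $|S_v|=d_v+1$, and ``rerunning with $D$ replaced by $|K|$'' is far from mechanical once the $|S_w|$ for $w\in K$ genuinely differ. Until that is handled, what you have is a sketch of an approach to an open conjecture, not a proof.
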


The proof of Theorem~\ref{CKK} is previewed in Section~\ref{Sketch}, 
at the end of which we will be in 
a better position to say something intelligible about how the paper is organized.
As will be noted at various points below, the argument here---and some of the prose---has 
considerable overlap with 
\cite{APS}, to which we will appeal where possible to avoid duplication.
See also Section~10 of \cite{APS}
for discussion of a few related problems.

\section{Sketch}\label{Sketch}

Here we give a quick indication of the proof of Theorem~\ref{CKK}. 
%noting that while the strategies described are reasonably natural, 
%their implementation is delicate and not very straightforward (and that most of what's 
%sketched here will be easier to say comprehensibly in the more formal settings to come).
We will
assume $G$ is $D$-regular, as we may
by the next observation, which was \cite[Prop.~2.1]{APS}.
\begin{prop}\label{Preg}
Any $n$-vertex simple graph $G$ with $\gD_G\leq D$ is contained in a $D$-regular
simple graph with at most $n+D+2$ vertices.
\end{prop}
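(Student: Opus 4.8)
The plan is to first make $G$ nearly $D$-regular by adding edges inside $V=V(G)$, and then to absorb whatever degree deficiency remains into a small set of new vertices.

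\emph{Step 1: filling up inside $V$.} Write $r_v=D-d_G(v)\ge 0$ for the deficiency at $v$. I would pass to a simple graph $H'$ on vertex set $V$ that is maximal with respect to inclusion of edge sets subject to $G\sub H'$ and $\gD_{H'}\le D$. Let $P=\{v: d_{H'}(v)<D\}$ be the still-deficient vertices. If two vertices of $P$ were nonadjacent in $H'$, the edge between them could be added without violating $\gD_{H'}\le D$, contradicting maximality; so $P$ is a clique of $H'$, whence $|P|\le D+1$ and, for $v\in P$, the residual deficiency $\rho_v=D-d_{H'}(v)$ is at most $D-(|P|-1)$. In particular the total residual deficiency $\gS:=\sum_{v\in P}\rho_v$ is at most $|P|(D-|P|+1)\le (D+1)^2/4$, and (since each edge added in Step~1 lowered $\sum_v r_v$ by $2$) $\gS\equiv Dn\pmod 2$.

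\emph{Step 2: absorbing what's left.} I would adjoin a set $U$ of $q$ new vertices, with $q\in\{D+1,D+2\}$ chosen so that $qD-\gS$ is even (possible: if $D$ is even this holds for either value since $\gS$ is then even, and if $D$ is odd exactly one of $D+1,D+2$ has the needed parity). Then I would add a bipartite graph between $P$ and $U$ giving each $v\in P$ degree exactly $\rho_v$ while spreading the resulting degrees on $U$ as evenly as possible --- concretely, list each $v\in P$ with multiplicity $\rho_v$ in a word of length $\gS$ and route its $j$th letter to the $(j\bmod q)$th vertex of $U$; since $\rho_v\le D<q$ the copies of a fixed $v$ hit distinct vertices of $U$, so the bipartite graph is simple, and each $u\in U$ picks up $s_u\le\lceil\gS/q\rceil\le D$ of these edges, with the $s_u$ pairwise differing by at most $1$. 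Finally I would add, inside $U$, a simple graph in which $u$ has degree $D-s_u$; this target sequence is near-regular, has maximum $\le D\le q-1$, and has even sum $qD-\gS$, hence is graphical (one can even write it down as a circulant on $U$ with a matching added or deleted). The union of $G$, the Step~1 edges, the bipartite edges, and the edges inside $U$ is then $D$-regular, contains $G$, and has $n+q\le n+D+2$ vertices.

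I expect the only genuine obstacle to be the last point of Step~2: one must be sure the degrees imposed on $U$ really are realized by a simple graph. That is exactly why I keep the $s_u$ --- and hence the targets $D-s_u$ --- near-regular, and why I allow $q$ to be $D+1$ \emph{or} $D+2$: the spare vertex is precisely the slack needed to make $qD-\gS$ even. The clique bound in Step~1 and the simplicity of the bipartite stage are both immediate, so the rest of the argument is elementary.
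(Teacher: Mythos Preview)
Your argument is correct. Note that the paper does not actually prove this proposition: it simply quotes it from \cite{APS}, so there is no ``paper's own proof'' to compare against. Your two-step scheme --- saturate inside $V$ until the deficient set $P$ is a clique, then hang a bipartite piece onto a fresh set $U$ of size $D+1$ or $D+2$ and complete inside $U$ --- is a standard and clean way to get the bound $n+D+2$.

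A couple of minor remarks. First, your clique bound actually gives $|P|\le D$ (since $v\in P$ has $d_{H'}(v)\le D-1$), though $|P|\le D+1$ is all you use. Second, the only point that deserves one more line is the realizability of the near-regular sequence $(D-s_u)_{u\in U}$ on $U$. Your circulant-plus-matching sketch does work: if the larger target $e$ is even, the $e$-regular circulant on $U$ contains $C_q$, from which one deletes a matching of size $r/2$ (here $r$ is even since $qe$ and the sum $qe-r$ are both even); if $e$ is odd, start from the $(e-1)$-regular circulant and add a matching of size $b/2$ in its complement, which is itself a circulant of positive degree and hence (being vertex-transitive) has a matching missing at most one vertex. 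Either way $\lfloor q/2\rfloor$ is enough, so the sequence is realized.
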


\nin
So we are given $\gd>0$ and want to show that for 
any $D$-regular $G$ and $L=(L_v:v\in V)$
as in Theorem~\ref{CKK},
$G$ is $L$-colorable w.h.p.

As in \cite{ACK} and \cite{APS}, our argument begins with a partition of 
$G$ into a ``sparse'' part, $V^*$, and
small, dense ``clusters.''
The following statement from \cite{APS} is contained in \cite[Lemma 2.3]{ACK},
a variant of a decomposition from \cite{HSS}.
\begin{lemma}\label{LACK}
For any D, $\eps>0$, and $D$-regular graph $G$ on vertex set $V$, 
there is a partition
$
V=V^*\cup C_1\cdots\cup C_m
$
such that:

\nin
{\rm (a)}  
$~v\in V^* ~\Ra ~ |\{w\sim v: d(v,w) < (1-\eps)D\}| > \eps D$;

\nin
{\rm (b)} 
$\,$ for all $i\in [m]$:  

{\rm (i)} \, $|C_i|\in [(1-\eps)D, (1+6\eps)D]$, and

{\rm (ii)}\, for all $v\in C_i$, $|N_v\sm C_i|< 7\eps D$ and $|C_i\sm N_v|< 6\eps D$.
\end{lemma}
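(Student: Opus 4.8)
\bn
\emph{Proof idea.} This is the ``sparse--dense'' decomposition of \cite{ACK} (itself a variant of one in \cite{HSS}); I will only sketch the shape of the argument. We may assume $D$ is large in terms of $\eps$ (the only case needed below). Fix a parameter $\gb$, a suitable constant depending on $\eps$ (discussed at the end), and call $v\in V$ \emph{dense} if $|\{w\sim v:\,d(v,w)<(1-\gb)D\}|\le\gb D$ and \emph{sparse} otherwise. I would take $V^*$ to be the set of vertices left unclustered by the procedure below; since every vertex violating the conclusion of (a) (for $\eps$) will be shown to lie in a cluster, (a) will hold. Two elementary facts carry everything. First (``near twins''): if $v$ is dense and $w\sim v$ with $d(v,w)\ge(1-\gb)D$ then, as $|N_v|=|N_w|=D$ and $|N_v\cap N_w|\ge(1-\gb)D$, we get $|N_v\sm N_w|,|N_w\sm N_v|\le\gb D$, so $|N_v\triangle N_w|\le2\gb D$. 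Second, for dense $v$ set $K_v:=\{v\}\cup\{w\sim v:\,d(v,w)\ge(1-\gb)D\}$; then $(1-\gb)D\le|K_v|\le D+1$, and every $x\in K_v$ has at most $\gb D$ non-neighbours inside $K_v$ (an $x\neq v$ in $K_v$ has $d(v,x)\ge(1-\gb)D$, so misses at most $\gb D$ vertices of $N_v\supseteq K_v\sm\{v\}$, and $x\sim v$). So $K_v$ is a ``robust almost-clique'' of size $\approx D$ containing $v$.

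Next I would let the clusters be the equivalence classes, on the set of dense vertices, of the relation ``$K_u\cap K_v\neq\0$''. That this is an equivalence relation comes from a \emph{dichotomy}: for dense $u,v$, either $K_u\cap K_v=\0$ or $|K_u\cap K_v|\ge(1-4\gb)D$. Indeed, any $x\in K_u\cap K_v$ misses at most $\gb D$ of $K_u$ and at most $\gb D$ of $K_v$, so each of $K_u,K_v$ meets $N_x$ in at least $(1-2\gb)D$ vertices; since $|N_x|=D$ these two subsets of $N_x$ overlap in at least $(1-4\gb)D$ vertices, and this overlap lies in $K_u\cap K_v$. Transitivity then follows from the dichotomy together with $|K_v|\le D+1$ (if $K_u\cap K_v$ and $K_w\cap K_v$ each have $\ge(1-4\gb)D$ vertices, they meet in $\ge(1-8\gb)D-1>0$ of them, for $\gb$ small and $D$ large, so $K_u\cap K_w\neq\0$), and reflexivity ($v\in K_v$) and symmetry are clear. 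Call the classes $C_1\dots C_m$ and put every remaining vertex into $V^*$. Since every dense $v$ lies in $K_v$ and hence in some $C_i$, $V^*$ consists only of sparse vertices and $V=V^*\cup C_1\cup\cdots\cup C_m$ is a partition.

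There remains (b), which is the bulk of the argument. Fix $C_i$ and $v\in C_i$. For $u\in C_i$ we have $|K_u\cap K_v|\ge(1-4\gb)D$ and (since $u$ misses $\le\gb D$ of $K_u$) $|K_u\sm N_u|\le\gb D+1$, so $|N_u\cap K_v|\ge(1-5\gb)D-1$; hence $C_i\sub\{u:\,|N_u\cap K_v|\ge(1-6\gb)D\}$, a set of size at most $\frac{\sum_{x\in K_v}|N_x|}{(1-6\gb)D}\le\frac{(D+1)D}{(1-6\gb)D}\le(1+7\gb)D$ for $D$ large --- the upper bound in (b)(i). Since $K_v\sub\{v\}\cup N_v$ and each $x\in K_v$ misses $\le\gb D$ of $K_v$, transporting the almost-clique structure of $K_v$ across $C_i$ gives $|C_i\sm N_v|<6\eps D$. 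For the lower bound $|C_i|\ge(1-\eps)D$ (whence also $|N_v\sm C_i|<7\eps D$), observe that any \emph{dense} $w\in K_v$ has $w\in K_w\cap K_v$, hence $w\equiv v$ and $w\in C_i$; so one needs that all but $O(\gb D)$ of the $\ge(1-\gb)D$ vertices of $K_v$ are themselves dense. This is the one genuinely delicate point, and the obstacle I would expect to demand real care: the near-twin mechanism shows a vertex of $K_v$ to be $O(\gb)$-dense but not obviously $\gb$-dense --- each near-twin step degrades the threshold by a constant factor --- so the density notion used to \emph{build} the clusters and the one used to \emph{bound their size} cannot literally coincide, and in particular $V^*$ cannot be taken to be the sparse vertices for a single threshold. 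Reconciling this --- running the clustering with $\gb$ a small enough fixed fraction of $\eps$ and then showing separately that every vertex violating the conclusion of (a) for $\eps$ is nonetheless absorbed into some $C_i$ --- is exactly what the liberal constants $6\eps,7\eps$ in the statement are there to accommodate; it is carried out in detail in \cite{ACK} (following \cite{HSS}), and with those constants absorbing the $O(\gb)$ losses above, (b) follows.
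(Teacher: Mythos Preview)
The paper does not prove this lemma at all: it is quoted, with attribution, as ``contained in \cite[Lemma~2.3]{ACK}, a variant of a decomposition from \cite{HSS},'' and no argument is given. So there is no in-paper proof to compare against; the only question is whether your sketch is a faithful outline of the cited result.

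In broad strokes it is. The almost-clique sets $K_v$, the overlap dichotomy (either $K_u\cap K_v=\0$ or $|K_u\cap K_v|\ge(1-O(\gb))D$), and the resulting equivalence relation are exactly the mechanism of the HSS/ACK decomposition, and your derivations of the upper bound on $|C_i|$ and of the internal non-degree bound are correct. You are also right to single out the lower bound $|C_i|\ge(1-\eps)D$ as the place where the argument needs real care, and right about \emph{why}: the near-twin step only shows that a member of $K_v$ is $O(\gb)$-dense rather than $\gb$-dense, so with a single threshold one cannot close the loop and conclude that (almost all of) $K_v$ lies in $C_i$. One small muddle: as written, your construction first declares the $C_i$'s to be the equivalence classes on the $\gb$-dense vertices and sets $V^*$ equal to the $\gb$-sparse vertices, and only afterwards observes that this cannot work with a single threshold---so the ``procedure below'' that the opening paragraph points to is not quite the one that is eventually meant. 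The fix you indicate (run the clustering at a threshold $\gb$ that is a small fixed fraction of $\eps$, and then argue separately that any vertex violating (a) at level $\eps$ is absorbed into some $C_i$) is the right idea and is what \cite{ACK} does; since you explicitly defer that step to \cite{ACK}, and since the present paper likewise defers the whole lemma to \cite{ACK}, your treatment is consistent with the paper's.
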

\nin
(The specific constants are unimportant.)
An easy consequence of (a)---and all we will use it for---is
\beq{generic'} 
|\ov{G}[N_v]| > (\eps^2/2)D^2\,\,\,\,\forall v\in V^*.
\enq

We will use Lemma~\ref{LACK} with $\eps$ a small constant (depending on $\gd$),
to which we will not assign a specific value; constraints on what that value might be
will appear mostly in the ``dense'' phase of the argument, beginning in Section~\ref{Clusters}
(see ``Note on parameters'' near the beginning of that section).

\mn

As in \cite{APS} (and \cite{ACK}) we color in two stages, the first (``sparse'') coloring $V^*$ and the second 
(``dense'') extending to the $C_i$'s.

\nin
\textbf{Sparse phase.}  Our aim here---as in \cite{APS}, but no longer \cite{ACK}---is 
to reduce to the following result of 
Reed and Sudakov \cite{RS}.
For  $v\in V$ and $\gc\in \gG$, define the \emph{color degree}
\[
d_\gc(v):=|\{w\sim v:\gc\in L_w\}|.
\]
\begin{thm}\label{TRS}
For any $\vr>0$ there is a $t_\vr$ such that
if $t> t_\vr$, 
all color degrees are at most $t$, and all list sizes are at least $(1+\vr)t$, 
then $G$ is $L$-colorable.
\end{thm}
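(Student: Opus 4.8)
I would prove this (a theorem of Reed and Sudakov \cite{RS}) by the semi-random (R\"odl nibble) method, capped off by Haxell's independent transversal theorem. Since shrinking lists only makes matters harder, I would first assume every $|L_v|$ equals exactly $\ell:=\lceil(1+\vr)t\rceil=\Theta(t)$, which is legitimate because truncating the lists preserves the bound that all color degrees are at most $t$. The value of this normalization is that now, for every $v$ and $c$, only $O(\ell t)=O(t^2)$ neighbors $w$ of $v$ are \emph{relevant} (have $L_v\cap L_w\ne\emptyset$, equivalently can ever receive the same color as $v$) --- a bound in terms of $t$ alone, not of the possibly enormous degree of $G$. The plan is then to run a few rounds of the nibble to inflate the ratio (list size)/(maximum color degree) from $1+\vr$ up to at least $2$, and to finish with Haxell.

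For the nibble, maintain a proper partial coloring, write $U$ for the uncolored set, and for $v\in U$ let $L_v$ denote $v$'s current list ($v$'s original list with the colors of its colored neighbors removed) and, for $c\in L_v$, let $d_c(v)=|\{w\in U:w\sim v,\ c\in L_w\}|$ be $v$'s current color degree. Track an upper bound $\tau$ on all color degrees and a lower bound $L$ on all list sizes, maintaining the invariant $L/\tau\ge s$; initially $\tau\le t$, $L\ge(1+\vr)t$, $s=1+\vr$. In a round, each $v\in U$ independently activates with probability $\beta$ (a small constant depending only on $\vr$) and, if activated, picks a uniformly random tentative color $\bgx_v\in L_v$; then $v$ is permanently colored $\bgx_v$ unless some neighbor activated with the same color, after which each surviving list loses its newly used colors. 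A first-moment computation gives, after the round, $\E[\ell_v]\ge\ell_v(1-\beta/s)$ (a color of $L_v$ is lost only if one of its $\le\tau$ owners in $N(v)$ keeps it, which has probability $\lesssim\beta/t$ apiece) and $\E[d_c(v)]\le d_c(v)(1-\beta(1-\beta))$ (each contributing neighbor becomes colored, hence leaves, with probability $\gtrsim\beta(1-\beta)$). Hence the new slack is $\gtrsim(s-\beta)/(1-\beta+\beta^2)$, which exceeds $s$ once $s>1/(1-\beta)$; choosing $\beta<\vr/(1+\vr)$ this holds from the start and $s$ grows by $\Omega_\vr(1)$ per round, so after $O_\vr(1)$ rounds the invariant reads $L/\tau\ge 2$, with $\tau$ and $L$ remaining $\Theta_\vr(t)$ throughout.

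To make this rigorous one needs, in each round, that \emph{every} $v\in U$ has its new $\ell_v$ and every new $d_c(v)$ within $o(\beta t)$ of the expectations above. Each such quantity is determined by the activations and colors chosen at the $O(t^2)$ relevant neighbors of $v$ and at \emph{their} relevant neighbors; exposing these vertex by vertex produces a martingale whose increments are usually tiny and whose total conditional variance is $O(\beta t)$ (the rare large increments being confined to an event of probability $t^{-\omega(1)}$), so Freedman's inequality gives the required deviation with failure probability $t^{-\omega(1)}$. Each such bad event is mutually independent of all but $t^{O(1)}$ of the others --- this is exactly what the initial truncation to lists of size $\Theta(t)$ buys --- so for $t>t_\vr$ the Lov\'asz Local Lemma shows that with positive probability no bad event occurs, i.e.\ the invariant survives the round. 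Iterating the $O_\vr(1)$ rounds yields an execution ending with all color degrees $\le\tau'=\Theta_\vr(t)$ and all lists of size $\ge 2\tau'$.

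To finish, form the graph on vertex set $\{(v,c):v\in U,\ c\in L_v\}$ with parts $V_v=\{(v,c):c\in L_v\}$ and an edge between $(v,c)$ and $(w,c)$ whenever $v\sim w$; a proper $L$-coloring of $G[U]$ is precisely an independent transversal, the maximum degree here is $\max_{v,c}d_c(v)\le\tau'$, each part has size $\ge 2\tau'$, and Haxell's theorem (parts of size at least twice the maximum degree force an independent transversal) produces such a coloring, which together with the already-colored vertices finishes $G$. The crux of the whole argument is the concentration step: since $G$ may have arbitrarily large degree and only the color degrees are bounded, one cannot union-bound over the (unboundedly many) vertices, and the relevant deviations are not controlled by naive bounded-difference or Talagrand estimates, because the number of random choices on which each tracked quantity depends is far too large; what rescues the argument is that the natural martingale accumulates only $O(\beta t)$ of variance, after which a \emph{local} use of the Local Lemma --- made possible precisely by the preliminary truncation of the lists --- pins down all quantities simultaneously. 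The remaining pieces (the first-moment calculations, the choices of $\beta$ and $t_\vr$, and the Haxell endgame) are routine.
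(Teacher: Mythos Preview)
The paper does not prove Theorem~\ref{TRS}; it is quoted as a result of Reed and Sudakov \cite{RS} and used as a black box, so there is no ``paper's own proof'' to compare against.

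That said, your sketch is a faithful outline of the Reed--Sudakov argument (with the Haxell endgame, a now-standard way to finish). The truncation to lists of size $\Theta(t)$ is exactly what bounds the relevant-neighbor count by $t^{O(1)}$ and makes the Local Lemma applicable; the first-moment calculations and the choice $\beta<\vr/(1+\vr)$ correctly show that the slack $L/\tau$ grows by $\Omega_\vr(1)$ per round; and once $L\ge 2\tau$, Haxell's theorem delivers the independent transversal. The only thin spot is the concentration step: the event that a neighbor $w$ \emph{retains} its tentative color depends on the choices at $w$'s neighbors as well, so the martingale exposure and the variance bound $O(\beta t)$ need care, and the standard treatments typically insert equalizing coin flips and use Talagrand-type inequalities rather than Freedman. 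This is a matter of execution rather than of approach; your outline would go through with those devices.
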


\nin

We will use Theorem~\ref{TRS} with lists that are subsets of the $L_v$'s of Theorem~\ref{CKK}.
Our initial plan here (which doesn't always work) is as follows.
We first tentatively assign single random colors to the vertices 
(the color assigned to $v$ being a first installment on $L_v$), retaining these 
colors at the properly colored vertices
(those assigned colors not assigned to their neighbors) and discarding the rest. 
We then choose the balances of the lists and, for $v$ not colored in the first stage,
confine ourselves to the surviving colors in $L_v$ (those not already used at neighbors of $v$).
This is supposed to work as follows.

For a given $v$ the first stage above should yield:

\nin
(a)  a positive fraction (essentially $e^{-1}$) of the neighbors of $v$ properly colored;

\nin
(b)  the number of colors \emph{from} $S_v$ assigned
to the neighbors in (a) is significantly less 
than the number of these neighbors, either because of
significant duplication among the colors 
(a benefit of being in the sparse phase),
or because many of the chosen colors are outside $S_v$.

\nin
It is not hard to see that if (a) and (b) (suitably quantified) hold for all $v$, then 
the \emph{natural} values (with respect to still-allowed colors)
of the list sizes and color degrees support use of Theorem~\ref{TRS}.

But of course we need to say that w.h.p.\ all 
(or a strong enough \emph{most}) of these
desirable behaviors 
%(i.e.\ (a), (b) and the ``natural'' values above) 
happen \emph{simultaneously}.
This turns out to be true (or true \emph{enough})
for $n$ less than some quite large function of $D$,
but---surprisingly---not all the way to the maximum $n$, roughly $e^D$, for which $\ell < D+1$.
Discussion of the resolution of this issue is postponed to Section~\ref{Sparse}.

\nin

\nin
\textbf{Dense phase.}
We process the $C_i$'s one at a time, in each case assuming an arbitrary coloring
(even an arbitrary \emph{labeling} would be okay here) of $V^*$ and the earlier clusters.

With $C_i=C$, the object of interest here is the natural bigraph on $C\cup \gG$ 
that connects each $v\in C$ 
to the colors in $S_v$ not yet assigned to neighbors of $v$ outside of $C$,
the obvious goal being to show that the random restriction to edges $(v,\gc)$ with 
$\gc\in L_v$ is likely to admit a $C$-perfect matching.  
(E.g.\ if $G$ consists of disjoint $K_{D+1}$'s then we are asking for likely existence of
such matchings in $n/(D+1)$ copies of $K_{D+1,D+1}$, and standard isolated vertex 
considerations---though the random subgraphs here are not quite the usual ones---show that the
$\ell$ of Theorem~\ref{CKK} is asymptotically best possible.)
But in general we will have situations where such matchings clearly (or not so clearly) 
cannot---or with not so small probability \emph{may} not---exist;
and even when they \emph{should} exist, showing that they (probably) do has---to date---proved
surprisingly far from straightforward.

We will not try to sketch this, but give a quick, vague hint.
There are eventually three main ``regimes,'' depending (\emph{very} roughly)
on how the $S_v$'s 
overlap.  
(Which regime we are in depends on whether \eqref{R1} holds, and if 
it does not then whether \eqref{Ssize} does.)
The most difficult of these, which generalizes the main regime of \cite{APS},
is the one for which we don't expect full matchings;  e.g., most naively, when $|C|> D+1$ 
and the $S_v$'s are all the same.   
Here the natural (but in practice delicate) plan is to
precolor enough of $C$ to reach a point where a good matching of what 
remains \emph{is} likely.

As suggested above, the other two regimes, where we show a good matching \emph{is} likely,
look easy but don't seem to be.
These have no counterparts in \cite{APS} and seem to be the most interesting 
parts of what this work adds to what we knew earlier.

\mn

After reviewing a few standard tools in Section~\ref{Prelim},
we give the basic argument for the sparse phase in Section~\ref{Sparse}
postponing the proofs of Lemmas~\ref{LPh1a} and \ref{LC0}, 
which carry most of the load.
The martingale concentration machinery needed for these is reviewed in Section~\ref{Martingales},
and the lemmas themselves are proved in Section~\ref{Concentration}.
The dense phase is treated in Sections~\ref{Clusters}-\ref{Matchings}, again
with the main line of argument in Section~\ref{Clusters}, followed by proofs
of the supporting 
Lemmas~\ref{LMg'A} and \ref{Lsucc} (in reverse order) 
in Sections~\ref{Matchings} and \ref{Process}.
(The sparse and dense parts of the argument can be read independently.)

\nin
\textbf{Usage}

The vertex set $V=V(G)$ is of size $n$, and $\gG = \cup_{v \in V} S_v$.
Any $\gc$, perhaps subscripted, is by default a member of $\gG$.
In graphs, $\nabla(X,Y)$ is the set of edges joining the disjoint vertex sets $X,Y$
and $\nabla(X)=\nabla(X,V\sm X)$; $N(\cdot)$ is neighborhood
and $N(S)=\cup_{v\in S}N(v)$; and $H[X]$ is the subgraph of $H$ induced by $X$.

Asymptotics:  we use $o(\cdot)$, $\go(\cdot)$, $\sim$, etc.\ in the usual ways, and often write 
$a\less b$ for $a<(1+o(1))b$  (all as $D\ra \infty$).
Our discussion, especially in the dense phase, will involve a number of constants,
but the implied constants in $O(\cdot)$, $\gO(\cdot)$, $\Theta(\cdot)$
do not depend on these.

We will always assume $D$ is large enough to support
our assertions and, following a common abuse, usually pretend 
large numbers are integers. In what follows, we will more often be interested in $D+1$ (the common size of
the $S_v$'s) than in $D$ itself;
so to avoid cluttering the paper with $(D+1)$'s we proceed as follows.
For the sparse phase, the difference is irrelevant 
(the argument doesn't really
change if we shrink $S_v$'s by 1), 
and we ignore it, using $D$ for both $D$ and $D+1$
(in these cases even somewhat looser 
interpretations of $D$ wouldn't change anything);
but when we come to the dense phase, the difference does sometimes matter, 
so in Sections~\ref{Clusters}-\ref{Matchings} we will---but only when it \emph{does} matter---use
$\DD$ for $D+1$.

We will sometimes use square brackets to indicate quick comments and easy justifications.

\nin

\section{Preliminaries}\label{Prelim}

We superfluously recall the Lov\'asz Local Lemma (\cite{ErLov} or e.g.\
\cite[Ch.\ 5]{AS}):
\begin{lemma}\label{LLL}
Suppose $A_1,\ldots, A_m$ are events in a probability space with
$\pr(A_i)\leq p$ $\forall i$,
and $\frak{G}$ is a graph on $[m]$
with $A_i$ independent of $\{A_j: j\not\sim_{\frak{G}} i\}$ ($\forall i$). 
If $ep\cdot (\gD_{\frak{G}} +1)<1$, then
$
\pr(\cap \bar{A_i}) >0.
$
\end{lemma}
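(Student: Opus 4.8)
The plan is to derive the statement from the general (asymmetric) Local Lemma, proving the latter first and then specializing. So the first step will be to establish: \emph{if $A_1\dots A_m$ are events and $x_1\dots x_m\in[0,1)$ satisfy $\pr(A_i)\le x_i\prod_{j\sim_{\frak{G}}i}(1-x_j)$ for every $i$, then $\pr(\cap_i\bar A_i)\ge\prod_i(1-x_i)>0$.} Granting this, I would finish by taking $x_i=1/(\gD_{\frak{G}}+1)$ for all $i$: since $(1-\tfrac1{\gD_{\frak{G}}+1})^{\gD_{\frak{G}}}\ge 1/e$ and vertex $i$ has at most $\gD_{\frak{G}}$ neighbors in $\frak{G}$,
\[
x_i\prod_{j\sim_{\frak{G}}i}(1-x_j)\ \ge\ \frac1{\gD_{\frak{G}}+1}\Bigl(1-\frac1{\gD_{\frak{G}}+1}\Bigr)^{\gD_{\frak{G}}}\ \ge\ \frac1{e(\gD_{\frak{G}}+1)}\ >\ p\ \ge\ \pr(A_i),
\]
so the hypothesis of the asymmetric form holds and $\pr(\cap\bar A_i)>0$, which is the assertion.

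The heart of the asymmetric form is the claim that for every $i\in[m]$ and every $S\sub[m]\sm\{i\}$,
\[
\pr\Bigl(A_i\ \Big|\ \bigcap_{j\in S}\bar A_j\Bigr)\ \le\ x_i ,
\]
proved by induction on $|S|$ (the induction simultaneously keeping all the conditioning events of positive probability, via $x_j<1$, so that the conditional probabilities are defined). The base case $|S|=0$ is the hypothesis. For $|S|\ge1$, split $S=S_1\cup S_2$ with $S_1=\{j\in S:j\sim_{\frak{G}}i\}$ and $S_2=S\sm S_1$, and write
\[
\pr\Bigl(A_i\ \Big|\ \bigcap_{j\in S}\bar A_j\Bigr)\ =\ \frac{\pr\bigl(A_i\cap\bigcap_{j\in S_1}\bar A_j\ \big|\ \bigcap_{k\in S_2}\bar A_k\bigr)}{\pr\bigl(\bigcap_{j\in S_1}\bar A_j\ \big|\ \bigcap_{k\in S_2}\bar A_k\bigr)} .
\]
The numerator is at most $\pr(A_i\mid\bigcap_{k\in S_2}\bar A_k)=\pr(A_i)\le x_i\prod_{j\sim_{\frak{G}}i}(1-x_j)$, the middle equality because $A_i$ is independent of $\{A_k:k\in S_2\}$ (all these $k$ are non-neighbors of $i$). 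The denominator, expanded by the chain rule and bounded term by term via the inductive hypothesis (each term conditions on strictly fewer than $|S|$ of the $\bar A$'s), is at least $\prod_{j\in S_1}(1-x_j)\ge\prod_{j\sim_{\frak{G}}i}(1-x_j)$. Dividing gives the claim. Finally, $\pr(\cap_i\bar A_i)=\prod_{i=1}^m\pr(\bar A_i\mid\bigcap_{j<i}\bar A_j)=\prod_{i=1}^m\bigl(1-\pr(A_i\mid\bigcap_{j<i}\bar A_j)\bigr)\ge\prod_{i=1}^m(1-x_i)>0$.

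This is the standard Erd\H{o}s--Lov\'asz argument, so there is no genuine obstacle; the only thing to watch is the bookkeeping in the induction---checking that conditioning events remain of positive probability (so the conditional probabilities make sense) and that each application of the inductive hypothesis in the denominator bound is to a set strictly smaller than $S$, which holds because $i\notin S$ and the chain rule peels off one factor at a time. One could equally run the same induction directly with the constant weights $x_i\equiv1/(\gD_{\frak{G}}+1)$, bypassing the asymmetric statement, but packaging it through the asymmetric form seems cleanest.
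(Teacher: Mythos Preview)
Your proof is correct and is the standard Erd\H{o}s--Lov\'asz argument. The paper, however, does not prove this lemma at all: it is stated in the preliminaries as a known result, with citations to \cite{ErLov} and \cite[Ch.~5]{AS}, and is used as a black box. So there is nothing to compare---your proposal supplies a proof where the paper simply invokes the literature.
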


Most of what we do below relies on various
concentration assertions, beginning with the following
standard
``Chernoff"
bounds.
Recall that a random variable $\xi$ is \emph{hypergeometric} if, for some
$s,a$ and $k$,
it is distributed as $|X\cap A|$, where $A$ is a fixed $a$-subset
of the $s$-set $S$ and $X$ is uniform from $\C{S}{k}$.
\begin{thm}
\label{T2.1}
If $\xi $ is binomial or hypergeometric with  $\E \xi  = \mu $, then for $t \geq 0$,
\begin{align}
\pr(\xi  \geq \mu + t) &\leq
\exp\left[-\mu\varphi(t/\mu)\right] \leq
\exp\left[-t^2/(2(\mu+t/3))\right], \label{eq:ChernoffUpper}\\
\pr(\xi  \leq \mu - t) &\leq
\exp[-\mu\varphi(-t/\mu)] \leq
\exp[-t^2/(2\mu)],\label{eq:ChernoffLower}
\end{align}
where $\varphi(x) = (1+x)\log(1+x)-x$
for $ x > -1$ and $ \varphi(-1)=1$.
\end{thm}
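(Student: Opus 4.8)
The plan is the textbook exponential-moment (Chernoff) argument for the binomial tails, combined with Hoeffding's comparison theorem to reduce the hypergeometric case to the binomial one, and finally two elementary one-variable estimates to pass from the sharp bounds to the cruder quadratic ones.

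First I would handle the binomial upper tail. Write $\xi=\sum_{i=1}^{s}Y_i$ with the $Y_i$ i.i.d.\ Bernoulli$(p)$, so that $\mu=sp$. For any $\lambda>0$, Markov's inequality applied to $e^{\lambda\xi}$ gives
\[
\pr(\xi\geq\mu+t)\;\leq\; e^{-\lambda(\mu+t)}\,\E e^{\lambda\xi}\;=\;e^{-\lambda(\mu+t)}(1-p+pe^{\lambda})^{s}\;\leq\; e^{-\lambda(\mu+t)}\exp[\mu(e^{\lambda}-1)],
\]
using $1+z\leq e^{z}$. Choosing $\lambda=\log(1+t/\mu)$ makes the exponent equal to $-(\mu+t)\log(1+t/\mu)+t=-\mu\,\varphi(t/\mu)$, which is the first inequality in \eqref{eq:ChernoffUpper}. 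The lower tail is entirely analogous: bound $\pr(\xi\leq\mu-t)\leq e^{\lambda(\mu-t)}\E e^{-\lambda\xi}\leq e^{\lambda(\mu-t)}\exp[\mu(e^{-\lambda}-1)]$ for $\lambda>0$ and take $\lambda=-\log(1-t/\mu)$ (valid for $0<t<\mu$), which yields the exponent $-\mu\,\varphi(-t/\mu)$ of \eqref{eq:ChernoffLower}; the boundary case $t=\mu$ is exactly the convention $\varphi(-1)=1$, and $t>\mu$ is vacuous since $\xi\geq0$.

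Next the hypergeometric case. Here I would invoke Hoeffding's theorem: if $\xi=|X\cap A|$ is hypergeometric as in the statement ($|S|=s$, $|A|=a$, $X$ uniform from $\C{S}{k}$) and $\eta\sim\mathrm{Bin}(k,a/s)$ is the binomial with the same mean $\mu=ka/s$, then $\E f(\xi)\leq\E f(\eta)$ for every convex $f:\mathbb{R}\to\mathbb{R}$. Taking $f(x)=e^{\lambda x}$ (convex for each fixed real $\lambda$) gives $\E e^{\lambda\xi}\leq\E e^{\lambda\eta}$ for all $\lambda$, so both Markov computations above go through verbatim with $\xi$ hypergeometric, and the same two bounds result. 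This moment-generating-function domination is the only step that is more than mechanical: a self-contained proof writes a uniform $k$-subset as an average, over orderings of $S$, of $k$-fold products of coordinate indicators and applies Jensen (equivalently, one uses the standard martingale/negative-association route). I expect this to be the main obstacle, such as it is; everything else is routine.

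Finally, the second (cruder) inequalities in \eqref{eq:ChernoffUpper} and \eqref{eq:ChernoffLower} follow from the sharp ones via the elementary facts that $\varphi(x)\geq x^{2}/(2(1+x/3))$ for all $x\geq0$ and $\varphi(-x)\geq x^{2}/2$ for $0\leq x\leq1$, after substituting $x=t/\mu$. The second is immediate: $g(x):=\varphi(-x)-x^{2}/2$ has $g(0)=0$ and $g'(x)=-\log(1-x)-x=\sum_{j\geq2}x^{j}/j\geq0$ on $[0,1)$, with the endpoint $x=1$ covered by $\varphi(-1)=1\geq 1/2$. The first is the familiar Bernstein estimate and follows from a one-variable calculus check — both sides vanish through order $x^{3}$ at $0$, and after clearing the positive denominator one is reduced to verifying that a certain explicit function of $x\geq0$ is nonnegative; I would not belabor it.
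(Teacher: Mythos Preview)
Your proposal is correct and follows the standard textbook route. The paper itself does not prove this theorem at all: it simply cites \cite[Theorems 2.1 and 2.10]{JLR} and moves on, so there is nothing to compare your argument against beyond noting that what you have sketched is essentially the proof one finds in that reference (the exponential-moment argument for the binomial, Hoeffding's convex-order comparison to handle the hypergeometric, and the elementary estimates $\varphi(x)\geq x^2/(2(1+x/3))$ and $\varphi(-x)\geq x^2/2$ for the cruder bounds).
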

\nin
(See e.g.\ \cite[Theorems 2.1 and 2.10]{JLR}.)
For larger deviations the following consequence of the finer bound in \eqref{eq:ChernoffUpper}
is helpful.  
\begin{thm}
\label{Cher'}
For $\xi $ and $\mu$ as in Theorem~\ref{T2.1} and any $K$,
\beq{largedev}
\pr(\xi  > K\mu) < \exp[-K\mu \log (K/e)].
\enq
\end{thm}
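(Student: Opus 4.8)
The plan is to read \eqref{largedev} straight off the finer exponent in \eqref{eq:ChernoffUpper}, namely $\pr(\xi\ge\mu+t)\le\exp[-\mu\varphi(t/\mu)]$. First I would dispose of the trivial range: if $K\le e$ then $\log(K/e)\le0$, so the right-hand side of \eqref{largedev} is at least $1$ and there is nothing to prove; and if $\mu=0$ the statement is vacuous. So I may assume $\mu>0$ and $K>e$, and in particular $t:=(K-1)\mu>0$, which legitimizes applying \eqref{eq:ChernoffUpper} with this $t$.

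With this choice, $\mu+t=K\mu$, so \eqref{eq:ChernoffUpper} gives
\[
\pr(\xi>K\mu)\le\pr(\xi\ge K\mu)\le\exp[-\mu\varphi(K-1)].
\]
It then remains only to check that $\mu\varphi(K-1)\ge K\mu\log(K/e)$. Using $\varphi(x)=(1+x)\log(1+x)-x$ with $x=K-1$ (so $1+x=K$),
\[
\mu\varphi(K-1)=\mu\bigl(K\log K-(K-1)\bigr)=K\mu\log K-K\mu+\mu=K\mu\log(K/e)+\mu\ge K\mu\log(K/e),
\]
the last step just using $\mu\ge0$. Combining the two displays yields \eqref{largedev}.

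Honestly, there is no real obstacle here: the whole content is this one-line comparison of exponents, and the statement is recorded only because the weaker quadratic bound in \eqref{eq:ChernoffUpper} is useless for deviations of order $K\mu$ with $K$ growing. The single thing one must not overlook is the trivial regime $K\le e$, where the asserted inequality holds simply because its right-hand side is at least $1$.
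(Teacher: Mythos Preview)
Your proof is correct and is exactly what the paper has in mind: the paper does not spell out a proof but simply labels \eqref{largedev} a ``consequence of the finer bound in \eqref{eq:ChernoffUpper},'' and your one-line computation $\mu\varphi(K-1)=K\mu\log(K/e)+\mu\ge K\mu\log(K/e)$ is precisely that consequence. Your handling of the trivial range $K\le e$ is also appropriate.
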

\nin

We also need the simplest case of the Janson Inequalities
(see \cite{Janson} or \cite{JLR}, in particular
\cite[Theorem 2.18(ii)]{JLR} for the case in question).  
Suppose $A_1\dots A_m$ are subsets of a
finite set $S$; let $E_i$ be the event $\{S_p\supseteq A_i\}$
(with $S_p$ the usual ``$p$-random'' subset of $S$);
and set $\mu =\sum\pr(E_j)$ and
\[   
\ov{\gD} = \sum\sum\{\pr(E_iE_j): A_i\cap A_j\neq\0\}.
\]   
(Note this includes
diagonal terms.)

\begin{thm}\label{TJanson}
With notation as above, 
$\,\,\pr(\cap \ov{E}_i) \leq \exp[-\mu^2/\ov{\gD}]$.
\end{thm}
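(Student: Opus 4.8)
This is Janson's inequality in its ``second'' (clustered) form; I would prove it by the classical argument, whose two pillars are the Harris (FKG) inequality for the product measure governing $S_p$ and a telescoping of conditional probabilities along an arbitrary ordering of the index set. Nothing below departs from the treatment in the already-cited \cite[Theorem~2.18]{JLR}; I sketch it for completeness.

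Fix an ordering $1\dots m$ and write $\pr(\bigcap_{i=1}^{m}\ov{E}_i)=\prod_{i=1}^{m}(1-\phi_i)$ with $\phi_i:=\pr(E_i\mid\bigcap_{j<i}\ov{E}_j)$, so it suffices to lower bound each $\phi_i$. For fixed $i$ split $\{j<i\}$ into the near set $\mathcal N=\{j<i:A_j\cap A_i\neq\0\}$ and the far set $\mathcal F=\{j<i:A_j\cap A_i=\0\}$, and set $C=\bigcap_{j\in\mathcal N}\ov{E}_j$ and $F=\bigcap_{j\in\mathcal F}\ov{E}_j$. Then
\[
\phi_i=\pr(E_i\mid C\cap F)\geq\pr(E_i\cap C\mid F)=\pr(E_i\mid F)\pr(C\mid E_i\cap F)=\pr(E_i)\pr(C\mid E_i\cap F),
\]
the inequality because $\pr(C\cap F)\leq\pr(F)$ and the last equality because $E_i$ and $F$ depend on disjoint coordinate sets. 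Conditioning on $E_i$ merely freezes the coordinates of $A_i$, leaving a product measure under which $F$ is decreasing and each $E_j$ ($j\in\mathcal N$) is increasing; so two applications of FKG give $\pr(C\mid E_i\cap F)\geq\prod_{j\in\mathcal N}\pr(\ov{E}_j\mid E_i)=\prod_{j\in\mathcal N}(1-\pr(E_iE_j)/\pr(E_i))$, i.e.\ the basic estimate
\[
\phi_i\geq\gl_i:=\pr(E_i)\prod_{j\in\mathcal N}\Bigl(1-\tfrac{\pr(E_iE_j)}{\pr(E_i)}\Bigr).
\]

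It remains to turn $\pr(\bigcap_i\ov{E}_i)\leq\prod_i(1-\gl_i)$ into the claimed bound. Linearizing crudely ($-\log(1-x)\geq x$ and $\prod_j(1-x_j)\geq1-\sum_jx_j$) gives the ``first'' Janson bound $\pr(\bigcap_i\ov{E}_i)\leq\exp(-\mu+\tfrac12(\ov\gD-\mu))$; since $(\mu-\tfrac12(\ov\gD-\mu))\ov\gD\geq\mu^2\iff\ov\gD\leq2\mu$, this already gives $\exp(-\mu^2/\ov\gD)$ when $\ov\gD\leq2\mu$. In the complementary regime $\ov\gD>2\mu$, where the first bound is vacuous, I would apply it not to $[m]$ but to a random subfamily: include each index in $T$ independently with probability $q:=\mu/(\ov\gD-\mu)\leq1$. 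Since $\bigcap_{i\in[m]}\ov{E}_i\subseteq\bigcap_{i\in T}\ov{E}_i$, the first bound (for the subfamily) gives $-\log\pr(\bigcap_{i\in[m]}\ov{E}_i)\geq\mu_T-\tfrac12(\ov\gD_T-\mu_T)$ for every outcome of $T$; taking $T$ achieving at least the average and using $\E\mu_T=q\mu$, $\E(\ov\gD_T-\mu_T)=q^2(\ov\gD-\mu)$ yields $\pr(\bigcap_i\ov{E}_i)\leq\exp(-\mu^2/(2(\ov\gD-\mu)))$, within a factor $2$ in the exponent of the target. Recovering the sharp $\mu^2/\ov\gD$ --- which is tight when all the $A_i$ coincide --- requires not discarding the concavity of $x\mapsto-\log(1-x)$ at the start: one keeps the products $\prod_{j\in\mathcal N}(1-\pr(E_iE_j)/\pr(E_i))$ intact and estimates $\sum_i-\log(1-\gl_i)$ via a Cauchy--Schwarz/convexity argument rather than by $\sum_i\gl_i$; this final step I would lift from \cite{JLR}.

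I expect that last step to be the one real obstacle: the FKG-plus-conditioning skeleton is routine, but in the regime $\ov\gD\gg\mu$ the first-form estimate is useless, so extracting the precise exponent $\mu^2/\ov\gD$ forces one to combine the subsampling idea with a non-linearized handling of $\sum_i-\log(1-\gl_i)$; everything else is bookkeeping.
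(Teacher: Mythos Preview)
The paper does not prove Theorem~\ref{TJanson}; it is quoted as a standard preliminary with a pointer to \cite[Theorem~2.18(ii)]{JLR}, so there is no ``paper's own proof'' to compare against beyond that citation.

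Your sketch is the textbook route: FKG-plus-telescoping to obtain the basic estimate $\phi_i\ge\gl_i$, linearization to the first Janson bound $\exp[-\mu+\tfrac12(\ov\gD-\mu)]$, and random thinning to handle the regime $\ov\gD>2\mu$. All of this is correct. As you yourself flag, the thinning step only delivers $\exp[-\mu^2/(2(\ov\gD-\mu))]$ in that regime, which is within a factor~$2$ of the stated exponent but not equal to it, and you defer the recovery of the sharp $\mu^2/\ov\gD$ to \cite{JLR}. That is exactly what the paper does, so there is no discrepancy. (Your description of the missing step as ``keep the products intact and use Cauchy--Schwarz/convexity'' is plausible but vague; it is not obvious that this is how \cite{JLR} actually closes the gap, so if you were writing this up in earnest you would want to check rather than guess.)

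For what it is worth, the paper's sole application of Theorem~\ref{TJanson} is in Proposition~\ref{reasonforq}, where the resulting bound feeds into the definition of $q$ at \eqref{qdef}; all subsequent constraints on $q$ are of $\ll$/$\gg$ type, so a constant factor in the exponent is immaterial and your unsharpened bound would already suffice.
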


\begin{prop}\label{Pcvx}
Let $0 \leq b \leq a$, and for a random variable $X$ 
taking finitely many values in $[0,a],$
let
\[
\E'[X] = \E[X \mbone_{\{X \leq b\}}].
\]
Suppose $X$ is such a random variable with 
\beq{EXE'X}
\mbox{$\E[X] \geq \alpha\,\,$ and $\,\,\E'[X] \geq \beta$,}
\enq
where $0\leq \gb\leq \ga$.
Then 
the r.v.\ $\YYY$ with
\[
\mbox{$\pr(\YYY=b) = \gb/b$, $\pr(\YYY=a) = (\ga-\gb)/a~$ and 
$~\pr(\YYY=0) = 1-(\pr(\YYY=b)+\pr(\YYY=a))$}
\]
satisfies \eqref{EXE'X} (with equality)
and, for any convex $g:[0,a]\ra \mathbb{R}$
with $g(a) \leq g(0),$ 
\[
\E[g(\YYY)]  \geq \E[g(X)].
\]
\end{prop}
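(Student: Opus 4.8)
\textbf{Proof plan for Proposition~\ref{Pcvx}.}
The plan is to first check that $\YYY$ is a legitimate random variable — i.e.\ that the three stated probabilities are nonnegative and sum to $1$ — and that it satisfies \eqref{EXE'X} with equality; then to compare $\E[g(\YYY)]$ with $\E[g(X)]$ by a two-step extremal argument, pushing $X$ toward a distribution supported on $\{0,b,a\}$ while only increasing $\E[g(\cdot)]$ and preserving the constraints $\E[\cdot]\geq\ga$, $\E'[\cdot]\geq\gb$.

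For the first part: the hypothesis $0\leq\gb\leq\ga$ (and $\ga\leq a$, which we may assume, since otherwise $\E[X]\geq\ga$ is impossible for $X\in[0,a]$; in our applications $\ga$ will be at most $a$) gives $\pr(\YYY=b)=\gb/b\in[0,1]$ and $\pr(\YYY=a)=(\ga-\gb)/a\geq 0$, and one checks $\gb/b+(\ga-\gb)/a\leq 1$ using $b\leq a$ and $\ga\leq a$ (indeed $\gb/b+(\ga-\gb)/a\leq \gb/b+(a-\gb)/a$, and monotonicity in the remaining slack finishes it). Then $\E[\YYY]=b\cdot(\gb/b)+a\cdot((\ga-\gb)/a)=\ga$ and $\E[\YYY\mbone_{\{\YYY\leq b\}}]=b\cdot(\gb/b)=\gb$ (here one uses that the atom at $a$ does not contribute since $a>b$, barring the degenerate case $a=b$ which can be handled separately), so \eqref{EXE'X} holds with equality.

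For the comparison $\E[g(\YYY)]\geq\E[g(X)]$, the key observation is that convexity of $g$ on $[0,a]$ lets us replace mass of $X$ in the open interval $(0,b)$ by a convex combination of mass at $0$ and at $b$ with the same conditional mean, which does not decrease $\E[g(\cdot)]$, and likewise replace mass in $(b,a)$ by mass at $b$ and at $a$ with the same conditional mean. Concretely: write $X'$ for the r.v.\ obtained by conditioning on $\{X\leq b\}$ and, within that event, moving all mass to $\{0,b\}$ preserving the conditional mean $\E[X\mid X\leq b]$; and within $\{X>b\}$ moving all mass to $\{b,a\}$ preserving the conditional mean. By Jensen applied to each of the two (conditional) distributions, $\E[g(X')]\geq\E[g(X)]$; and $X'$ is supported on $\{0,b,a\}$ with $\E[X']=\E[X]\geq\ga$ and $\E'[X']=\E'[X]\geq\gb$ (the last because relocating the $\{X\leq b\}$-mass between $0$ and $b$ preserves its expectation, while the $\{X>b\}$-mass contributes $0$ to $\E'$ either way, as both $b$ and $a$ exceed — or equal — $b$; here one must be slightly careful about mass exactly at $b$, but it is harmless). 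Now $X'$ is determined by two numbers $p:=\pr(X'=b)$ and $q:=\pr(X'=a)$ satisfying $bp+aq\geq\ga$ and $bp\geq\gb$, and $\E[g(X')]=g(0)+(g(b)-g(0))p+(g(a)-g(0))q$. Since $g(a)\leq g(0)$, the coefficient of $q$ is $\leq 0$, so decreasing $q$ to its minimum $q=\max(0,(\ga-bp)/a)$ only helps; and since $g$ is convex with $g(a)\leq g(0)$, one has $g(b)\leq g(0)$ as well (as $b\in[0,a]$), so the coefficient of $p$ is also $\leq 0$ and decreasing $p$ to its minimum $p=\gb/b$ only helps. With $p=\gb/b$, the minimal feasible $q$ is $(\ga-\gb)/a$ (nonnegative since $\ga\geq\gb$), which is exactly the distribution of $\YYY$; hence $\E[g(\YYY)]\geq\E[g(X')]\geq\E[g(X)]$.

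The main obstacle — really the only thing requiring care — is the bookkeeping around the point $b$ itself and the two edge cases $a=b$ and $p=\gb/b>1$ or $q<0$: one wants to be sure the ``move mass to the endpoints preserving the conditional mean'' step is always available (it is, since the relevant conditional means lie in the relevant closed intervals $[0,b]$ and $[b,a]$) and that pushing $p$ down to $\gb/b$ is feasible, i.e.\ that $\gb/b\leq\pr(X\leq b)$ need not hold — it needn't, because lowering $p$ below $\pr(X'=b)$ just means shifting some of that mass to $a$, which is fine for the $\E[\cdot]\geq\ga$ constraint, or to $0$, which is fine for $\E'[\cdot]\geq\gb$ as long as $bp\geq\gb$; the feasible region in $(p,q)$ is a polyhedron and we are simply locating the vertex minimizing a linear functional with nonpositive coefficients, which is routine. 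I would present this last part as: the minimum of the linear function $\E[g(\cdot)]$ over the polyhedron $\{p,q\geq 0:\ bp\geq\gb,\ bp+aq\geq\ga\}$ (intersected with $p+q\leq 1$, which is not binding by the computation above) is attained at $(p,q)=(\gb/b,(\ga-\gb)/a)$, i.e.\ at $\YYY$.
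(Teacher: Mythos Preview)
Your approach is essentially the paper's: push the support of $X$ to $\{0,b,a\}$ by convexity (``spreading''), then optimize over distributions on those three points. The paper phrases the second step as ``take an abstract maximizer $Y$ and show $Y=Z$,'' while you phrase it as a two-variable linear program in $(p,q)=(\pr(X'=b),\pr(X'=a))$; these are the same content.

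There is, however, a real gap in your LP step. You argue that since the coefficients $g(b)-g(0)$ and $g(a)-g(0)$ are both $\le 0$, decreasing $q$ to its minimum and then decreasing $p$ to its minimum ``only helps.'' But these two minimizations conflict: once you have pushed $q$ down to the boundary $bp+aq=\ga$, decreasing $p$ forces $q$ back up along that line. Moving from $(p_0,(\ga-bp_0)/a)$ to $(\gb/b,(\ga-\gb)/a)$ along $bp+aq=\ga$ changes the objective by a multiple of
\[
(g(b)-g(0)) \;-\; \tfrac{b}{a}\,(g(a)-g(0)),
\]
and you need this to be $\le 0$, i.e.\ $(g(b)-g(0))/b \le (g(a)-g(0))/a$. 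That is the secant-slope monotonicity from convexity, and it is exactly what the paper uses when it spreads mass from $b$ to $\{0,a\}$. Merely knowing $g(b)\le g(0)$ and $g(a)\le g(0)$ does not give this; equivalently, your polyhedron has a second vertex $(\ga/b,0)$ that must be ruled out, and ``nonpositive coefficients'' alone do not do it. (Your final sentence also says ``minimum'' where you mean ``maximum.'')

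A smaller issue: your check that $\gb/b+(\ga-\gb)/a\le 1$ is incorrect as written---the bound $\gb/b+(a-\gb)/a$ equals $1+\gb(1/b-1/a)\ge 1$, so goes the wrong way. The right argument uses the existence of $X$: writing the expression as $\ga/a+\gb(1/b-1/a)$ shows it is monotone in $\ga,\gb$, so is at most $\E'[X]/b+(\E[X]-\E'[X])/a\le \pr(X\le b)+\pr(X>b)=1$.
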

\begin{proof}
We use $X\sim p$ for $\pr(X=x)=p(x)$ and $\supp(X) =\{x:p(x)\neq 0\}$.
It will be convenient to assume 
\beq{gassume}
\mbox{$g$ is strictly convex with $g(0)> g(a)$}
\enq
(as we may since any $g$ as in the proposition is a limit of $g$'s satisfying \eqref{gassume}).
It is then enough to show

\nin
\emph{Claim.}
If $Y$ satisfies \eqref{EXE'X} and $\E [g(Y)]$ is maximum subject to 
\eqref{EXE'X} and
\[
\supp(Y)\sub \supp(X) \cup \{0,b,a\},
\]
then $Y=Z$.

\nin
(For if $Y$ is as in the claim---note there \emph{is} such a $Y$---then 
$\E [g(Z)] = \E [g(Y)]\geq \E [g(X)]$.)

\mn
\emph{Proof of Claim.}  Supposing $Y\sim p$, say 
$W$ is obtained from $Y$ by \emph{spreading weight $\gc$ from $x$ to $\{y,z\}$} if
%$y,z\in [0,a]$,
$x=\gl y+(1-\gl)z$ for some $\gl\in (0,1)$,
$\gc\in (0,p(x)]$, and $W\sim q$ with
\[
q(w) =\left\{\begin{array}{ll}
p(y) +\gl \gc&\mbox{if $w=y$,}\\
p(z) +(1-\gl )\gc&\mbox{if $w=z$,}\\
p(x) -\gc&\mbox{if $w=x$,}\\
p(w)&\mbox{otherwise;}
\end{array}\right.
\]
and notice that in this case $\E W = \E Y$,
\[
\mbox{$\E' W = \E'Y\,$ if $\{x,y,z\}\sub [0,b]$ or $\{x,y,z\}\sub (b,a]$,}
\]
and (by strict convexity of $g$) $\E[g(W)]> \E[g(Y)]$.

The claim is then given by the following consequences of our assumptions on $Y$.

\nin
(a)  $\supp (Y)\sub \{0,b,a\}$.
(For if $x\in \supp(Y)\sm \{0,b,a\}$ then spreading weight $p(x)$ from $x$ to $\{0,b\}$
or $\{b,a\}$ (depending on whether $x$ is in $(0,b)$ or $(b,a)$), increases $\E g$
without affecting \eqref{EXE'X}, contrary to assumption.)

\nin
(b)  $E' Y \,\,( = p(b)\cdot b) =\gb$.
(Or spreading weight $p(b)-\gb/b$ from $b$ to $\{0,a\}$ produces $W$ with 
$\E W =\E Y$, $\E' W=\gb$, and $\E g(W) > \E g(Y)$,
again a contradiction.)

\nin
(c)  $Y=Z$.
(Or $p(a) >  \pr(Z=a)$ and $\E g(Y)-\E g(Z) = (p(a)-\pr(Z=a)) (g(a)-g(0)) <0$,
a final contradiction.)

\end{proof}

\section{Coloring $V^*$}\label{Sparse}

For the rest of the paper we assume the decomposition $V = V^* \cup C_1 \cup \cdots \cup C_k$
of Lemma~\ref{LACK}, and, setting $\vt = \eps^2/2$, recall
\beq{generic'}
|\overline{G}[N_v]| > \vt D^2 \ \ \ \forall v \in V^*,
\enq
which is all we need from the sparsity of $V^*.$

The goal of the present section is to produce an $L$-coloring of $V^*$.
Here it will be convenient to retain the cluster vertices, assigning them 
temporary ``dummy'' lists that we will later discard.

Fix $\eps$ small enough (depending on $\gd$) to support what follows.
(As noted following Lemma~\ref{LACK}, what this entails will appear mainly in 
Section~\ref{Clusters}; again, see ``Note on parameters'' following \eqref{notext}.)

We first want to say that we can (w.h.p.) specify, at some affordably small cost in list size, $T \subseteq V$
and an $L$-coloring $\sigma$ of $T$ such that for every $v \in V^*$,
\beq{ideal1}
|T \cap N_v| \sim e^{-1} D
\enq
and
\beq{ideal2}
|T \cap N_v| - |\sigma(T \cap N_v) \cap S_v| > \vt' D,
\enq
where
\beq{defVtPrime}
\vt' = e^{-3} \vt /4\ \ \ (= e^{-3} \eps^2/8).
\enq
(This value corresponds to the bound in \eqref{expectManyFraternalEvents},
but any fixed, positive $\vt'$ would do as well.
Membership of $v$ in $V^*$ is irrelevant for \eqref{ideal1}
but is, obviously,
needed for \eqref{ideal2}.)
Note that the combination of \eqref{ideal1} and \eqref{ideal2} gives
\[   %\beq{ideal3}
|S_v\sm \gs(T\cap N_v)|\more (1-e^{-1}+\vt')D.
\]   %\enq

Given $T$, $\gs$ satisfying \eqref{ideal1} and \eqref{ideal2},
the reduction to Theorem~\ref{TRS} is exactly as in \cite{APS} 
(making no use of the equality of the $S_v$'s) and we will not repeat it,
but briefly recall what ought to happen.
We choose fresh lists $L_v'$ of size $t$ slightly smaller than $\ell$ (to allow for colors
used in producing $(T,\gs)$) and let 
\[
L_v'':=L_v'\sm \gs(T\cap N_v)
\]
(the usable part of $L_v'$).  We then have the natural behaviors
\beq{nat1}
 |L_v''| \approx 
|S_v\sm \gs(T\cap N_v)|t/(D+1)\more (1-e^{-1}+\vt')t
\enq
and (for $\gc\in S_v$, with $d_\gc''$ denoting color degree w.r.t.\ $L''$),
\beq{nat2}
d''_\gc(v) \approx |\{w\sim v:\gc\in S_w\}| t/(D+1)
\less (1-e^{-1} )t
\enq
(which can badly overestimate if the $S_w$'s for $w\sim v$ aren't 
much like $S_v$). 

When \emph{actual} behavior agrees with \eqref{nat1} and 
\eqref{nat2}, Theorem~\ref{TRS} immediately
gives the desired extension of $\gs$ to $V^*\sm T$; but in reality there will 
will usually be exceptions, dealing with which takes some effort.
But, again, we 
refer to \cite{APS} for details and turn to the more interesting
question of arranging \eqref{ideal1} and \eqref{ideal2}.

\mn

A natural way to try to accomplish this
%\eqref{ideal1} and \eqref{ideal2} 
is as follows.  
Independently for each $v,$ choose
\beq{whereTauvXiv}
(\tau_v, \xi_v) \in S_v \times \{0,1\}
\enq
with $\tau_v$ (which we take to be a first member of $L_v$) uniform from $S_v$
and, setting $\gzz = (1 - 1 / (D+1))$,
\beq{tauUpProb} 
\pr(\xi_v = 1 | \tau_v = \gc) = \gzz^{D-d_\gc(v)}.
\enq
Let
\beq{defT}
T = \{v : \tau_w \neq \tau_v \forall w \sim v ,\, \xi_v = 1\} ~\text{ and } ~\sigma = \tau|_T.
\enq
Thus we put $v$ in $T$ (and set $\gs_v=\tau_v$) if $\tau_w\neq \tau_v$ $\forall w\sim v$
\emph{and} $\xi_v=1$.  The $\xi$'s simulate the situation of \cite{APS}, in which
all $S_v $'s are $ [D+1] $:  writing $\tilde{\pr}$ for probabilities associated with that situation
we have 
\beq{inTeq}
\pr(v \in T) = \tilde{\pr}(v \in T)  \,\, (= \gzz^D) \ \ \forall v \in V.
\enq
(It may seem that skipping the $\xi_v$'s,
thus expanding the set $T$ of vertices handled by this initial phase, can only help;
but if $T\cap N_v$ is too large, 
then $L_v \setminus \tau(\{v\} \cup N_v)$, the list of still-legal colors for $v$,
will tend to be too small for use in Theorem~\ref{TRS}.)

We first show that \eqref{ideal1} and \eqref{ideal2} are correct at the 
level of expectations.
The expectation version of \eqref{ideal1},
\beq{expectTinNbd}
\E|T \cap N_v| \sim e^{-1} D,
\enq
follows from \eqref{inTeq}.
Our treatment of \eqref{ideal2} depends on relationships between the various lists $S_v$,
a departure from \cite{APS} where the lists are all the same.  Set
\beq{defFv}
F_v = \{ (uw, \gc) :uw\in \overline{G}[N_v] , \gc \in S_u \cap S_w \}
\enq
and
\beq{defAv}
A_v = \{(w, \gc): w\in N_v , \gc \in S_w \setminus S_v\}.
\enq
We first observe that \eqref{generic'} implies at least one of these is large:
\begin{prop}\label{sparseCases}
If $|\overline{G}[N_v]| \geq \vt D^2$ (for some $\vt$), then either
\beq{manyFraternals}
|F_v| \geq \vt D^3/2
\enq
or
\beq{manyAliens}
|A_v| \geq \vt D^2/2.
\enq
\end{prop}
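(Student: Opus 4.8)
The plan is to prove the dichotomy by showing that the failure of \eqref{manyAliens} forces \eqref{manyFraternals}. So I would assume $|A_v| < \vt D^2/2$, equivalently $\sum_{w\in N_v}|S_w\setminus S_v| < \vt D^2/2$, and aim to deduce $|F_v|\ge \vt D^3/2$.

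The one estimate that does the work is a pointwise lower bound on $|S_u\cap S_w|$ obtained by forcing the shared colors into the ``common pool'' $S_v$. Since $|S_u|=|S_w|=|S_v|=D+1$, inclusion--exclusion inside $S_v$ gives, for any $u,w$,
\[
|S_u\cap S_w| \;\ge\; |S_u\cap S_w\cap S_v| \;\ge\; |S_u\cap S_v| + |S_w\cap S_v| - |S_v| \;=\; (D+1) - |S_u\setminus S_v| - |S_w\setminus S_v|,
\]
using $|S_u\cap S_v| = (D+1)-|S_u\setminus S_v|$ and similarly for $w$.

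Next I would sum this over the pairs $uw\in\overline{G}[N_v]$. Since $G$ is $D$-regular we have $|N_v|=D$, so each fixed $w\in N_v$ lies in at most $D-1$ such pairs; hence
\[
|F_v| \;=\; \sum_{uw\in\overline{G}[N_v]} |S_u\cap S_w| \;\ge\; (D+1)\,|\overline{G}[N_v]| - (D-1)\sum_{w\in N_v}|S_w\setminus S_v| \;=\; (D+1)\,|\overline{G}[N_v]| - (D-1)\,|A_v|.
\]
Feeding in $|\overline{G}[N_v]|\ge \vt D^2$ and the standing assumption $|A_v|<\vt D^2/2$ then yields $|F_v| > \big((D+1)-(D-1)/2\big)\vt D^2 > (D/2)\,\vt D^2 = \vt D^3/2$, which is \eqref{manyFraternals}.

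There is no serious obstacle here; the statement is a short counting argument, and the only thing one needs to spot is that the fraternal count should be bounded below by restricting the common colors to $S_v$ — this is what makes the three pairwise intersections interact through a single $(D+1)$-set — after which the shortfall is controlled, up to the harmless factor $D-1$, by precisely the alien count $A_v$.
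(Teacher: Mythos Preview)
Your proof is correct and is essentially the same as the paper's: assume \eqref{manyAliens} fails, bound $|S_u\cap S_w|$ from below via $|S_u\cap S_w\cap S_v|\ge |S_v|-|S_u\setminus S_v|-|S_w\setminus S_v|$, sum over $uw\in\overline{G}[N_v]$, and control the subtracted terms by $(D-1)|A_v|$. The paper's write-up differs only cosmetically (it uses $D$ in place of $D+1$, noting the difference is irrelevant, and writes $|S_v\setminus S_w|$ before observing it equals $|S_w\setminus S_v|$).
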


\nin
(Of course in \cite{APS} there is only \eqref{manyFraternals}.)

\begin{proof}%[Proof of Proposition \ref{sparseCases}]
Write $\sum'$ 
for $\sum_{uw \in \overline{G}[N_v]}$ and set $d'(w) = |N_v\sm (N_w\cup \{w\})|.$
Supposing \eqref{manyAliens} fails, we have
(using, in order,
$|S_v| > D$,
\eqref{generic'},
$d'(w) < D$, 
and failure of \eqref{manyAliens} in the last four inequalities)
\begin{align*}
|F_v| &
= \sum\nolimits' |S_u \cap S_w| \geq \sum\nolimits' |S_v\cap S_u \cap S_w|
~\ge~ 
\sum\nolimits' [D - (|S_v \setminus S_w| + |S_v \setminus S_u|)] \\
&= \sum\nolimits' [D - (|S_w \setminus S_v| + |S_u \setminus S_v|)] 
~\ge ~\vt D^3 - \sum_{w \sim v} d'(w) |S_w \setminus S_v| 
~>~\vt D^3 - D |A_v| 
>~\vt D^3/2.
\end{align*}
\end{proof}

So we want to show that each of \eqref{manyFraternals}, \eqref{manyAliens}
implies an expectation version of \eqref{ideal2}.
Suppose first  that \eqref{manyFraternals} holds, and for $(uw, \gc) \in F_v$
define the event
\[
E_{uw, \gc}^v = \{u, w \in T, \tau_u = \tau_w = \gc \neq \tau_x \ \forall x \in N_v \sm \{u,w\}\}.
\]
For the l.h.s. of \eqref{ideal2} we have
\beq{fraternalLowerBound}
|T \cap N_v| - |\sigma(T \cap N_v) \cap S_v| 
\geq |\{(uw, \gc) \in F_v : E_{uw, \gc}^v \text{ holds}\}|
\enq
(since if $E_{uw, \gc}^v$ holds, then the pair $u,w$ contributes 2
to $|T \cap N_v|$ but at most 1 to $|\sigma(T \cap N_v) \cap S_v|$).  So 
we aim to establish \eqref{ideal2}
with its l.h.s.\ replaced by this lower bound; that is,
\beq{fraternalGetSlack}
|\{(uw, \gc) \in F_v : E_{uw, \gc}^v \text{ holds}\}|
> \vt' D \,\,\,\mbox{for all $v$ satisfying \eqref{manyFraternals};}
\enq
so we want an expectation version of this.

Let $N_\gc(u)=\{w\sim u:\gc\in S_w\}$, $N_\gc(U)=\cup_{u\in U}N_\gc(u)$ and 

\beq{defJuwgam}
J_{uw, \gc}^v = N_\gc(v,u,w) \setminus \{u,w\},
\enq
and note that
\beq{prEuw}
E_{uw, \gc}^v = \{\tau_u = \tau_w = \gc\} \land \{\tau_x \neq \gc \ \forall x \in J_{uw, \gc}^v\}
\land \{\xi_u = \xi_w = 1\} .
\enq
Thus
\beq{prEuw'}
\pr(E_{uw, \gc}^v) 
=(D+1)^{-2} \gzz^{|J_{uw, \gc}^v| + 2D - d_\gc(u) - d_\gc(w)}
\gtrsim e^{-3} D^{-2},
\enq
(where $|J_{uw, \gc}^v|$ and $2D - d_\gc(u) - d_\gc(w)$ correspond to the 2nd
and 3rd conjuncts in \eqref{prEuw}),
and combining with \eqref{manyFraternals} gives the desired
\beq{expectManyFraternalEvents}
\E|\{(uw, \gc) \in F_v : E_{uw, \gc}^v \text{ holds} \}| \gtrsim e^{-3} \vt D/2 =2\vt'D.
\enq

\mn

Now suppose \eqref{manyAliens} holds, and for $(w, \gc) \in A_v$,
define the event
\[
K_{w, \gc} = \{w \in T, \tau_w = \gc\}.
\]
For the l.h.s. of \eqref{ideal2} we now have
\beq{alienLowerBound}
|T \cap N_v| - |\sigma(T \cap N_v)\cap S_v| \geq |\{(w, \gc) \in A_v : K_{w, \gc} \text{ holds} \}|
\enq
(since if $K_{w, \gc}$ holds, then $w$ contributes 1 to $|T \cap N_v|$ and 
0 to $|\sigma(T \cap N_v) \cap S_v|$).
So we again aim for a strengthening of \eqref{ideal2}; viz.
\beq{alienGetSlack}
|\{(w, \gc) \in A_v : K_{w, \gc} \text{ holds} \}|
> \vt' D \,\,\,\mbox{for all $v$ satisfying \eqref{manyAliens}.}
\enq
Here for an expectation version we just note that
for $(w, \gc) \in A_v$,
\[
\pr(K_{w, \gc}) = (D+1)^{-1} \gzz^D \gtrsim e^{-1} D^{-1},
\]
so \eqref{manyAliens} implies
\beq{expectManyAlienEvents}
\E|\{(w, \gc) \in A_v : K_{w, \gc} \text{ holds} \}| \gtrsim e^{-1} \vt D/2 \,\,(=2 \vt'D).
\enq

\mn

So either of the possibilities in Proposition~\ref{sparseCases} supports an expectation version of 
\eqref{ideal2}; precisely,
\beq{Eideal2}
\E [|T \cap N_v| - |\sigma(T \cap N_v)\cap S_v| ] \gtrsim 2 \vt'D.
\enq

Given the above expectation versions,
we would
like to say, using martingale concentration and a union bound over $v$, that 
w.h.p.\ all left-hand sides in \eqref{ideal1} and \eqref{ideal2} 
are close to their expectations;  in which case, as suggested in Section~\ref{Sketch}
and formalized below, 
the lists $L_v\sm \{\tau_v\}$ will eventually support
use of Theorem~\ref{TRS}.  
This works for $n$ up to some $\exp[D^{1 - o(1)}]$ 
(recall
we may assume $(1+\gd)\log n< D+1$); but
for very large $n$ the concentration is not strong enough and 
we proceed less directly, as follows.

Say $n$ is \emph{large} (\emph{small}) if it satisfies (doesn't satisfy)
\beq{defLarge}
n > \exp[D^{0.9}].
\enq
In Section~\ref{getGoodTsigma} we will show
\beq{small23}
\mbox{\emph{if $n$ is small then w.h.p.\ the T and $\gs$ of \eqref{defT}
satisfy \eqref{ideal1} and \eqref{fraternalGetSlack} for all $v\in V^*$.}}
\enq

For larger $n$ we 
set $\ell = 0.1 \delta \log n$ and
choose the $\tau_v$'s in two steps:

\mn 
\textit{Step 1.} Each $v$ chooses (independently, uniformly) 
$L_v^0 \in {S_v \choose \ell}$, and we set $L^0 = (L_v^0 : v \in V)$. 

\nin
\textit{Step 2.} Each $v$ chooses (independently, uniformly)
$\tau_v\in L_v^0$ and $\xi_v$ as in \eqref{tauUpProb},
and we define $T, \sigma$ as in \eqref{defT}.

\mn
The crucial gain here is that we now need w.h.p.\ statements \emph{only for the} $L_v^0$'s:
it is enough to say that w.h.p.\ these partial lists support
\emph{existence}
of a good 
$T$ and $\gs$:  
\beq{CL0}
\mbox{\emph{if n is large, then w.h.p.\ 
$L^0$ is
such that w.p.p.\ the $T$ and $\gs$ of 
Step 2 satisfy \eqref{ideal1} and \eqref{ideal2} for all $v \in V^*$},}
\enq   

\nin
where \emph{w.p.p.---with positive probability}---just means the probability is not zero.
Thus the 2-step procedure succeeds because concentration for the relevant quantities output by
Step 1---those in Lemma~\ref{LPh1a}---\emph{is} strong enough to support union bounds, and the somewhat 
weaker concentrations of Step 2 are (far) more than enough for use with the 
Local Lemma.

For coloring
$V^*\sm T$, we then forget what we know about $L^0$
and work with fresh,
slightly smaller lists---thus the cost in list size mentioned preceding \eqref{ideal1}---so as to 
avoid the no longer comprehensible law of $L^0$.

In the rest of this section we set up the proof of \eqref{CL0}---namely, specifying, in 
Lemma~\ref{LPh1a}, the (w.h.p.) properties of $L^0$ that will, as asserted in Lemma~\ref{LC0},
support the ``w.p.p.'' statement of \eqref{CL0}---and 
then finish the sparse phase assuming these lemmas, whose proofs are postponed to
Section~\ref{Concentration}.
So for the next few paragraphs (through the statement of Lemma~\ref{LC0}) we 
continue to think of large $n$.

We now use $\pr'$ and $\E'$ for probabilities and expectations associated with Step 1,
and $\pr''$ for probabilities associated with Step 2; thus the
$\pr''(\cdots)$'s are r.v.s determined by $L^0$. We continue to write
$\pr$ and $\E$ for the probabilities and expectations associated
with $(\tau_v, \xi_v)$'s chosen as in \eqref{tauUpProb}.
The law of total expectation gives
\beq{LOTE}
\E' \pr''(\cdot) = \pr(\cdot) \  \text{ and } \ \E' \E''[\cdot] = \E[\cdot].
\enq

Let $\theta =o(1)$ and $C =\go(1)$ 
satisfy 
\beq{CtoThetaBig}
C^\theta D^{-\theta/C} = \omega(1),
\enq
plus the milder $C = D^{o(1)}$ 
and---\emph{only when n is large}---$C \theta^{-2} = o(\ell)$ and $\log(CD) = o(\theta \ell)$.
(E.g.\
$C=2\log D/\log\log D$ and any $\theta =\go(1/\log\log D)$
meet these requirements.)

\begin{lemma}\label{LPh1a}
W.h.p. (w.r.t. $\pr'$, i.e. 
the choice of $L^0$)
\beq{equivProbs}
\text{for all } v, \ \ \pr''(v \in T) \sim \pr(v \in T);
\enq
\beq{summablySimilarFraternals}
\text{for all } v, \ \ \left|\sum_{(uw, \gc) \in F_v} [\pr''(E_{uw, \gc}^v) - \pr(E_{uw, \gc}^v)] \right| = o(D);
\enq
\beq{summablySimilarAliens}
\text{for all } v, \ \ \left|\sum_{(w, \gc) \in A_v} [\pr''(K_{w,\gc}) - \pr(K_{w,\gc})] \right| = o(D);
\enq
\beq{L0smallFootprint}
\mbox{for all $v, Y \in {N_v \choose \theta D}$ and $ \Gamma_0 \sub \cup_{y\in Y}S_y$
with $|\gG_0| \leq \theta D/C$}, \ \ \
\sum_{y \in Y} |L_y^0 \cap \Gamma_0| < 2 \theta^2 D \ell / C;
\enq
and 
for all $v$ and $\pi: N_v \to \Gamma$ with $\pi_z\in S_z$ $\forall z$,
\beq{fewBystanders}
\sum_{w \not\sim v} |\pi(N_w \cap N_v) \cap L_w^0| < 2 D \ell.
\enq
\end{lemma}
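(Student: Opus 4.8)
The plan is to prove each of the five displayed assertions \eqref{equivProbs}--\eqref{fewBystanders} separately, since each is a w.h.p.\ statement about the random lists $L^0 = (L_v^0)$ and all five survive a union bound over $v$ (there are only $n$ vertices, and $n \le \exp[D^{1-o(1)}]$ for large $n$, so it will be enough to have failure probability for each individual $v$ of order $\exp[-\omega(D)]$ or $\exp[-D^{1-o(1)}]$). Throughout, the key point is that each quantity on the left is, for fixed $v$ and fixed auxiliary data ($Y$, $\Gamma_0$, $\pi$, etc.), a sum of many ``small'' terms each depending on only one $L_w^0$, so the $L_w^0$'s being independent lets us apply a Chernoff/Bernstein-type bound (Theorems~\ref{T2.1}, \ref{Cher'}) or, when the terms are not literally independent but have the bounded-differences structure, the martingale concentration recalled in Section~\ref{Martingales}. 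The parameter constraints \eqref{CtoThetaBig} and the ``only when $n$ is large'' ones ($C\theta^{-2} = o(\ell)$, $\log(CD) = o(\theta\ell)$) are exactly what is needed to make these tail bounds beat the union bounds; I would keep track of where each is used.

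For \eqref{equivProbs}: conditioned on $L^0$, $\pr''(v\in T) = \sum_{\gc} (\ell)^{-?}\cdots$ — more precisely $\pr''(v\in T) = \E''\big[\gzz^{D - d_\gc(v)}\mathbf 1\{\tau_v = \gc, \tau_w\ne\gc\ \forall w\sim v\}\big]$, which after integrating out everything except the $\tau$'s can be written as a product-type quantity governed by the $|L_w^0\cap\{\gc\}|$'s; comparing to $\pr(v\in T) = \gzz^D$ reduces to saying $\ell^{-1}|L_w^0\cap S|$ is close to $|S|/(D+1)$ simultaneously for the relevant sets $S$ — standard hypergeometric concentration, with relative error $o(1)$ provided $\ell = \omega(1)$ suitably (using $C\theta^{-2}=o(\ell)$ is overkill here but available). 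For \eqref{summablySimilarFraternals} and \eqref{summablySimilarAliens}: here the sum has $\Theta(D^3)$ (resp.\ $\Theta(D^2)$) terms but we only need the total discrepancy to be $o(D)$, i.e.\ an \emph{averaged} statement, so after using \eqref{LOTE} (which gives $\E'\pr''(\cdot) = \pr(\cdot)$, hence each bracket has mean $0$) we expand $\pr''(E^v_{uw,\gc})$ in terms of the indicator variables $\mathbf 1\{\gc\in L^0_u\}$, $\mathbf 1\{\gc\in L^0_w\}$, $\mathbf 1\{\gc\in L^0_x\}$ for $x\in J^v_{uw,\gc}$, and estimate the variance of the whole sum; the cross terms are controlled because two events $E^v_{uw,\gc}$, $E^v_{u'w',\gc'}$ share list-randomness only when they share a vertex or a color, of which there are relatively few, and each $|L^0_x|=\ell \ll D$ so the ``footprint'' of each list is small. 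Alternatively, and perhaps more cleanly, one exposes the lists $L^0_w$ one vertex at a time and bounds the martingale increments by the maximum contribution of a single list to the sum, which is $O(\ell)$ per term times the number of terms touching that vertex, $O(D^2)$ for \eqref{summablySimilarFraternals}; Azuma then gives a tail of the form $\exp[-\Omega(D^2/(n\cdot(\text{increment})^2))]$ — I would need to be careful that this genuinely beats the union bound, which is where $\ell = D^{o(1)}$ (via $C=D^{o(1)}$, $\theta = o(1)$, so $\ell = 0.1\delta\log n$ and... ) really matters and may force using the finer Freedman-type inequality from Section~\ref{Martingales} rather than vanilla Azuma.

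For \eqref{L0smallFootprint} and \eqref{fewBystanders}, which are ``worst-case over combinatorial choices'' statements, the strategy is a union bound over those choices combined with a Chernoff bound for each fixed choice. In \eqref{L0smallFootprint}, fix $v$, $Y\in\binom{N_v}{\theta D}$ and $\Gamma_0$ with $|\Gamma_0|\le\theta D/C$; then $\sum_{y\in Y}|L^0_y\cap\Gamma_0|$ is a sum of $\theta D$ independent hypergeometrics, each with mean $\ell|\Gamma_0|/(D+1) \le \ell\theta/C$, so the total mean is $\le \theta^2 D\ell/C$ and Theorem~\ref{T2.1} (or \ref{Cher'}, since we want to double the mean) gives failure probability $\le \exp[-\Omega(\theta^2 D\ell/C)]$; the number of choices of $(Y,\Gamma_0)$ is at most $\binom{D}{\theta D}\binom{(\theta D/C)\cdot D}{\theta D/C}\le \exp[O(\theta D\log(1/\theta) + (\theta D/C)\log(CD))]$, and the condition $\log(CD) = o(\theta\ell)$ together with $\theta\log(1/\theta) = o(\theta\ell/C)$... — here is where I expect the real work: verifying that the union-bound exponent $O(\theta D\log(1/\theta)) + O((\theta D/C)\log(CD))$ is genuinely dominated by $\Omega(\theta^2 D\ell/C)$. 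This is what forces $\log(CD) = o(\theta\ell)$ (handles the second term after multiplying through by $C$... actually $ (\theta D /C)\log(CD) = o(\theta D /C \cdot \theta \ell) $ needs $\log(CD) = o(\theta \ell)$, good) and one needs an analogous bound on $\log(1/\theta)$ versus $\theta\ell/C$, i.e.\ $C\log(1/\theta) = o(\ell)$, which should follow from $C\theta^{-2}=o(\ell)$ since $\log(1/\theta)\le\theta^{-2}$. For \eqref{fewBystanders} the argument is the same shape: fix $v$ and a ``color pattern'' $\pi$; then $\sum_{w\not\sim v}|\pi(N_w\cap N_v)\cap L^0_w| = \sum_{w\not\sim v}\sum_{z\in N_w\cap N_v}\mathbf 1\{\pi_z\in L^0_w\}$ is a sum of independent indicators over $w$ (independent because distinct $w$'s have independent lists), with mean $\sum_{w\not\sim v}|N_w\cap N_v|\cdot\ell/(D+1) \le \sum_{w\not\sim v}|N_w\cap N_v|\cdot\ell/(D+1)$; since $\sum_w |N_w\cap N_v| \le D^2$ (each $z\in N_v$ has $\le D$ non-neighbors... ) the mean is $\le D\ell$, and Theorem~\ref{T2.1} gives the factor-$2$ deviation with probability $\le\exp[-\Omega(D\ell)]$; the number of patterns $\pi$ is $\prod_{z\in N_v}|S_z| \le (D+1)^D = \exp[O(D\log D)]$, so we need $D\log D = o(D\ell)$, i.e.\ $\log D = o(\ell)$ — true for large $n$ since $\ell = 0.1\delta\log n \ge 0.1\delta D^{0.9}$. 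I would organize the writeup as: (i) reduce everything to per-$v$ statements; (ii) prove \eqref{equivProbs} and the two ``summable'' bounds via martingale concentration on lists exposed one at a time; (iii) prove \eqref{L0smallFootprint} and \eqref{fewBystanders} via Chernoff-plus-union-bound, with the parameter inequalities invoked exactly where the two exponents are compared. The main obstacle, as flagged, is the bookkeeping in steps (ii) and (iii) to confirm that the concentration exponents dominate both the union-bound-over-$v$ cost ($\log n \le D^{1-o(1)}$) and, in (iii), the additional union-bound-over-combinatorial-choices cost; none of it is deep, but it is where the specific forms of $\theta$, $C$, $\ell$ in \eqref{CtoThetaBig} and the ``large $n$'' conditions earn their keep.
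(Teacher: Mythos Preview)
Your treatment of \eqref{L0smallFootprint} and \eqref{fewBystanders} is correct and essentially what the paper does. The gap is in \eqref{summablySimilarFraternals} (and analogously \eqref{summablySimilarAliens}): neither of your proposed routes works. Second moment gives only a Chebyshev tail, not the $n^{-\omega(1)}$ needed for the union bound over $v$. The direct martingale---exposing all lists and bounding the Lipschitz effect of a single list element---also fails, because the sum depends on $L^0_z$ for every $z$ within distance~$2$ of $v$ (through the counts $|J^{v,0}_{uw,\gamma}|$), and changing one element of $L^0_z$ for $z\in N_v$ can in the worst case shift $|J^{v,0}_{uw,\gamma}|$ for $\Theta(D^2)$ pairs $uw$, forcing a Lipschitz constant $c=O(D^2\ell^{-3})$. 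With any reasonable $M$ the resulting bound $\exp[-\lambda^2/(c^2M)]$ cannot reach $1/n$ for $\lambda=o(D)$, since $\ell=\Theta(\log n)\le D$; the ``finer Freedman-type inequality'' does not rescue this because in regime~(B) the sets $\Sigma_z$ cannot depend on earlier list choices.

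The idea you are missing is a two-level decomposition. Writing $Y^v_{uw}=\sum_\gamma\pr''(E^v_{uw,\gamma})$, one sets $f^v_{uw}=\E'[\,Y^v_{uw}\mid L^0_u\cap L^0_w\,]$ and $X_v=\sum_{uw}f^v_{uw}$, and bounds the left side of \eqref{summablySimilarFraternals} by $\sum_{uw}|Y^v_{uw}-f^v_{uw}|+|X_v-\E'X_v|$. The point is that Observation~\ref{binomPowerExpect} (an instance of \eqref{LOTE}) evaluates $f^v_{uw}$ in closed form as a function of $L^0_u\cap L^0_w$ alone; so $X_v$ depends on just the $D$ lists indexed by $N_v$, not on distance-$2$ lists, and the outer martingale (Lemma~\ref{martConc} with $c=D\ell^{-2}$, $M=D\ell$) gives $|X_v-\E'X_v|=o(D)$ w.v.h.p. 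The inner terms are then local: conditioning on $L^0_u\cap L^0_w=\Sigma$ (so $|\Sigma|\le\ell$) and taking $\Sigma_z=\Sigma\cap S_z$ in Lemma~\ref{martConc} yields $c=\ell^{-3}$, $M=O(\ell^2)$, hence $|Y^v_{uw}-f^v_{uw}|=o(1/D)$ for each pair. Your sketch for \eqref{equivProbs} has a related problem---controlling each $d^0_\gamma(v)$ individually fails since its fluctuation is $\Theta(\sqrt\ell)$, not $o(\ell)$, with probability $1-n^{-\omega(1)}$---and the paper again conditions first (on $L^0_v=\Sigma$, noting the conditional mean is exactly $\gzz^D$ regardless of $\Sigma$) before applying Lemma~\ref{martConc} to the whole sum with $\Sigma_z=\Sigma\cap S_z$.
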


\begin{lemma}\label{LC0}
If $L^0$ is as in Lemma \ref{LPh1a}, then w.p.p. Step 2 gives $T, \sigma$ satisfying 
\eqref{ideal1}, \eqref{fraternalGetSlack} and \eqref{alienGetSlack}.
\end{lemma}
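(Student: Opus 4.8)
The plan is to apply the Lov\'asz Local Lemma (Lemma~\ref{LLL}) to the bad events ``$v$ fails \eqref{ideal1}'', ``$v$ fails \eqref{fraternalGetSlack}'' and ``$v$ fails \eqref{alienGetSlack}'', where now the randomness is only that of Step~2 (the choices of $\tau_v\in L_v^0$ and $\xi_v$), conditioned on an $L^0$ as guaranteed by Lemma~\ref{LPh1a}. Each such event depends only on the Step-2 choices at $v$ and at vertices within distance~$2$ of $v$ (since $E^v_{uw,\gc}$ and $K_{w,\gc}$ involve $N_v$ and the $J^v_{uw,\gc}\sub N_\gc(v,u,w)$, all of which lie in the $2$-neighborhood of $v$); so the dependency graph $\frak{G}$ has $\gD_{\frak{G}} = D^{O(1)}$, and to run the LLL it suffices to show each bad event has $\pr''(\cdot) = \exp[-\go(\log D)]$, in fact $\exp[-D^{\gO(1)}]$.

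First I would record the relevant Step-2 expectations. From \eqref{equivProbs} and \eqref{inTeq}, $\E''|T\cap N_v| = \sum_{w\sim v}\pr''(w\in T) \sim \sum_{w\sim v}\pr(w\in T) = D\gzz^D \sim e^{-1}D$, giving the expectation form of \eqref{ideal1}. Similarly, using \eqref{summablySimilarFraternals} together with \eqref{expectManyFraternalEvents}, $\E''|\{(uw,\gc)\in F_v:E^v_{uw,\gc}\}| \gtrsim 2\vt'D - o(D) > \vt'D$ (a comfortable constant-factor margin) whenever $v$ satisfies \eqref{manyFraternals}; and \eqref{summablySimilarAliens} with \eqref{expectManyAlienEvents} gives the analogous lower bound $> \vt'D$ for the alien count when $v$ satisfies \eqref{manyAliens}. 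So in all cases the Step-2 expectation of each quantity exceeds the threshold appearing in \eqref{ideal1}, \eqref{fraternalGetSlack}, \eqref{alienGetSlack} by a constant factor.

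The substance is the concentration, i.e.\ showing each quantity is within a constant factor of its Step-2 mean except with probability $\exp[-D^{\gO(1)}]$. Here I would use a bounded-differences / martingale concentration argument, exposing the Step-2 choices $(\tau_w,\xi_w)$ one vertex at a time (in the $2$-neighborhood of $v$). The obstacle — and the reason properties \eqref{L0smallFootprint} and \eqref{fewBystanders} are built into Lemma~\ref{LPh1a} — is that changing a single $\tau_w$ can in principle affect many terms: for the fraternal count it can kill all events $E^v_{uw,\gc}$ with $w\in J^v_{u'w',\gc}$, i.e.\ with $\tau_w$ equal to the relevant color $\gc$ and $w\in N_\gc(v,u',w')$; the number of such terms is controlled by how often a given color lands in the lists $L_w^0$ of the many neighbors involved, and \eqref{L0smallFootprint}, \eqref{fewBystanders} are exactly the statements that no color (equivalently, no map $\pi$ recording a putative common color pattern) is over-represented across the relevant $L_w^0$'s. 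Thus, conditioned on the good $L^0$, each of these quantities is a function of the Step-2 choices with bounded ``Lipschitz'' behavior in the sense needed for a McDiarmid/Azuma-type inequality (or the Talagrand-style ``certifiable'' variants), with bounds polynomial in $D$ and total variance $O(D\cdot D^{o(1)})$, which yields the desired $\exp[-D^{\gO(1)}]$ tail — far smaller than $1/(e(\gD_{\frak{G}}+1))$. Combining the concentration bounds for the three event families via a union bound within the LLL hypothesis then gives $\pr''(\text{all good})>0$, i.e.\ the ``w.p.p.'' statement, which is exactly \eqref{CL0} once one notes (as in Section~\ref{Sparse}) that \eqref{ideal1} and \eqref{fraternalGetSlack}/\eqref{alienGetSlack} together imply \eqref{ideal2} via \eqref{fraternalLowerBound} and \eqref{alienLowerBound} and Proposition~\ref{sparseCases}. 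The main work, then, is the careful Lipschitz bookkeeping for the fraternal count — the alien count being simpler since each $w$ contributes to only the $O(D)$ events $K_{w,\gc}$ with $w\sim v$ — and I expect the precise exposure order and the use of \eqref{L0smallFootprint}, \eqref{fewBystanders} to be where the real care is needed; these details are presumably what Section~\ref{Concentration} supplies.
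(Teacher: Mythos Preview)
Your overall architecture is correct and matches the paper: use the Local Lemma with dependency graph of degree $O(D^4)$, establish the Step-2 expectations via \eqref{equivProbs}--\eqref{summablySimilarAliens}, and then get $\exp[-D^{\Omega(1)}]$ concentration for each of the three quantities using the martingale lemma (regime~(A) of Lemma~\ref{martConc}). But you have the relative difficulties inverted, and this reflects a real misunderstanding of the Lipschitz structure.

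The fraternal count is the \emph{easiest} of the three, not the hardest. The events $E^v_{uw,\gc}$ are nearly disjoint: for a fixed configuration $(\tau,\xi)$ and a fixed color $\gc$, at most one pair $uw$ can satisfy $E^v_{uw,\gc}$ (since the event forces $u,w$ to be the \emph{only} vertices of $N_v$ with $\tau$-value $\gc$). Hence changing a single $(\tau_i,\xi_i)$ from color $\alpha$ to color $\beta$ can affect only the (at most one) event with $\gc=\alpha$ and the (at most one) with $\gc=\beta$, giving $c=O(1)$ directly---no appeal to \eqref{L0smallFootprint} is needed for this. Your worry that ``changing a single $\tau_w$ can kill all events $E^v_{u'w',\gc}$ with $w\in J^v_{u'w',\gc}$'' overlooks that only one of those events can be alive to begin with.

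Conversely, $|T\cap N_v|$ and the alien count (which is a count of certain $w\in T\cap N_v$) are the ones with a genuine Lipschitz issue: changing $\tau_i$ for $i$ at distance~2 from $v$ can flip membership in $T$ for every $z\in N_i\cap N_v$ with $\tau_z\in\{\alpha,\beta\}$, and there is no \emph{a priori} bound on how many such $z$ there are. The paper handles this with the auxiliary set $Z=Z(v,\tau_{[D]})$ of \eqref{defZvtau}, restricting attention to $|T\cap Z|$ (respectively $|\{w:K_w\}\cap Z|$), on which the Lipschitz constant is $C$ by construction; and \eqref{L0smallFootprint} is what feeds into \eqref{vtaubad} to show $|N_v\setminus Z|$ is negligible. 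Finally, \eqref{fewBystanders} is used not to bound the per-coordinate change $c$ but to bound $M$ in \eqref{unlikelyActive} (the expected number of ``active'' distance-2 vertices $i$ with $\tau_i\in\tau(N_i\cap N_v)$), without which $M$ could be as large as $D^2$ and the martingale bound would be useless. So your instinct that \eqref{L0smallFootprint} and \eqref{fewBystanders} are the crux is right, but their targets are \eqref{ideal1} and \eqref{alienGetSlack}, not \eqref{fraternalGetSlack}.
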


The proofs of Lemmas~\ref{LPh1a} and \ref{LC0}
are given in Section~\ref{Concentration}, following a quick review of
what we need in the way of martingale concentration.
Once we have $T$ and $\gs$ as in \eqref{ideal1}-\eqref{ideal2}, 
we add colors (beyond the $\tau_v$'s for small $n$ or $L_v^0$'s for large)
and want to reduce the problem of extending $\gs$ to $V^*\sm T$ to an application of
Theorem~\ref{TRS}.  This is still a slightly long story, but is almost verbatim as in 
\cite[Sec.~4]{APS}, the only difference being that the new colors (called $L_v'$ in
\cite{APS}) are now chosen from the $S_v$'s rather than $[D+1]$; so we will not repeat it here.

\section{Martingales}\label{Martingales}

As in \cite{APS}, what we need from martingale concentration is
provided by \cite[Sec.\ 3]{aglc}. 
Our interest here is in concentration of a r.v.\ $X =X(\go)$, with
$\go=(\go_1\dots \go_m)$ drawn from some
$\gO =\prod_{i\in [m]}\gO_i$.
We will need two versions, the first of which assumes a product 
measure on $\gO$:

\begin{itemize}
\item $\omega_1 \in \Omega_1, \cdots,\omega_m \in \Omega_m$ are independent;
\item for each $i \in [m]$ and $\eps = (\eps_1, \cdots, \eps_m) \in \Omega$
we are given some $\Sigma_i = \Sigma_i(\eps_1, \cdots, \eps_{i-1}) \subseteq \Omega_i$.
\end{itemize}

\nin
(In \cite{APS} we assume 
$\go_i$ is uniform from $\gO_i$, but the proof of \cite[Lemma~5.1]{APS},
which apart from this tiny change is the present Lemma~\ref{martConc},
makes no use of this assumption.)
The second version, for use when we come to choosing $L_w^0$'s
(in Section~\ref{L0HighProbProof}), assumes:

\begin{itemize}
\item $[m] = U \times [\ell]$ ($U$ some set), ordered lexicographically;
\item for $i = (z,k)$, $\Omega_i$ depends only on $z$ (so we write $\Omega_z$);
\item each $\omega_{z,k}$ is uniform from $\Omega_z \setminus \{\omega_{z,1}, \cdots, \omega_{z,k-1}\}$,
and is independent of all $\omega_{x,j}$ with
$x \neq z$;
\item $X(\omega)$ depends only on the \emph{sets} $(\{\omega_{z,1}, \cdots, \omega_{z,\ell}\} : z \in U)$;
\item for each $z \in U$ we are given $\Sigma_z \subseteq \Omega_z$ and for $i = (z,k)$ set 
$\Sigma_i = \Sigma_z$.
\end{itemize}

\nin
We will refer to these two situations as (A) and (B) respectively. The following lemma applies to both.
\begin{lemma}\label{martConc}
If, for all $i \in [m]$, $\eps_i^0, \eps_i^1 \in \Omega_i$,
and $\eps_j \in \Omega_j$ (for $j \neq i$),
\beq{lipschitzPlus}
|X(\eps_1, \cdots, \eps_{i-1}, \eps_i^0, \eps_{i+1}, \cdots, \eps_m) - X(\eps_1, \cdots, \eps_{i-1}, \eps_i^1, \eps_{i+1}, \cdots, \eps_m)|
\begin{cases}
\leq c, \\
= 0 \ \ \ \text{ if } \eps_i^0, \eps_i^1 \notin \Sigma_i,
\end{cases}
\enq
and
\beq{unlikelyActive}
\sum_i \pr(\omega_i \in \Sigma_i | \omega_j = \eps_j \text{ for } j < i) \leq M,  
\enq
then for any $\lambda \in [0,16cM]$,
\beq{martBound}
\max\{\pr(X-\E X > \lambda), \pr(X - \E X < -\lambda) \} < \exp \left[ -\frac{\lambda^2}{32 c^2 M} \right].
\enq
\end{lemma}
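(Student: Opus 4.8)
The plan is to run the standard Azuma--Hoeffding-type martingale argument applied to the Doob martingale $X_i = \E[X \mid \go_1 \dots \go_i]$, but with the refinement — as in \cite[Sec.~3]{aglc} — that the per-step increment is controlled not by a flat Lipschitz constant $c$ but by a ``variance-like'' bound that is nonzero only when $\go_i$ lands in the active set $\Sigma_i$. Concretely, write $X_0 = \E X$ and $X_m = X$, with differences $d_i = X_i - X_{i-1}$. The conditioning setup — a product measure in case (A), and in case (B) the slightly-less-independent structure where within each block $z$ the $\go_{z,k}$ are sampled without replacement but $X$ depends only on the unordered sets — is exactly the setup for which \cite[Sec.~3]{aglc} establishes the needed one-step estimate, so I would quote that lemma rather than reprove it. (Case (B) is handled there precisely because $X$ is a function of the sets, which makes the within-block order a ``hidden'' symmetry the martingale doesn't see.)

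The key steps, in order. First, bound $|d_i| \le c$ always, and $d_i = 0$ whenever the ``pivotal'' coordinate value is outside $\Sigma_i$ — this is immediate from \eqref{lipschitzPlus}: changing $\go_i$ within $\Omega_i \setminus \Sigma_i$ doesn't change $X$, so after conditioning on $\go_1 \dots \go_{i-1}$, the conditional law of $X_i$ (hence of $d_i$) differs from a point mass only on the event $\{\go_i \in \Sigma_i\}$, whose conditional probability I'll call $p_i = p_i(\go_1\dots\go_{i-1})$. Second, the standard exponential-moment step: for $|s| \le 1/(2c)$ or so, $\E[e^{s d_i} \mid \cF_{i-1}] \le \exp[O(s^2 c^2 p_i)]$, using $\E[d_i\mid\cF_{i-1}] = 0$, $|d_i| \le c$, and $\pr(d_i \ne 0 \mid \cF_{i-1}) \le p_i$ together with the elementary inequality $e^x \le 1 + x + x^2$ for $|x|\le 1$ (absorbing constants into the ``$32$''). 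Third, multiply along $i = 1 \dots m$ and take the total expectation: the product of the conditional MGF bounds telescopes into $\exp[O(s^2 c^2 \sum_i p_i)]$, and here is where hypothesis \eqref{unlikelyActive}, $\sum_i p_i \le M$ (note this is a deterministic upper bound on the random sum, valid pointwise), enters to give $\E e^{s(X - \E X)} \le \exp[O(s^2 c^2 M)]$. Fourth, optimize Markov's inequality $\pr(X - \E X > \gl) \le e^{-s\gl}\E e^{s(X-\E X)}$ over admissible $s$; the constraint $|s| \le \Theta(1/c)$ caps the usable deviation at $\gl = \Theta(cM)$, which is exactly the stated range $\gl \in [0, 16cM]$, and yields $\exp[-\gl^2/(32 c^2 M)]$ after bookkeeping the constants. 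The lower-tail bound $\pr(X - \E X < -\gl)$ is identical with $-X$ in place of $X$ (the hypotheses are symmetric in $\eps_i^0, \eps_i^1$, so $-X$ satisfies them with the same $c, \Sigma_i, M$).

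The main obstacle — really the only place requiring care — is making the one-step MGF estimate genuinely rigorous in case (B), where the coordinates within a block are not independent of each other. The honest move is not to re-derive this but to note that \cite[Sec.~3 / Lemma~5.1]{aglc} already packages exactly the two scenarios (A) and (B) (the remark in the excerpt flags that the cited proof from \cite{APS} uses no more than what's quoted, and in particular does not need $\go_i$ uniform, only the martingale-difference and active-set structure); so the ``proof'' is mostly a matter of checking that our hypotheses \eqref{lipschitzPlus}--\eqref{unlikelyActive} are literally the hypotheses of that lemma, with the bookkeeping constant matched to $32$. A secondary, purely cosmetic point is that \eqref{unlikelyActive} bounds $\sum_i p_i$ by $M$ uniformly over the conditioning, so no further concentration of $\sum_i p_i$ is needed — the pointwise bound feeds directly into the telescoped product.
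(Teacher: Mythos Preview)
Your proposal is correct and matches the paper's treatment: the paper does not prove this lemma but simply refers the reader to \cite[Sec.~5]{APS} for a guide to extracting it from \cite[Sec.~3]{aglc}, which is exactly the reference you invoke. One small slip worth noting (harmless for the conclusion): it is not literally true that $d_i = 0$ when $\go_i \notin \Sigma_i$---rather $d_i$ equals some constant $a_i$ with $|a_i| \le p_i c$ there, since $X_{i-1}$ averages in the $\Sigma_i$ contribution---but your subsequent ``point mass'' description is the right picture and the MGF bound $\E[e^{sd_i}\mid\cF_{i-1}] \le \exp[O(s^2 c^2 p_i)]$ goes through unchanged up to constants.
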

\nin
(The restriction on $\gl$ will never be a problem, and 
will usually be left to the reader.)
See \cite[Sec.\ 5]{APS} for discussion and a quick 
guide to extracting Lemma~\ref{martConc} from
\cite[Sec.\ 3]{aglc}.

\section{Concentration}\label{Concentration}
The eventual goal here, addressed in Section \ref{getGoodTsigma},
is establishing \eqref{ideal1} and \eqref{ideal2};
but we first, in Section \ref{L0HighProbProof}, dispose of Lemma \ref{LPh1a},
which underlies the analysis for large $n$.

In Section \ref{L0HighProbProof} 
and the part of Section \ref{getGoodTsigma} concerning small $n$,
we aim for union bounds over sets of vertices of bounded size,
so want to say that each of various desired events fails
with probability $n^{-\omega(1)}$; in these cases we use ``w.v.h.p."
to mean with probability $1 - n^{-\omega(1)}$.
In this section, ``proof of \eqref{equivProbs}" (e.g.) is
short for ``proof that \eqref{equivProbs} holds with suitable probability".

As often with martingale concentration, formal justifications can take a while
even when intuition is clear, so a reader might profitably skip most of this section;
e.g.\ cursory examinations of the 
proofs of \eqref{summablySimilarFraternals}
(beginning on the line before \eqref{Yvuw}) and
\eqref{ideal1} for large $n$
(beginning a couple paragraphs before \eqref{TNv'}) should be more than enough
to see the ideas here.

\subsection{Proof of Lemma \ref{LPh1a}}\label{L0HighProbProof}
This is all based on regime (B) of Lemma~\ref{martConc}.
Here we will always have
\[
U \subseteq V, \,\,
\Omega_z = S_z
\]
and 
\beq{goz}
\{\omega_{z,1}, \cdots, \omega_{z,\ell}\} = L_z^0 ;
\enq
and may use the lemma with any convenient
\beq{mgConcBM}
M \ge ~ \ell \sum_{z \in S} |\Sigma_z| / (D - \ell),
\enq
(the summand in (\ref{unlikelyActive})
for $i = (z,k)$ being at most $|\Sigma_z|/(D-k+1)$).

We use $N_\gamma^0(v) = \{w \sim v : \gamma \in L_w^0\}$,
with $N_\gamma^0(W)$ the analogue of $N(W)$ and $d_\gamma^0(\cdot) = |N_\gamma^0(\cdot)|$.
This section will see frequent appearances of $1-1/(D+1)$, which we continue to abbreviate:
\beq{gzz}
\gzz := 1-1/(D+1).
\enq
\begin{obs}\label{binomPowerExpect}
Fix $\gamma \in \Gamma$. Let
$W \subseteq \{v \in V : \gamma \in S_v\}$
and $W^0 = \{v \in W : \gamma \in L_v^0\}.$ Then
\[
\E'\left[(1 - \ell^{-1})^{|W^0|}\right] = \gzz^{|W|}.
\]
\end{obs}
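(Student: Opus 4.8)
The plan is to compute $\E'[(1-\ell^{-1})^{|W^0|}]$ directly by conditioning on which vertices of $W$ place $\gamma$ into their chosen list. Since the lists $L_v^0$ are chosen independently across $v$, and for each $v\in W$ we have $\gamma\in S_v$ (so $\gamma$ is eligible), the events $\{\gamma\in L_v^0\}$ for $v\in W$ are mutually independent. For a single such $v$, $L_v^0$ is a uniform $\ell$-subset of the $(D+1)$-set $S_v$, so $\pr'(\gamma\in L_v^0)=\ell/(D+1)$. Hence $|W^0|$ is a sum of $|W|$ i.i.d.\ Bernoulli$(\ell/(D+1))$ random variables, i.e.\ binomial with parameters $|W|$ and $\ell/(D+1)$.

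First I would write $\E'[(1-\ell^{-1})^{|W^0|}] = \prod_{v\in W}\E'[(1-\ell^{-1})^{\mathbf 1\{\gamma\in L_v^0\}}]$ using independence. Then for each factor,
\[
\E'\left[(1-\ell^{-1})^{\mathbf 1\{\gamma\in L_v^0\}}\right]
= \frac{D+1-\ell}{D+1}\cdot 1 + \frac{\ell}{D+1}\cdot\left(1-\frac1\ell\right)
= \frac{D+1-\ell}{D+1} + \frac{\ell-1}{D+1}
= \frac{D}{D+1}
= \gzz.
\]
Multiplying over the $|W|$ vertices of $W$ gives $\gzz^{|W|}$, as claimed. (Equivalently, one can invoke the probability generating function of the binomial: $\E[s^{\mathrm{Bin}(m,p)}]=(1-p+ps)^m$ with $m=|W|$, $p=\ell/(D+1)$, $s=1-\ell^{-1}$, which evaluates to the same thing.)

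There is essentially no obstacle here; this is a routine generating-function computation. The only points worth stating carefully are (i) that $\gamma\in S_v$ for every $v\in W$, so that $\pr'(\gamma\in L_v^0)=\ell/(D+1)$ exactly (not smaller), and (ii) the cross-vertex independence of the list choices, which is exactly the setup of regime (B). The slightly surprising-looking exponent $1-\ell^{-1}$ on the left is precisely engineered so that the Bernoulli average telescopes the $\ell$'s away and leaves the clean $\gzz=1-1/(D+1)$; it is natural to flag this cancellation but nothing more is needed.
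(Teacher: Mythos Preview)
Your proof is correct. The paper takes a different, more conceptual one-line route: it observes that $(1-\ell^{-1})^{|W^0|}$ is precisely $\pr''(\gc\notin\tau(W))$ (the Step~2 probability, given $L^0$, that no $\tau_v$ with $v\in W$ equals $\gc$), while $\gzz^{|W|}$ is $\pr(\gc\notin\tau(W))$ (the same probability in the one-step model where $\tau_v$ is uniform from $S_v$); the identity then drops out of the law of total expectation $\E'\pr''(\cdot)=\pr(\cdot)$. Your approach computes the expectation directly via independence and the binomial generating function. Both are short and valid; the paper's version explains \emph{why} the constant $1-\ell^{-1}$ is the right one (it is a conditional probability), whereas your version makes the cancellation explicit but treats it as an algebraic coincidence.
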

\begin{proof}
This is just 
$~\E' \pr''(\gc\not\in\tau(W)) = \pr(\gc\not\in\tau(W)),$
an instance of \eqref{LOTE}.
\end{proof}

\nin \textit{Proof of \eqref{equivProbs}.}
Since $\E'[\pr''(v \in T)] = \pr(v \in T) \sim e^{-1}D$ (see \eqref{LOTE} and \eqref{inTeq})
we just need concentration of
\beq{concP''}
\pr''(v\in T) = \sum_{\gamma \in L_v^0} \left[ \ell^{-1} (1 - \ell^{-1})^{d_\gamma^0(v)} 
\gzz^{D - d_\gamma(v)} \right].
\enq
We first show the expected value of this probability doesn't depend on $L_v^0$:

\nin \textit{Claim:} For each $\Sigma \in {S_v \choose \ell}$,
\beq{conditionalEqual}
\E'[\pr''(v \in T) | L_v^0 = \Sigma] = \E'[\pr''(v \in T)] \ \ (= \pr(v \in T) =\gzz^D).
\enq
(In \cite{APS} this just follows from symmetry.)

\nin \textit{Proof.}
Recalling \eqref{concP''} and using
linearity of expectation, we find that the l.h.s.\ of \eqref{conditionalEqual} is
\beq{lofE}
%\E'[\pr''(v \in T) | L_v^0 = \Sigma] = 
\sum_{\gamma \in \Sigma} \ell^{-1} 
\gzz^{D - d_\gamma(v)}\cdot
\E'[(1 - \ell^{-1})^{d_\gamma^0(v)}].
\enq
(Note $d_\gamma^0(v)$ doesn't depend on $\gS$.)
On the other hand, Observation \ref{binomPowerExpect}
(used with $W = N_\gamma(v)$) says
\[
\E'[(1 - \ell^{-1})^{d_\gamma^0(v)}] = \gzz^{d_\gamma(v)},
\]
and inserting this in \eqref{lofE} gives, as desired,
$\E'[\pr''(v \in T) | L_v^0 = \Sigma] =\gzz^D$.
\qed

\mn

So for \eqref{equivProbs} it is enough to show concentration of $\pr''(v \in T)$ given 
$L_v^0=\gS$ (for any $\gS$).  
Given $L_v^0=\gS$, the sum in \eqref{concP''} is a function of the events
$\{\gc\in L_y^0\}$,
with 
$\gc$ running over $\gS$ and $y$ over $N_z$.
We use Lemma~\ref{martConc} (regime (B)) with $X=\pr''(v \in T)$, $\go$ as in \eqref{goz},
and
\beq{SSgS}
\mbox{$U = N_v, \  \ \Omega_z = S_z, \ \ \Sigma_z = \Sigma \cap S_z, \ \ c = \ell^{-2}\,$ 
and $\, M = 2\ell^2$.}
\enq
[Here \eqref{lipschitzPlus}
follows from \eqref{concP''}:
changing $\go_i=\go_{z,k} $ from $\gc$ to $\gd$ has no effect if $\gc,\gd\not\in \gS$ 
and otherwise may increase and/or decrease a single summand in \eqref{concP''}
by at most $\ell^{-1}[1-(1-\ell^{-1})]$.
For \eqref{unlikelyActive} note that 
the r.h.s.\ of
\eqref{mgConcBM} is at most
$\ell \cdot D \cdot 2\ell/D$, i.e., is bounded by the present $M$.]
Then for any $\lambda = \omega(\ell^{-1/2})$ the lemma gives
\[   
\pr'(|\pr''(v \in T) - \pr(v \in T)| > \lambda | L_v^0 = \Sigma)
< e^{-\omega(\ell)} = n^{-\omega(1)},
\]
so taking $\lambda$ to be $\omega(\ell^{-1/2})$ 
and $o(1)$ gives
\eqref{equivProbs}.

\qed

\mn 
\textit{Proof of \eqref{summablySimilarFraternals}.}
Set $S_{uw} = S_u \cap S_w$
and $L_{uw}^0 = L_u^0 \cap L_w^0.$
For $uw \in \overline{G}[N_v]$ let
\beq{Yvuw}
Y^v_{uw} = \sum_{\gamma \in S_{uw}} \pr''(E_{uw,\gamma}^v),
\enq
\beq{deffvuw}
f_{uw}^v = f_{uw}^v(L_{uw}^0) 
= \E'[Y^v_{uw}  | L_{uw}^0]
\enq
(note that once we know $L_{uw}^0$ the sum in \eqref{Yvuw} 
might as well be over $\gc \in L_{uw}^0$), 
and
\beq{Xv}
X_v = \sum_{uw \in \overline{G}[N_v]} f_{uw}^v.
\enq
By \eqref{LOTE} and linearity
of expectation,
\beq{fullSumItExp}
\sum_{\gamma \in S_{uw}} \pr(E_{uw,\gamma}^v)
~= ~\E'[Y^v_{uw} ] ~= ~ \E' \{\E'[Y^v_{uw} | L_{uw}^0 ]\} 
~= ~\E'[f_{uw}^v].
\enq
So the l.h.s.\ of \eqref{summablySimilarFraternals} is
\begin{align}
\nonumber \left|
\sum_{(uw,\gamma) \in F_v} 
\left[
\pr''(E_{uw,\gamma}^v) - \pr(E_{uw,\gamma}^v)
\right]
\right| &= \left|
\sum_{uw \in \overline{G}[N_v]}
(Y^v_{uw}- \E'[f_{uw}^v])
\right| \\
\label{fraternalInnOut} &\leq
\sum_{uw \in \overline{G}[N_v]}
|
Y^v_{uw}- f_{uw}^v
|
+ \left|
X_v - \E' X_v
\right|,
\end{align}
and we just need to show
\beq{fraternalInnOutSmall}
\text{each of the two parts of \eqref{fraternalInnOut}
is } o(D) \text{ w.v.h.p.}
\enq

For the first part, we can prove a stronger ``local'' version:  it is enough to show,
for some $\gl=o(1/D)$ and any (relevant) $uw$ and $\gS$,
\beq{Evuwtoshow}
\pr'(|Y^v_{uw}- f_{uw}^v(\Sigma)| > \lambda
\ | \ L_{uw}^0 = \Sigma ) =n^{-\go(1)}.
\enq

Set 
\[    %\beq{defJuwgam0}
J_{uw,\gamma}^{v,0} = N_\gamma^0(u,w,v) \setminus \{u,w\}
\,\,\,\,(= \{z\in J_{uw,\gamma}^v: \gc\in L^0_z\}; \,\text{see \eqref{defJuwgam}}).
\]   %\enq
Fix $uw \in \overline{G}[N_v]$.  With $Y^v_{uw}$ as in \eqref{Yvuw}, 
we have (cf.\ \eqref{prEuw'})
\beq{uwPpp}
Y^v_{uw} \, 
= \sum_{\gamma \in L_{uw}^0} \ell^{-2} 
(1 - \ell^{-1})^{|J_{uw,\gamma}^{v,0}|}\cdot
\gzz^{2D - d_\gamma(u) - d_\gamma(w)}.
\enq
Conditioning on $\{L_{uw}^0 = \Sigma\}$, we apply
Lemma~\ref{martConc} (B) with $X=Y^v_{uw}$
and
\beq{mgParamsFraternalOuter}
U = N(u,w,v) \setminus \{u,w\}, \ 
\Omega_z = S_z, \ 
\Sigma_z = \Sigma \cap S_z, \ 
c = \ell^{-3} \ \text{ and }
M = 4\ell^2.
\enq
[Justification of 
\eqref{lipschitzPlus} and \eqref{mgConcBM} for these parameters
is similar to the discussion following \eqref{SSgS}.]
Inserting these in the lemma 
(and recalling from \eqref{deffvuw} that
$f_{uw}^v(\gS) = \E'[X| L_{uw}^0=\gS]$),
we have for the l.h.s.\ of \eqref{Evuwtoshow}
(and $\lambda \leq 64/\ell$)
\[
\pr'(|Y^v_{uw}- f_{uw}^v(\Sigma)| > \lambda
\ | \ L_{uw}^0 = \Sigma )  <
\exp[-\Omega(\lambda^2 \ell^4)].
\]
Taking $\gl$ to be 
both $\omega(\ell^{-3/2})$ and $o(1/D)$
(as we may since we assume \eqref{defLarge})
thus gives \eqref{Evuwtoshow}.

\mn

We turn to the second part
of \eqref{fraternalInnOut}.  
Let
\[
F_v^0 = \left\{(uw,\gamma) \in \overline{G}[N_v]
\times \Gamma : \gamma \in L_{uw}^0 \right\}.
\]
Recalling \eqref{deffvuw} and the expression for $Y^v_{uw}$ in \eqref{uwPpp}, 
and using Observation \ref{binomPowerExpect}
with $W = J_{uw,\gamma}^v$,
we have
\beq{binomAppTofProbs}
X_v ~= \sum_{uw \in \overline{G}[N_v]} f_{uw}^v 
~= \sum_{uw \in \overline{G}[N_v]}
\E' [ Y^v_{uw}~|~
L_{uw}^0 ] 
~= \sum_{(uw,\gamma) \in F_v^0}
\ell^{-2} \cdot\gzz^{|J_{uw,\gamma}^v| + 2D-d_\gamma(u) - d_\gamma(w)}.
\enq
We apply Lemma~\ref{martConc} (B)
with $X = X_v$ and
\[
U = N_v, \ \Omega_z = \Sigma_z = S_z,\ 
c = D \ell^{-2} \ \text{ and } \ M = D\ell.
\]
[Justification for \eqref{lipschitzPlus} is again similar to that following \eqref{SSgS};
here changing $\go_{z,k} $ can add and/or delete at most $D-1$ terms 
from the sum on the r.h.s.\ of \eqref{mgConcBM}, and the summands are at most $\ell^{-2}$.
And $D\ell$ bounds the r.h.s.\ of \eqref{mgConcBM} 
since each of the $D$ terms in the sum there is at most 1.]
This gives (for $\lambda \leq 16D^2 \ell^{-1}$)
\[
\pr'(|X_v - \E' X_v| > \lambda) < \exp\left[-\Omega(\lambda^2 (\ell/D)^3)\right],
\]
and taking $\lambda$ both 
$\omega(D^{3/2} \ell^{-1})$ and $ o(D)$
(again allowed by \eqref{defLarge})
gives the second part of 
\eqref{fraternalInnOutSmall}.

\qed

\nin \textit{Proof of \eqref{summablySimilarAliens}.}
Set
\beq{defKw}
K_w = \bigcup_{\gamma \in S_w \setminus S_v} K_{w, \gamma}
\,\,\,\,(=\{w\in T, \tau_w\not\in S_v\});
\enq
so
\[
\pr(K_w) = \sum_{\gamma \in S_w \setminus S_v} \pr(K_{w,\gamma})
\]
and similarly with $\pr$ replaced by $\pr''$.
Let
\beq{defew}
k_w = k_w(L_w^0) = \E' \left[ \pr''(K_w) | L_w^0 \right]
\enq
(for $w \in N_v$) and
\[
Y_v = \sum_{w \sim v} k_w.
\]
Since (again by \eqref{LOTE})
\[
\pr(K_w) = \E'\left\{ \E' [\pr''(K_w) | L_w^0] \right\} = \E'[k_w],
\]
we may rewrite the l.h.s.\ of \eqref{summablySimilarAliens} as
\begin{align}
\nonumber 
\left|
\sum_{w \sim v} \left(\pr''(K_w) - \E'k_w\right)
\right| 
&= \left|
\sum_{w \sim v} (\pr''(K_w) - k_w + k_w - \E' k_w)
\right| \\
\label{alienInnOut} &\leq \sum_{w \sim v} |\pr''(K_w) - k_w| + |Y_v - \E' Y_v|,
\end{align}
so just need to show
\beq{alienInnOutSmall}
\text{each of the two parts of \eqref{alienInnOut} is $o(D)$ \text{ w.v.h.p.}}
\enq

For the first part, we again aim for a ``local'' version:  
for some $\gl=o(1)$ and $\gSR\sub S_w $ of size $\ell$,
\beq{Evuwtoshow'}
\pr'(|\pr''(K_w) - k_w(R)| > \lambda | L_w^0 = \gSR)=n^{-\go(1)}.
\enq
Noting that
\beq{probppKw}
\pr''(K_w) = \sum_{\gamma \in S_w \setminus S_v} \ell^{-1} (1 - \ell^{-1})^{d_\gamma^0(w)} 
\cdot \gzz^{D - d_\gamma(w)},
\enq
we condition on $\{L_w^0 = \gSR\}$ and
apply Lemma~\ref{martConc}
with $X = \pr''(K_w)$ and
\[
U = N_w,\  \Omega_z = S_z,\  \Sigma_z = R \cap (S_z \setminus S_v),\  c = \ell^{-2},\  
\text{ and } M = 2\ell^2.
\]
[Here \eqref{lipschitzPlus} follows from \eqref{probppKw} and the summands in
\eqref{mgConcBM} are trivially less than $2\ell/D$.]
This gives (now for \emph{any} $\lambda \geq 0$, since for $\gl>1$ the l.h.s.\ is zero)
\[
\pr'(|\pr''(K_w) - k_w(R)| > \lambda | L_w^0 = R) < \exp[-\Omega(\lambda^2 \ell^2)],
\]
and taking $\lambda$ to be both $\omega(\ell^{-1/2})$ and $o(1)$
gives \eqref{Evuwtoshow'}.

\mn

For the second part of \eqref{alienInnOut}, 
recall that $A_v$ was defined in \eqref{defAv} and set 
$A_v^0 = \{(w,\gamma) \in A_v : \gamma \in L_w^0\}$.
Applying Observation~\ref{binomPowerExpect} to the $N_{\gamma}(w)$'s gives
\[
Y_v = |A_v^0| \ell^{-1} \gzz^D,
\]
a function of $L^0|_{N_v}$.
(Note that $d_\gc^0(w)$ is independent of membership 
of $(w,\gc)\in A_v^0$.)  
Applying Lemma~\ref{martConc}~(B) (excessive here, where
$|A_v^0| =\sum_{w\sim v}|L_w^0\cap (S_w\sm S_v)|$ is a sum of independent
hypergeometrics)
with
\[
U = N_v,\  \Omega_z = \Sigma_z = S_z,\  c = \ell^{-1},\  \text{ and } M = 2D\ell
\] 
gives
\[
\pr'(|Y_v - \E' Y_v| > \lambda) < \exp[-\Omega(\lambda^2 \ell/D)];
\]
so taking $\lambda$ to be both
$\omega(D^{1/2}) $ and $o(D)$ gives
the second part of \eqref{alienInnOutSmall}. 

\qed

\mn

\nin \textit{Proof of \eqref{L0smallFootprint}.}
Fix $v, Y, \Gamma_0$ as in (\ref{L0smallFootprint}).
For the quantity of interest, $X = \sum_{y \in Y} |L_y^0 \cap \Gamma_0|$,
we have
\[
\E'X \leq \theta D \cdot \theta D/C \cdot \ell/D = \theta^2 D \ell/C,
\]
so again just need concentration.
Applying Lemma~\ref{martConc} (B) with
\[
U = Y,\  \Omega_y = S_y,\  \Sigma_y = \Gamma_0 \cap S_y,\  c = 1,\  \text{ and } M = |Y|\ell \cdot |\Gamma_0|/(D-\ell) < 2 \theta^2 D \ell/C,
\]
gives
\beq{unlikelyBigFootprint}
\pr'(X > \E'X + \theta^2 D \ell / C) < \exp[-\Omega(\theta^2 D \ell/C)] = e^{-\omega(D)},
\enq
where the r.h.s.\ uses
$C \theta^{-2} = o(\ell)$
(see following \eqref{CtoThetaBig}).
On the other hand, the number of possibilities for $(v,Y,\Gamma_0)$ is at most
\[
n \C{D}{\theta D} \C{\theta D (D+1)}{\leq \theta D/C} ,
\]
which with \eqref{unlikelyBigFootprint} bounds the probability that \eqref{L0smallFootprint}
does not hold by 
\[
n 2^D O(CD)^{\theta D/C} e^{-\Omega(\theta^2 D \ell/C)} = e^{-\omega(D)} = n^{-\omega(1)},
\] 
where the first ``='' uses 
$\log(CD) = o(\theta \ell)$;  again, see following \eqref{CtoThetaBig}.

\qed

\nin \textit{Proof of \eqref{fewBystanders}.}
Fix $(v,\pi)$ and let $X = \sum_{w \not\sim v} |\pi(N_w \cap N_v) \cap L_w^0|$
(the sum in \eqref{fewBystanders}),
noting that
\[
\E' X \leq\ \sum_{w \not\sim v} |\pi(N_w \cap N_v)| \ell/(D+1)
< \sum_{w \not\sim v} |N_w \cap N_v| \ell/D \,  \leq \, D\ell.
\]

\nin
(Of course $w$'s at distance more than $2$ from $v$
play no role here.) For Lemma~\ref{martConc} (B) we take
\[
U = \{w : \text{dist}(v,w) \in \{0,2\}\},\  \Omega_z = S_z,\  \Sigma_z = \pi(N_z \cap N_v) \cap S_z,\  c = 1, 
\]
and
\[
M ~=~ 2 D \ell ~\geq~ \ell \sum_{w \not\sim v} |N_w \cap N_v|/(D -\ell)
~\geq ~\ell \sum_{w \not\sim v} |\pi(N_w \cap N_v)|/(D -\ell),
\]
an upper bound on the r.h.s.\ of \eqref{mgConcBM}.
This gives
\[
\pr'(X > 2D\ell) < \exp[-\Omega(D \ell)],
\]
and \eqref{fewBystanders} follows, since there are only $n(D+1)^D$ possibilities for $(v,\pi)$
(and $n$ is large). \qed

\subsection{Properties \eqref{ideal1} and \eqref{ideal2}}\label{getGoodTsigma}
This section proves \eqref{small23} and Lemma~\ref{LC0}.
Here we will find it convenient to take $V = [n]$ and \emph{sometimes}
use $i,j,\cdots$ instead of $v,w,\cdots$ for vertices.

We treat small and large $n$ in parallel.
When dealing with large $n$, we are given $L_v^0$'s, which we may
(and do) assume satisfy \eqref{equivProbs}-\eqref{fewBystanders}.

For $\pi: N_v \to \Gamma$ with $\pi_w \in S_w$ for all $w$
($\pi$ will later be $\tau |_{N_v}$),
let
\beq{defZvtau}
Z(v,\pi)  = \{z \in N_v : |\pi^{-1}(\pi_z) \cap N_v| < C\},
\enq
and say $(v,\pi)$ is \emph{bad} if $|Z(v,\pi)| < (1-\theta)D$
(and \emph{good} otherwise).

Before attacking \eqref{ideal1} and \eqref{ideal2},
we show that bad $(v,\tau |_{N_v})$'s 
(for $\tau$ see \eqref{whereTauvXiv} for small $n$
and \textit{Step 2} above \eqref{CL0} for large)
are unlikely; this will be used to control the differences
in \eqref{lipschitzPlus} of Lemma~\ref{martConc}.
We show, for both small and large $n$ and any $v$,
\beq{vtaubad}
\pr(\mbox{$(v,\tau|_{N_v})$ is bad}) =e^{-\go(D)}.
\enq

\begin{proof}
Fix $v$ and notice that if $(v,\tau|_{N_v})$ is bad
then there are $Y \in {N_v \choose \theta D}$
and 
\[
\Gamma_0 \in {\cup_{y \in Y} S_y \choose \theta D/C}
\]
with 
$\tau(Y) \subseteq \Gamma_0$; thus for small $n$,
\begin{align}
\label{vTNvUnlikelyBadsmalln}
\pr((v,\tau|_{N_v}) \text{ bad}) &< {D \choose \theta D} %choose Y
{\theta D (D+1) \choose \theta D/C} %choose \Gamma_0 from \cup_{y \in Y} S_y
\left(\frac{\theta D}{(D+1)C} \right)^{\theta D} \\
\nonumber &< \left[(e/\theta) (e C D)^{1/C} (\theta/C) \right]^{\theta D} \\
\nonumber &= [O(C^{-1} D^{1/C})]^{\theta D} = e^{-\omega(D)}
\end{align}
(using \eqref{CtoThetaBig}),
while for
large $n$, \eqref{L0smallFootprint}  allows us to replace the last term on the r.h.s.\ of \eqref{vTNvUnlikelyBadsmalln} by
\[
\prod_{y \in Y} \frac{|L_y^0 \cap \Gamma_0|}{\ell} \leq 
\left(\frac{1}{\theta D} \sum_{y \in Y} \frac{|L_y^0 \cap \Gamma_0|}{\ell} 
\right)^{\theta D}
\leq \left(\frac{2\theta}{C}\right)^{\theta D}.
\]

%%%%%%%%%
\iffalse
\begin{align}
\label{vTNvUnlikelyBadLargen}
\pr''((v,\tau|_{N_v}) \text{ bad}) &< \sum_{Y} \sum_{\Gamma_0} 
\prod_{y \in Y} \frac{|L_y^0 \cap \Gamma_0|}{\ell} \\
\nonumber &\leq {D \choose \theta D} {\theta D(D+1) \choose \theta D/C} 
\left(\frac{1}{\theta D} \sum_{y \in Y} \frac{|L_y^0 \cap \Gamma_0|}{\ell} 
\right)^{\theta D} \\
\nonumber &= [O(C^{-1} D^{1/C})]^{\theta D} = e^{-\omega(D)}.
\end{align}
\fi
%%%%%%%
\end{proof}

For the rest of this section we fix
$v \in V^*$ (as mentioned earlier,
this restriction isn't needed
for \eqref{ideal1}); let
$\tau = (\tau_1, \cdots, \tau_n)$, with the $\tau_i$'s
independent and
\[
\tau_i \text{ uniform from } \begin{cases}
S_i \ \ \ \text{ if } n \text{ is small,} \\
L_i^0 \ \ \text{ if } n \text{ is large;}
\end{cases}
\]
and let $\xi = (\xi_1, \cdots, \xi_n)$
with the $\xi_i$'s as in \eqref{tauUpProb}.
For convenience we assume $N_v = [D]$
and use whichever of $N_v$, $[D]$
seems more appropriate, always using
$\tau_{[D]}$ and $\xi_{[D]}$ for the restrictions
of $\tau$ and $\xi$ to $N_v$.

The arguments in the rest of this section 
are applications of regime (A) of Lemma~\ref{martConc}.  
For these we will always take
$\Omega = \prod_{i \in [n]} \Omega_i$,
noting that most $i$'s---those
with dist$(v,i) > 2$---will never affect
the $X$ in question; such $i$'s will have
$\Sigma_i = \emptyset$, so won't affect $M$
(see \eqref{unlikelyActive}) or the bound \eqref{martBound}.
(We could instead take $\Omega = \prod_{i \in [m]} \Omega_i$,
with $[m] = \{i : \text{dist}(v,i) \leq 2\}$.)

\nin \textit{Proof of \eqref{ideal1} for small $n$.}
Let 
\[
X = X(\tau, \xi) = |T \cap Z| = \sum_{i \in N_v} X_i,
\]
where $Z = Z(v,\tau_{[D]})$ (see \eqref{defZvtau}) and $X_i = \mathbbm{1}_{\{i \in T \cap Z \}}$.

We first observe that
\beq{correctExpect}
\E X \sim e^{-1} D
\enq
and that we will have \eqref{ideal1}, precisely,
\beq{TNv}
\text{w.v.h.p. } |T \cap N_v| \sim e^{-1} D, 
\enq
if we can show
\beq{goodColorConcsmalln}
\text{w.v.h.p. } |X - \E X| = o(D).
\enq
[Because: with $U = N_v \setminus Z$,
%the combination of \eqref{vTNvUnlikelyBadsmalln}
%and \eqref{CtoThetaBig} 
\eqref{vtaubad}
and $\theta =o(1)$ (and $\E|U| < \theta D + D ~\cdot \pr(\mbox{$(v,\tau|_{N_v})$ is bad})$) 
give
\beq{fewLeftovers}
\E|U| \lesssim \theta D \ \text{ and w.v.h.p. } |U|  \le \theta D,
\enq
which, since $0 \leq|T \cap N_v| - X \leq |U|$,
implies (\ref{correctExpect}) (see (\ref{expectTinNbd})) and
\beq{closeToGoodCount}
\text{w.v.h.p. } ||T \cap N_v| - X|  \le \theta D.]
\enq

\nin \textit{Proof of \eqref{goodColorConcsmalln}.}
Notice (with justification to follow) that if
$\tau_i' = \beta$ and $(\tau_j,\xi_j) = (\tau_j',\xi_j') \ \forall j \neq i$,
then
\beq{lipschitzForWellColoreds}
X(\tau, \xi) - X(\tau', \xi') \leq \begin{cases}
C \ \text{ (always),} \\
0 \ \text{ if } i > D \text{ and } i \not\sim \tau^{-1}(\beta) \cap Z.
\end{cases}
\enq
(Note $Z$ is determined by $\tau_{[D]}$, so its appearance here makes sense.)
This is true because, with $X = X(\tau,\xi)$,
$X' = X(\tau',\xi')$, and $j \in [D]$, we can only have $X_j > X_j'$
(i.e. $X_j' = 0, X_j = 1$) if
\begin{itemize}
\item $i=j$, or
\item $i \sim j$, $\tau_j = \beta$, and $|\tau^{-1}(\beta) \cap (N_v \setminus \{i\})| < C$, or
\item $i \leq D$, $\tau_j = \beta$, and $|\tau^{-1}(\beta) \cap (N_v \setminus \{i\})| = C-1$.
\end{itemize}
For Lemma~\ref{martConc} (regime (A)) we take
\begin{itemize}
\item $\Omega_i = S_i \times \{0,1\}$, $\omega_i = (\tau_i, \xi_i)$;
\item for $i \in [D]$, $\Sigma_i = \Omega_i$ 
(so the summand in (\ref{unlikelyActive}) is $1$);
\item for $i > D, \Sigma_i = \Sigma_i(\tau_{[D]}) = (\tau(N_i \cap N_v) \cap S_i) \times \{0,1\}$
(here $N_v$ could be $\{x \in N_v : \xi_x = 1\}$); and
\item $c = C$.
\end{itemize}
Then (\ref{lipschitzPlus}) is given by (\ref{lipschitzForWellColoreds}),
and for $i>D$,
\beq{activeBound}
\pr(\omega_i \in \Sigma_i) = |\tau(N_i \cap Z) \cap S_i|/(D+1) \leq |N_i \cap Z|/D,
\enq
implying that the sum in (\ref{unlikelyActive}) is at most
\[
D + D^{-1} \sum_{i > D} |N_i \cap N_v| \leq 2D =: M.
\]
Thus, finally, Lemma~\ref{martConc} gives
\[
\pr(|X - \E X| > \lambda) = \exp[-\Omega(\lambda^2/(C^2 D))];
\]
so taking $\gl$ to be both $\omega(C (D \log n)^{1/2})$ and $o(D)$
(possible since $n$ is small and, as promised following
\eqref{CtoThetaBig}, $C=D^{o(1)}$), we have \eqref{goodColorConcsmalln}.

\qed

\nin \textit{Proof of \eqref{ideal1} for large $n$.}
Recall that the $\tau_i$'s are now drawn uniformly
from the $L_i^0$'s, which are assumed to satisfy
(\ref{equivProbs})-(\ref{fewBystanders})
the $\xi_i$'s are still as in (\ref{tauUpProb}),
and 
we aim for Lemma~\ref{LC0}; that is, we just
need to say, using the Local Lemma (Lemma \ref{LLL}), that with
\emph{positive} probability (\ref{ideal1})
holds for all $v \in V^*$. (So, we are applying the lemma
to events $A_v = \{|T \cap N_v| \neq (1\pm \eta) e^{-1} D\}$, with $\eta>0$ an
arbitrarily small constant or a slow enough $o(1)$.
Again, the restriction to
$V^*$ is unnecessary here.)

\nin

%%%%%%%%%%
\iffalse
\cha{[An attempt to spell out how we're applying LLL]
The application of Lemma \ref{LLL} works as follows.
We take $m = n$, and the bad events $(A_i : i \in [m])$ are indexed
by vertices of $V$.
With an implicit $\iota = o(1)$,
for $i \in V$ we take
$$A_i = \{|T \cap N_v| \neq (1\pm \iota) e^{-1} D\}.$$
Put $i \sim_{\frak{G}} j$ if $\text{dist}_G(i,j) \leq 4$:
then, $A_i$ is independent (under $\pr''$) of all $A_j$ for which 
$i \not\sim_{\frak{G}} j$, since the underlying $(\tau_v, \xi_v)_{v \in V}$ are independent
and $A_i$ is determined by $(\tau_v, \xi_v)_{\text{dist}_G(v,i) \leq 2}$. Thus
$$\Delta_{\frak{G}} \leq \max_{v \in V} |\{w \in V : \text{dist}_G(v,w) \leq 4\}| \leq D^4.$$
We will show
$$\pr''(A_i) \leq p := D^{-5}\,\, \forall i \in V,$$
giving $e p (\Delta_{\frak{G}} +1) <1$ with plenty of room.
Thus the hypothesis of Lemma \ref{LLL} is satisfied, and we get
$$\pr''(\cap \bar{A_i}) = \pr''(\forall v \in V,\,\, |T \cap N_v| = (1\pm \iota)e^{-1} D) > 0.$$
We now use \emph{w.f.h.p.} (f = fairly) for ``with probability
at least $1 - D^{-5}$."}
\fi
%%%%%%%%%%%%%%%

Thus, since each event in (\ref{ideal1}) is independent
of all but at most $D^4$ others (since each is determined
by the restriction of $(\tau,\xi)$ 
to vertices within distance $2$
of $v$), even failure probabilities below $D^{-5}$
suffice here (though we will do much better),
so we now use \emph{w.f.h.p.} (f = fairly) for 
``with probability at least $1 - D^{-5}$."
The arguments here are similar to those for small $n$,
so we mainly indicate differences. 

As before,
we have $v \in V = [n]$, $N_v = [D]$,
and $\tau = (\tau_1, \cdots, \tau_n)$, and set
\[
X = X(\tau, \xi) = |T \cap Z| = \sum_{z \in N_v} X_z, 
\]
where $Z = (v,\tau_{[D]})$ and $X_z = \mathbbm{1}_{\{z \in T \cap Z\}}$.
The desired counterpart of \eqref{TNv} is 
\beq{TNv'}
\text{w.f.h.p. } |T \cap N_v| \sim e^{-1} D, 
\enq
for which it is enough to show
\beq{goodColorConcLargen}
\text{w.f.h.p. } |X - \E'' X| = o(D).
\enq
[The rationale for ``enough to show" follows that for \eqref{goodColorConcsmalln}
(with each $\E$ replaced $\E''$):  the reason for \eqref{fewLeftovers} is unchanged,
and
we add \eqref{equivProbs} to \eqref{expectTinNbd} in justifying
\eqref{correctExpect}.
(``W.v.h.p." is still okay in \eqref{fewLeftovers}
and the $\E''$-counterpart of \eqref{correctExpect},
but not in \eqref{goodColorConcLargen} and \eqref{TNv'}.)]

We again have (\ref{lipschitzForWellColoreds})
(with the same justification); to repeat:
\beq{lipschitzAgain}
X(\tau, \xi) - X(\tau', \xi') \begin{cases}
\leq C \\
\leq 0 \ \text{ if } i > D \text{ and } i \not\sim \tau^{-1}(\beta) \cap Z
\end{cases}
\enq
whenever $\tau_i' = \beta$
and $(\tau_j, \xi_j) = (\tau_j', \xi_j') $ for $j \neq i$.

Parameters for Lemma~\ref{martConc} (A) 
are also as before, except $S_i$ changes to $L_i^0$;
%$\Omega_i$ changes from $S_i \times \{0,1\}$ to $L_i^0 \times \{0,1\}$; 
thus:
\begin{itemize}
\item $\Omega_i = L_i^0 \times \{0,1\}$, $\omega_i = (\tau_i, \xi_i) \ (\forall i)$;
\item for $i \in [D]$, $\Sigma_i = \Omega_i$;
\item for $i > D$, $\Sigma_i = \Sigma_i(\tau_{[D]})
= \tau(N_i \cap N_v) \times \{0,1\}$; and
\item $c = C$.
\end{itemize}
The biggest (if still minor) change is that
the $|\tau(N_i \cap Z) \cap S_i|/(D+1)$ of (\ref{activeBound})
becomes $|\tau(N_i \cap Z) \cap L_i^0|/\ell$;
that is, for $i>D$,
\[
\pr''(\omega_i \in \Sigma_i) = |\tau(N_i \cap Z) \cap L_i^0|/\ell,
\]
so the sum in (\ref{unlikelyActive})
is now at most
\[
D + \ell^{-1} \sum_{i > D} |\tau(N_i \cap Z) \cap L_i^0| < 3D =: M,
\]
with the inequality given by (\ref{fewBystanders}).
Thus Lemma~\ref{martConc} again gives
\[
\pr''(|X - \E X| > \lambda) = \exp[-\Omega(\lambda^2 / (C^2 D))],
\]
so $|X - \E X| = o(D)$
w.f.h.p. (and more, but not quite ``w.v.h.p." since
$\omega(C (D \log n)^{1/2}) =\lambda = o(D)$ may be impossible when $n$ is large.) 

\qed

\nin \textit{Proof of \eqref{fraternalGetSlack}
for small and large $n$.} There is nothing new
here, so we keep it brief.

Fix $v$ and let
\[
X = |\{(uw,\gamma) \in F_v : E_{uw,\gamma}^v \text{ holds}\}|.
\]
For small $n$, \eqref{expectManyFraternalEvents} says 
\[
\E X \gtrsim e^{-3} \vt D/2,
\]
and if $n$ is large then \eqref{expectManyFraternalEvents}
and \eqref{summablySimilarFraternals} give
\[
\E'' X \gtrsim e^{-3} \vt D/2. 
\]
It is thus enough to show
\beq{fraternalConc}
\begin{cases}
|X - \E X| = o(D) &\text{ w.v.h.p. } \ \text{ if } n \text{ is small,} \\
|X-\E''X|  = o(D) &\text{ w.f.h.p. } \ \text{ if } n \text{ is large.}
\end{cases}
\enq

The arguments here are nearly identical to those
for \eqref{goodColorConcsmalln} and \eqref{activeBound} above, the only difference being
that $C$ ($=c$) is replaced by $1$.
(In words: changing a single $(\tau_i, \xi_i)$ can't change
the present $X$ by more than $1$.)
Thus Lemma~\ref{martConc} gives (slightly improving the earlier
bounds)
\[
\pr(|X - \E X| > \lambda) = \exp[-\Omega(\lambda^2/D)]
\]
and
\[
\pr''(|X  - \E'' X| > \lambda) = \exp[-\Omega(\lambda^2/D)],
\]
and (\ref{fraternalConc}) follows. 

\qed

\nin \textit{Proof of \eqref{alienGetSlack}
for small and large $n$.} Again there is nothing new here.

Fix $v$ and let
\[
X = |\{w \sim v : \mbox{$K_w$ holds}\} \cap Z|.
\]
(Recall $K_w$ and $Z$ were defined in \eqref{defKw}
and \eqref{defZvtau}, and note---referring to the definition of $A_v$ in \eqref{defAv}---that
$X$ is a lower bound on the l.h.s.\ of \eqref{alienGetSlack}.)
By \eqref{expectManyAlienEvents}
and (\ref{fewLeftovers})
if $n$ is small,
\[
\E X \gtrsim 2^{-1} e^{-1} \vt D;
\]
and by (\ref{expectManyAlienEvents}),
(\ref{summablySimilarAliens}), and
(\ref{fewLeftovers})
if $n$ is large,
\[
\E'' X \gtrsim 2^{-1} e^{-1} \vt D;
\]
so it is again enough to show
\beq{alienConc}
\begin{cases}
|X - \E X| = o(D) \ &\text{ w.v.h.p. if } n \text{ is small,} \\
|X-\E''X| =  o(D) \ &\text{ w.f.h.p. if } n \text{ is large.} 
\end{cases}
\enq
The arguments here are again like those for \eqref{goodColorConcsmalln} and \eqref{activeBound}, 
now with $c=C$ and Lemma~\ref{martConc} again yielding
\[
\pr(|X - \E X| > \lambda) = \exp[-\Omega(\lambda^2/(C^2 D))]
\]
and
\[
\pr''(|X - \E'' X| > \lambda) = \exp[-\Omega(\lambda^2/(C^2 D))];
\]
so appropriate choices of $\gl=o(D)$ again give \eqref{alienConc}.

\qed

\section{Clusters}\label{Clusters}

We now assume $V^*$ has been colored by $\gs$, and choose the ``real'' lists $L_v$ 
for $v\in \cup C_i$.
(Thus, as promised, we discard the earlier ``dummy'' colors.
We again note that the following analysis is valid for \emph{any} $\gs:V^*\ra\gG$.
As noted at the end of Section~\ref{Sketch}, we now \emph{occasionally}---only where 
the distinction matters---use $\DD$ for $|\gG|$ ($=D+1$) in place of the slightly incorrect $D$.)

With $V_i =V^*\cup \cup_{j\le i}C_j$, 
it is enough to show that for $i=1\ldots $ and any coloring $\gs$ of $V_{i-1}$
(note we continue to use $\gs$ for the growing coloring),
\beq{notext}
\pr(\mbox{$\gs$ does not extend to $C_i$}) =o( D/n).
\enq
From now on we use \emph{w.h.e.p.}\ (with high enough probability)
for ``with probability at least $1-o(D/n)$.''

\mn

\nin
\emph{Note on parameters.}
For the rest of our discussion, $\eps$, though fixed, may best be thought of as tending slowly to 0.  
We will use $a\ll b$ as a finitary counterpart of $a=o(b)$, meaning:  
for small enough $\eps$ we can
arrange that any use of 
$a\ll b$ promises that $a/b$ is small enough to support our arguments.
We then replace  ``$\sim$''
by ``$\approx$'';
e.g.\
$a\approx b$ means $|a-b|\ll a$.
Several of our small parameters will be explicit functions of $\gd$ that our ``finitary''
notation will treat as constants; so if $\gd'$ is one of these then $a\ll b$ and $a\ll \gd' b$
are equivalent.  

For ease of reference we list a few parameters here.  Those given explicitly in terms of $\gd$ are
\[    %\beq{parameters}
\mbox{$\vr =\gd/10$ (defined at \eqref{vrb}), \,\,  $\theta = e^{-9/\gd}$ (see \eqref{theta})
\,\,and \,$\nu_0 =\vr/2$ (see \eqref{nu0}).}
\]   %\enq
Other parameters that may be worth listing here are 
$\bbb$ (an ``absolute'' constant not depending on $\gd$, defined in \eqref{EDepsD}), and
$\gz_0$ (not a constant; see \eqref{gzeps}), the benchmark for the central 
$\gz$ which is ``small'' or ``large'' depending on how it compares to $\gz_0$.
Finally, it may be worth noting that we are now done with the sparse phase, and
the meanings of symbols recycled here (e.g.\ $\zeta, \theta,$ and $C$) are unrelated to 
their earlier ones.

We also recall from ``Usage'' in Section~\ref{Sketch} that all our assertions are for sufficiently large $D$,
and that the implied constants in
$O(\cdot)$, $\gO(\cdot)$, $\Theta(\cdot)$
do not depend on our various parameters.

\mn

Write $C$ for $C_i$, recalling that
(for convenience slightly relaxing from Lemma~\ref{LACK})
\beq{EDepsD}
\mbox{$||C| -\DD| < \bbb\eps D;~$ $~C$ has external degrees and
internal \emph{non}-degrees at most $\bbb\eps  D$,}
\enq  
with $\bbb$ fixed (e.g.\ $\bbb=7$).
We assume throughout that
\beq{Csize}
|C|=\DD+x
\enq
(so $|x|< \bbb \eps D$) and
say $C$ is \emph{large} if $|C|> \DD$
and \emph{small} otherwise.
In what follows $\vvv,\www$ are always
in $C$ and $\gb,\gc$ are always in $\gG$.
We also set (for $v\in C$)
\beq{nabla}
\nabla_v =|\nabla_G(v,V\sm C)| \,\, (< \bbb \eps D).
\enq

Let $H=\ov{G}[C]$, so
\beq{dH}
d_H(v) =\nabla_v +x,
\enq
and set  
\beq{|H|}
|H| =\gz D^2.
\enq
Note that, by \eqref{EDepsD}, 
\beq{gzeps}
\gz =O(\eps),
\enq
and that %for $x>0$, we have
\beq{smolx}
x \leq 2\gz D
\enq
(since $d_H(v)\geq x$ $\forall v\in C$ implies $\gz D^2 =|H|\geq |C|x/2$).

Let $\gz_0$ satisfy 
\beq{epszeta}
\eps \ll \gz_0 D \ll 1.
\enq
and 
say $\gz$ is \emph{small} if $\gz < \gz_0$ and \emph{large} otherwise.

\mn

We use $T_\vvv$ for the
set of still-allowed colors at $\vvv$; that is,
$T_\vvv=\{\gc\in S_\vvv: \vvv\not\sim \gs^{-1}(\gc)\}$.
Notice that 
\beq{SxTx}
|S_\vvv\sm T_\vvv|\leq \nabla_v%\in C
\enq
(with equality iff $\gs$ assigns $\nabla_v$ distinct colors from $S_v$
to the neighbors of $\vvv$ outside $C$).

A first easy observation, which 
will allow us to confine our attention to the sublists
$L_\vvv\cap T_\vvv$, is:
\beq{NKK'}
\mbox{w.h.e.p. $\,|L_\vvv\sm T_\vvv| < 0.5 \gd \log n \,\,\,\, \forall \vvv$.}
\enq

\nin
\emph{Notation.}
In what follows we will often use $\ttt $ for $\log n$.

\mn
\emph{Proof of \eqref{NKK'}.}
The cardinality in \eqref{NKK'}, say $\xi$, is hypergeometric with mean
$|S_\vvv\sm T_\vvv|(1+\gd)\log n/D = O(\eps D) \ttt/D$, so
Theorem~\ref{Cher'} gives
\beq{firsttheta}
\pr(\xi> 0.5 \gd t) < [O(\eps/\gd)]^{0.5\gd t} =o( n^{-1}).
\enq
(This assumes $\eps $ is somewhat less than $ e^{-2/\gd}$, but it will eventually
be considerably smaller.)
\qed

\mn

So
we may assume we have the inequalities in \eqref{NKK'}.
To simplify notation we then recycle, writing $L_\vvv$ and $\gd$ for what were $L_\vvv \cap T_\vvv$ 
and $0.5\gd $.  Thus with $\kkk = (1+\gd)\log n$ (with our revised $\gd$), \emph{we 
assume for the rest of our discussion that}
\beq{LxTx}
\mbox{\emph{$L_\vvv$ is chosen uniformly from} $\C{T_\vvv}{k}$}
\enq
(for all $\vvv\in C$,
these choices independent).

\nin

\mn

In each of the regimes below
we will want to show w.h.e.p. existence of certain matchings in subgraphs of the (random) 
bigraph on $C\cup \gG$ with edges $\{(\vvv,\gc):\gc\in  T_\vvv\}$;
this will be partly based on the following two lemmas.

For $B$ bipartite on $\UUU \cup \VVV$ and distinct $\gb,\gc \in \VVV,$ $B^{\gb,\gc}$ 
is the bigraph gotten from $B$
by replacing $N_B(\gb)$ and $N_B(\gc)$ by 
$N_B(\gb) \cup N_B(\gc)$ and $ N_B(\gb) \cap N_B(\gc)$ (respectively).
We refer to this (asymmetric) operation as \emph{switching $(\gb,\gc)$}.
Given a bigraph $B$ We use $\oB$ for its bipartite complement  
and $N(\vvv)$ for $N_B(\vvv)$.  (As usual $r^+ := \max\{r,0\}$.)

\begin{lemma} \label{nestineq'}
Let $F$ be bipartite on $\UUU \cup \VVV$; $\gb,\gc \in \VVV;$ and $F' = F^{\gb,\gc}.$
Let $(t_\vvv : \vvv \in \UUU)$ be natural numbers with $t_\vvv \leq d_F(\vvv)$, and
let $L$ and $L'$ be random subgraphs of $F$ and $F'$ (resp.), with $L$ gotten  by choosing neighborhoods
$N_L(\vvv)$ ($\vvv\in \UUU$) independently, each uniform from $\C{N_F(\vvv)}{t_\vvv}$,
and $L'$ gotten similarly, with $N_{F'}$ in place of $N_F$. 

Then
\[
\pr(\mbox{$L'$ admits a $\UUU$-perfect matching})\leq \pr(\mbox{$L$ admits a $\UUU$-perfect matching}) .
\]
\end{lemma}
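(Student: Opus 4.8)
The plan is to build a coupling of $L$ and $L'$ under which every $\UUU$-perfect matching of $L'$ forces one of $L$; the asserted inequality follows at once.

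Two preliminaries. First, switching preserves $\UUU$-side degrees: a vertex adjacent in $F$ to both of $\gb,\gc$ (resp.\ to neither) is adjacent in $F'$ to both (resp.\ neither), and a vertex adjacent in $F$ to exactly one of them is adjacent in $F'$ to exactly one — namely $\gb$. Hence $d_{F'}(\vvv)=d_F(\vvv)$, so the hypothesis $t_\vvv\le d_F(\vvv)$ is meaningful for $F'$ too, and $N_F(\vvv)\setminus\{\gb,\gc\}=N_{F'}(\vvv)\setminus\{\gb,\gc\}$ for every $\vvv$. Second, one may reveal and fix the common ``rest'' of the two random graphs — all edges to colors other than $\gb,\gc$, together with, for each $\vvv$, the \emph{size} of $N_L(\vvv)\cap\{\gb,\gc\}$ (which has the same law in $L$ and $L'$) — before the $\{\gb,\gc\}$-traces are decided. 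Conditioned on this data, existence of a $\UUU$-perfect matching in $L$ (or $L'$) reduces to whether some set $W$ of at most two ``leftover'' vertices, namely those matched to $\gb$ or $\gc$, can be matched into $\{\gb,\gc\}$; and the family of candidate $W$'s (those $W$ for which the revealed part covers $\UUU\setminus W$) is the same for $L$ and $L'$. This localizes everything to the two colors $\gb,\gc$.

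For the coupling of the traces I would keep the traces of vertices adjacent to $\gb$ only, and of vertices adjacent to neither color, identical; couple the trace of a vertex adjacent to $\gc$ only by the bijection swapping $\gc\leftrightarrow\gb$ (so such a vertex ``has'' $\gc$ in $L$ exactly when it ``has'' $\gb$ in $L'$); and couple the traces of the vertices adjacent to \emph{both} colors jointly with the rest — this is legitimate because in each of $L$ and $L'$ separately those traces are independent of everything else — permuting the roles of $\gb$ and $\gc$ for them in whatever way the leftover-vertex structure demands. Under such a coupling $L$ and $L'$ agree on every edge to a color $\neq\gb,\gc$ and differ on the $\{\gb,\gc\}$-edges only through a structure-controlled relabelling of $\gb$ and $\gc$. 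Given a $\UUU$-perfect matching $M'$ of $L'$, either $M'\subseteq E(L)$ and we are done, or one deletes the (at most one) edge of $M'$ missing from $L$ — under the simplest coupling this is an edge $(\www,\gb)$ with $\www$ adjacent in $F$ to $\gc$ but not $\gb$ — to get a matching $N$ of $L$ missing only $\www$, observes that $(\www,\gc)\in E(L)$, and tries to complete $N$ to a $\UUU$-perfect matching of $L$ by the usual alternating-path argument from $\www$.

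\textbf{The main obstacle} is exactly that completion step: the alternating search from $\www$ may pass through $\gc$, out along a matched edge, and through several further vertices before closing, so one must rule out its getting stuck. I plan to do this by König duality: a stuck search exhibits $X\ni\www$ with $|N_L(X)|=|X|-1$, with $\gb\notin N_L(X)$, and with $\gc$ reachable from $X$ only via vertices adjacent to both colors; feeding the coupling's edge identities into this one computes $N_{L'}(X)$ and checks that $|N_{L'}(X)|<|X|$ as well — intuitively because the switch has moved $\gc$'s contribution to $N(X)$ over onto $\gb$ without enlarging it — contradicting Hall's condition for $L'\supseteq M'$. Pinning down the coupling of the ``both-colors'' vertices (equivalently, choosing $M'$) so that no such $X$ arises is where essentially all the difficulty sits; the remaining steps are bookkeeping.
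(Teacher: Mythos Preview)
The paper does not actually prove this lemma: it says only that it ``is a variant of \cite[Lemma~7.4]{APS}, and is proved in exactly the same way,'' referring to \cite{Kenney} for details. So there is no in-paper argument to compare against, and I can only assess your proposal on its own merits.

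Your reduction is correct and is the natural first move. After conditioning on the common graph $G_0$ on colors $\neq\gb,\gc$ and on all trace sizes, the only remaining randomness (in both $L$ and $L'$) is a fair coin for each Case~4 vertex of trace size $1$; the deterministic parts differ exactly as you describe. The difficulty is also where you say it is. But the argument you sketch to resolve it does not go through.

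Under the ``simplest'' coupling (identity on Case~4 vertices, $\gc\mapsto\gb$ on Case~3 vertices), suppose the alternating search from your $w$ (Case~3, the vertex whose edge $(\www,\gb)\in M'$ is missing from $L$) gets stuck, exhibiting $X\ni\www$ with $|N_L(X)|=|X|-1$ and $\gb\notin N_L(X)$. Passing to $L'$ one always gains $\gb$ in $N_{L'}(X)$ (since $\www\in X$), and one loses $\gc$ from $N_{L'}(X)$ \emph{only if} no vertex of $X$ lies in $N_{L'}(\gc)$. Now $\gb\notin N_L(X)$ forces $X$ to miss every Case~4 vertex of trace $\{\gb\}$ or $\{\gb,\gc\}$, but it does \emph{not} force $X$ to miss Case~4 vertices of trace $\{\gc\}$; if $X$ contains such a vertex, $\gc$ stays in $N_{L'}(X)$, giving $|N_{L'}(X)|=|X|$ --- no contradiction with Hall for $L'$. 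Concretely: take $\UUU=\{q,w\}$, $\VVV=\{\gb,\gc\}$, $N_F(q)=\{\gc\}$, $N_F(w)=\{\gb,\gc\}$, $t_q=t_w=1$. Under the identity coupling for $w$, the outcome $N_L(w)=\{\gc\}$ gives $L'$ a $\UUU$-perfect matching ($q\to\gb$, $w\to\gc$) while $L$ has none.

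You acknowledge that ``pinning down the coupling of the both-colors vertices \dots\ is where essentially all the difficulty sits,'' and that is accurate: the proposal is a correct diagnosis rather than a proof. What is needed is either a concrete bijection $\phi$ on the coin space (allowed to depend on $G_0$ and on which vertices fall in each case) such that a $\UUU$-perfect matching in $L'(\phi(\epsilon))$ forces one in $L(\epsilon)$ --- in the toy example above the right $\phi$ swaps $w$'s coin, but you have not said how to choose $\phi$ in general --- or a counting argument that bypasses coupling altogether. As written, this central step is missing.
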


\begin{lemma}\label{LMg'A}
Let $B$ be bipartite on $\UUU\cup Z$, $|\UUU|=\JJJ = (1\pm 1.1\theta) D$,
and
\beq{dFx}
d_B(\vvv)= \JJJ+R-r(\vvv) \leq \DD \,\,\,\,\forall \vvv\in \UUU
\enq 
(note $r(v)$ can be negative), with

\nin
{\rm (a)}  $R=0\,$ if $\gz$ is small, 

\nin
{\rm (b)}   $\theta D \ge R\gg \max\{1,\gz D\}\,$ 
if $\gz$ is large,

\nin
and  
\beq{r(x)''}
\mbox{$r^+(\vvv) =O(\eps D) \,\,\forall \vvv\in \UUU\,\,$ and
$\,\,
\sum_{\vvv\in \UUU}r^+(\vvv) \leq 2\gz D^2$.}
\enq
Fix $\vs \geq \gd/4$ and let $(t_\vvv:\vvv\in \UUU)$ be positive integers,
each at least $(1+\vs)\log n$.
For each $\vvv\in \UUU$ choose $\NNN_\vvv $ uniformly from $\C{N_B(\vvv)}{t_\vvv}$,
these choices independent,
and let 
\[
K = \{(\vvv,\gc): \vvv\in \UUU, \gc\in \NNN_\vvv\}.
\]
Then 
$K$ contains a $\UUU$-perfect matching w.h.e.p.
\end{lemma}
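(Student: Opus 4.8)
The plan is to prove the matching exists w.h.e.p.\ by verifying Hall's condition (for $\UUU$) for the random bigraph $K$, using a union bound over ``obstruction'' sets and two complementary estimates depending on the size of the putative violating set $W\sub \UUU$. First I would record the union-bound scheme: $K$ has no $\UUU$-perfect matching iff there is $W\sub \UUU$ with $|N_K(W)|<|W|$, equivalently there is a minimal such $W$ together with a set $\gG_0\sub \gG$ with $|\gG_0|=|W|-1$ and $\NNN_\vvv\sub \gG_0$ for all $\vvv\in W$; so
\[
\pr(\mbox{no $\UUU$-perfect matching in $K$}) \leq \sum_{w=1}^{\JJJ}\,\sum_{|W|=w}\,\sum_{|\gG_0|=w-1}\,\prod_{\vvv\in W}\pr(\NNN_\vvv\sub \gG_0).
\]
For a single $\vvv$ with $|\gG_0\cap N_B(\vvv)|=a$ we have $\pr(\NNN_\vvv\sub \gG_0)=\binom{a}{t_\vvv}/\binom{d_B(\vvv)}{t_\vvv}\leq (a/d_B(\vvv))^{t_\vvv}\leq (a/d_B(\vvv))^{(1+\vs)\ttt}$, where $\ttt=\log n$; and here $d_B(\vvv)=\JJJ+R-r(\vvv)$ is within $O(\eps D)$ (plus the $R$-shift) of $\DD$, so in particular $d_B(\vvv) = (1-O(\theta))D$ in case (a) and $d_B(\vvv)=(1+\Theta(R/D))D$-ish in case (b), in any case $\gO(D)$.

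Next I would split on the size of $W$. \textbf{Small $W$} (say $|W|=w\le \nu D$ for a suitable small constant $\nu$, or more precisely $w$ up to a constant fraction of $D$): here for each $\vvv\in W$ we have $a=|\gG_0\cap N_B(\vvv)|\le |\gG_0|=w-1$, so each factor is at most $((w-1)/d_B(\vvv))^{(1+\vs)\ttt}\le (w/(\gO(D)))^{(1+\vs)\ttt}$, and
\[
\sum_{|W|=w}\sum_{|\gG_0|=w-1}\prod_{\vvv\in W}\pr(\NNN_\vvv\sub\gG_0)\;\le\; \binom{\JJJ}{w}\binom{|\gG|}{w-1}\left(\frac{w}{\gO(D)}\right)^{(1+\vs)w\ttt}\;\le\;\left[e^2\Big(\tfrac{D}{w}\Big)^{2}\Big(\tfrac{w}{\gO(D)}\Big)^{(1+\vs)\ttt}\right]^{w},
\]
using $|\gG|=\go(1)\cdot D$ only through $\log|\gG|=O(\log D)$ — wait, we must be careful: $|\gG|$ can be huge, but $\binom{|\gG|}{w-1}\le |\gG|^{w}\le (\mbox{poly}(n))^w$, and since $w\ge 1$ the factor $(w/\gO(D))^{(1+\vs)\ttt}$ carries a $\ttt=\log n$ in the exponent, which beats any fixed power of $n$ once $w/D$ is bounded away from $1$ by a constant. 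Concretely: for $w\le (1-\vs/2)D$ say, $(w/(c D))^{(1+\vs)\ttt}\le n^{-(1+\vs)}\cdot n^{o(1)}$ after absorbing the $\binom{\JJJ}{w}$ and $\binom{|\gG|}{w-1}$ terms — each contributes at most $n^{O(1)}$ to the base raised to the $w$, so for $w=1$ we already get $o(D/n)$ with room, and larger $w$ only help. This disposes of $W$ of size up to $(1-\vs/2)D$.

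\textbf{Large $W$} ($w\ge (1-\vs/2)D$, so $w$ is within a constant factor of $D=\JJJ$, i.e.\ $\JJJ-w=O(D)$): now the naive bound $a\le w-1$ is useless since $w-1$ can exceed $d_B(\vvv)$. Instead I would use that $\vvv$ has \emph{non}-neighbors: $|\oB$-degree$|=|\gG\sm N_B(\vvv)|=|\gG|-d_B(\vvv)$, so $a=|\gG_0\cap N_B(\vvv)|\le |\gG_0|-|\gG_0\sm N_B(\vvv)|$, but more usefully $a\le d_B(\vvv)-|N_B(\vvv)\sm \gG_0|$ and $|N_B(\vvv)\sm\gG_0|\ge d_B(\vvv)-(w-1)\ge d_B(\vvv)-\JJJ+1$. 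In case (a), $d_B(\vvv)=\JJJ-r(\vvv)$, so $|N_B(\vvv)\sm\gG_0|\ge 1-r(\vvv)$, which is only useful when $r(\vvv)$ is small; the point is that by \eqref{r(x)''}, $\sum_\vvv r^+(\vvv)\le 2\gz D^2$, so for $\gz$ small (case (a)) most $\vvv$ have $r^+(\vvv)$ tiny and hence $a/d_B(\vvv)\le 1-\gO((w-1-\JJJ+r(\vvv))^+/D)$... — here is the real crux, and I think the cleanest route mirrors the ``deficiency version of Hall'' / isolated-vertex argument: rather than Hall on $W$, dualize and bound $\pr(\exists\,\gc\in\gG$ with small $K$-degree among a near-spanning set$)$, or — better — use the structure $R$ is designed for. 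In case (b) the surplus $d_B(\vvv)=\JJJ+R-r(\vvv)$ with $R\gg\gz D$ means $|N_B(\vvv)\sm\gG_0|\ge R-r(\vvv)+1\ge R/2$ for all but $O(\gz D^2/R)=o(D)$ exceptional $\vvv$ (by the $\sum r^+\le 2\gz D^2$ bound and $R\gg\gz D$); so for a typical $\vvv$, $a/d_B(\vvv)\le 1-(R/2)/\DD = 1-\gO(R/D)$, giving $\pr(\NNN_\vvv\sub\gG_0)\le e^{-\gO(R t_\vvv/D)}=e^{-\gO(R\ttt/D)}$, and raising to the $w=\gO(D)$ power and comparing with $\binom{\JJJ}{w}\binom{|\gG|}{w-1}=\exp[O((\JJJ-w)\log|\gG|)]=\exp[O(D\log n)]$ — this needs $R\ttt/D\cdot D = R\ttt$ to beat $D\log n$, i.e.\ $R\gg D$, which is \emph{false} (we only have $R\le\theta D$). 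So the large-$W$ regime cannot be handled by a crude union over all $\gG_0$; instead one must restrict $\gG_0$ to lie (essentially) inside a common color-set, or equivalently, I expect the genuine argument goes: for large $W$ the complement $\UUU\sm W$ is small, and the relevant event is that the colors \emph{not} hit by $W$ number fewer than $\JJJ-w$; by inclusion on $\gc$, $\pr(\gc\notin\bigcup_{\vvv\in W}\NNN_\vvv)=\prod_{\vvv\in W}(1-\pr(\gc\in\NNN_\vvv))$ and $\pr(\gc\in\NNN_\vvv)= t_\vvv/d_B(\vvv)\cdot\mbone_{\gc\in N_B(\vvv)}\ge (1+\vs)\ttt/\DD$ for $\gc\in N_B(\vvv)$, so for $\gc$ in $\DD-O(\eps D)$-many of the $\vvv\in W$, $\pr(\gc$ missed$)\le (1-(1+\vs)\ttt/\DD)^{(1-O(\eps))w}\le \exp[-(1+\vs/2)\ttt\cdot w/D]=n^{-(1+\vs/2)w/D}$; when $w\ge(1-\vs/4)D$ this is $\le n^{-(1+\vs/8)}$, so the expected number of missed colors is $o(1)$ and a first-moment/Markov bound over the (few) colors relevant to $\UUU\sm W$ — combined with $|\UUU\sm W|=o(D)$ not forcing too many distinct missed colors — closes it. This ``few missed colors'' computation, and reconciling it with exactly which colors must be covered to matching-saturate $\UUU$ (it suffices that $|\bigcup_{\vvv\in W}\NNN_\vvv|\ge|W|$ for \emph{every} $W$, handled above for small $W$, and for large $W$ that the number of colors in $N_B(W):=\bigcup_{\vvv\in W}N_B(\vvv)$ not hit is $<|W\sm(\mbox{stuff})|$), is where I expect to spend the real effort; the small-$W$ union bound is routine, and the role of the hypothesis $R\gg\max\{1,\gz D\}$ in case (b) is precisely to guarantee $d_B(\vvv)$ is large enough (and the exceptional set from $\sum r^+\le 2\gz D^2$ small enough, namely $o(D)$) that the per-color miss probability $n^{-(1+\gO(1))}$ survives multiplication by the number of candidate colors. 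So: \textbf{the main obstacle is the large-$W$ / near-spanning regime}, which must be done via a first-moment bound on uncovered colors rather than a Hall union bound, and making the two cases' thresholds ($w\lessgtr (1-\vs/4)D$, roughly) match up while tracking the $r^+(\vvv)$ exceptions and the $R$-surplus.
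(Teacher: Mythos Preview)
Your plan correctly identifies the Hall/union-bound skeleton, and your instinct to dualize for large $W$ matches what the paper eventually does (via \eqref{HCB}). But there is a structural gap: you attempt to prove Lemma~\ref{LMg'A} directly, whereas the paper's proof is a \emph{reduction} to the simpler Lemma~\ref{LMg.simple} via Corollary~\ref{Cnesting} (the nesting consequence of the switching Lemma~\ref{nestineq'}). The point of that reduction is that in Lemma~\ref{LMg'A} as stated, $|Z|$ is \emph{not bounded}; the nesting lemma lets one assume each $\vvv\in \UUU$ is adjacent to an initial segment of $Z$, then restrict to the first $\JJJ+R$ vertices of $Z$ (showing w.h.e.p.\ this costs at most $(\vs/2)\log n$ from each list). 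Only then does one have $|Z|=\JJJ+R=\Theta(D)$, which is essential for the union bounds over $\gG_0\sub Z$. Without it your small-$W$ argument breaks in the middle range: for $w=\Theta(D)$ the factor $\binom{|Z|}{w-1}$ can be $|Z|^{\Theta(D)}$ with $|Z|$ unbounded by hypothesis; and your claim that $(w/(cD))^{(1+\vs)\ttt}\le n^{-(1+\vs)}$ is arithmetically wrong --- it equals $n^{-(1+\vs)\log(cD/w)}$, which for $w$ a constant fraction of $D$ is only $n^{-\Theta(1)}$ with a constant that need not exceed~$1$.

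For large $W$, your first-moment bound on missed colors is the right instinct but, as written, does not deliver the required $o(D/n)$ uniformly over the $\exp[\Theta(D)]$ choices of $W$ in the near-spanning range, and it does not handle case~(a) ($R=0$, no surplus to absorb the exceptional $\vvv$'s with large $r^+(\vvv)$). The paper's Lemma~\ref{LMg.simple} treats this side via the dual Hall condition \eqref{HCB} on subsets of $Z$, together with an either/or split: either at least $a$ vertices of $Z$ are isolated in $K$, or there is a structured witness $A\sub Z\sm I$ with small $K$-neighborhood (see \eqref{either}--\eqref{or}). The choice of $a$ differs between (a) and (b), and the hypotheses $R\gg\max\{1,\gz D\}$ (in (b)) and $\gz D\ll 1$ (in (a)) enter precisely through the inequality $|\oB|\ll \JJJ a$ needed to control the ``isolated'' case.
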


Lemma~\ref{nestineq'} is a variant of \cite[Lemma~7.4]{APS},
and is proved in exactly the same way as that lemma,
so we will not repeat the proof here (but see \cite{Kenney}).
Lemma~\ref{LMg'A}, a substantial extension of \cite[Lemma~7.3]{APS},
is proved in Section~\ref{Matchings}.

\mn

We return to $C$.
Notice that $\gz $ small implies $C$ is small (since $\gz > 1/(2D)$ if $C$ is large).
In this case we set $C'=C$, $\gG'=\gG$, and jump to the paragraph 
following Lemma~\ref{Lsucc}.
We suppose until then that $\gz$ is large.
%, and let $|C|=\DD+x$.  

Let $F$ be the bigraph on $C\cup \gG$ corresponding to the $T_v$'s
(so with edge set $\{(v,\gc): v\in C, \gc\in T_v\}$),
and $K$ the random subgraph corresponding to the $L_v$'s.
We use $d_\gc$ for the degree of $\gc\in \gG$ \emph{in $F$}.

Set 
\beq{vrb}
\vr = \gd/10, \,\,\, b= D/(1+\vr)
\enq 
and
\beq{pee.def}
\pee =\{\gc: d_\gc> b\},
\enq
noting that (using $|C|< (1+\bbb \eps )D$ and $\eps \ll \vr$)
\beq{Psmall}
|\pee|<  |C|\DD/b < (1+2\vr)D.
\enq
Set $\U=\gG\sm \pee$, writing
$d_\U(v) $ for $ |N_F(v)\cap \U|$,
and let 
\beq{theta}
\theta = e^{-9/\gd}.
\enq

\nin
Our treatment of $C$ depends on whether
\beq{R1}
\sum\{|T_\vvv\cap T_w\cap \pee|:\vvv\www\in H\}> \theta \gz D^3.
\enq

\nin
Situations where \eqref{R1} holds 
%(which in \cite{APS} were the only possibility for large $\gz$) 
will be treated below, and until further notice we assume
it does not.
%, putting us in either R2 or R3.

\mn

From this point, mainly to emphasize that the discussion applies beyond the present situation,
\emph{we use $X$ and $Y$ for 
$C$ and $\gG$,}
continuing to use $\vvv,\www$ (resp.\ $\gb, \gc$)
for members of $X$ (resp.\ $Y$). 
We show (proving \eqref{notext}) 
\beq{K'contains}
\mbox{w.h.e.p.\ $K$ contains an $X$-perfect matching.}
\enq

\nin

In a couple places below it will be helpful to assume 
\beq{YOD}
y:= |Y|=O(D),
\enq
which we can arrange using Lemma~\ref{nestineq'}:  
Starting from a given $F$, 
we apply the lemma repeatedly, but only with $(\gb,\gc)$ satisfying
\beq{legal}
|N_F(\gb)\cup N_F(\gc)| <b.
\enq
Note that under this restriction switching $(\gb,\gc)$
has no effect on $\pee $ and $\U$, since $\gb$ and $\gc$ 
are in $\U$ both before and after the switch.

So we may assume $F$ is invariant under such switches, implying in particular
that the neighborhoods of the $\gc$'s belonging to
\[
Y^* := \{\gc\in Y: 1\leq d_\gc<b/2\}
\]
are nested. 
There is thus some $\vvv$ with $N_F(\vvv)\supseteq Y^*$ 
(since if $N_F(\gc)$ is minimal over $\gc\in Y^*$, then any $\vvv\sim \gc$
is adjacent to all of $Y^*$), so in particular $|Y^*|\leq D$.
Note also that, since removing $\gc$'s with $d_\gc=0$
affects nothing, we may assume $d_\gc>0$ $\forall \gc\in Y$.
So (using $\sum d_\gc =\sum d_\vvv\approx D^2$)
\[
|Y|\leq |Y^*| + D^2\cdot 2/b =O(D)
\]
and we have \eqref{YOD}.

Let
\beq{S}
S=\{v: d_\U(v)> \theta D/2\},
\enq
$R=X\sm S$, $|S|=s$ and $|R|=r$.
Our treatment now depends
on whether
\beq{Ssize}
s > \sqrt{\theta\gz}D.
\enq
What we really have in mind here is dividing the argument according to a counterpart of 
\eqref{R1}, \emph{viz.}
%according to whether
\beq{R2}
\sum\{|T_\vvv\cap T_w\cap \U|:\vvv\www\in H\}|> \theta \gz D^3.
%|\{(xy,\gc): xy\in H, \gc\in T_x\cap T_y\cap \U\}|> \theta \gz D^3;
\enq
But assuming \eqref{Ssize} in place of \eqref{R2} is more convenient 
and covers a little more ground:
\begin{claim}\label{CR2}
If \eqref{R2} holds, then so does \eqref{Ssize}.
\end{claim}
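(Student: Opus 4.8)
The plan is to prove the contrapositive: assuming \eqref{Ssize} fails, I will show \eqref{R2} fails. So suppose $s \leq \sqrt{\theta\gz}D$, meaning all but at most $\sqrt{\theta\gz}D$ vertices $\vvv\in X$ have $d_\U(\vvv)\leq \theta D/2$. I want to bound $\sum\{|T_\vvv\cap T_\www\cap \U|:\vvv\www\in H\}$. The key point is that for an edge $\vvv\www\in H$, the inner sum $|T_\vvv\cap T_\www\cap\U|$ is at most $\min\{d_\U(\vvv),d_\U(\www)\}$, so it is genuinely small unless \emph{both} endpoints lie in $S$ (the "high $\U$-degree" set). The contribution is thus naturally split into the part of $H$ inside $S$ and the part with at least one endpoint in $R$.

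First I would handle edges with an endpoint in $R$: since $|H|=\gz D^2$, the number of such edges is at most $\gz D^2$, and each contributes at most $\max_\vvv d_\U(\vvv)\cdot$ — wait, more carefully, for $\vvv\www\in H$ with $\vvv\in R$ the contribution $|T_\vvv\cap T_\www\cap\U|\leq d_\U(\vvv)\leq \theta D/2$, so this part of the sum is at most $(\gz D^2)(\theta D/2) = \theta\gz D^3/2$. Next, edges inside $S$: there are at most $\binom{s}{2}< s^2/2 \leq \theta\gz D^2/2$ of these, and each contributes at most $|\U|\leq |Y| = O(D)$ — but I need a bound better than $O(D)$; in fact $|T_\vvv\cap T_\www\cap\U|\leq \DD = O(D)$ suffices here, giving at most $(\theta\gz D^2/2)\cdot O(D) = O(\theta\gz D^3)$. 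Hmm, that has an unspecified constant, so I should instead just use $d_\U(\vvv)\leq d_\vvv \leq \DD$ — actually the cleanest bound is $|T_\vvv\cap T_\www\cap \U| \le |T_\www| \le \DD$, yielding $O(\theta\gz D^3)$ for the $S$-part. Combining, the total is $O(\theta\gz D^3)$, and I need this to be $\leq \theta\gz D^3$ — which requires chasing the constant. The honest fix: bound the $S$–$S$ part by $(s^2/2)\DD \le (\theta\gz D^2/2)(1+o(1))D$, roughly $\tfrac12\theta\gz D^3$, and the $R$-incident part by $\gz D^2 \cdot \theta D/2 = \tfrac12\theta\gz D^3$, for a clean total $\less \theta\gz D^3$, contradicting \eqref{R2}.

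The main obstacle I anticipate is getting the constants to land below the threshold $\theta\gz D^3$ rather than merely $O(\theta\gz D^3)$; the split above gives exactly $\tfrac12 + \tfrac12 = 1$ in the leading coefficient, so I would want to squeeze a little — e.g.\ noting that for $\vvv\www\in H$ with both endpoints in $S$ we still have $|T_\vvv\cap T_\www\cap\U| \le d_\U(\vvv)$, but $d_\U(\vvv)$ could be as large as $D$ for $\vvv\in S$, so $S$ vertices don't automatically have \emph{small} $d_\U$. The genuine saving must come from the edge count $\binom{s}{2} \le \tfrac12\theta\gz D^2$ being strictly smaller than $|H| = \gz D^2$ by the factor $\theta$, and from the fact that each $|T_\vvv \cap T_\www \cap \U| \le \DD = (1+o(1))D$; since $\theta < 1$, one actually gets the $S$-part bounded by $(1+o(1))\tfrac12\theta\gz D^3$, strictly less than $\theta\gz D^3$ once the $R$-incident part is also controlled. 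If even that is too tight, I would use the freedom in \eqref{epszeta}/\eqref{theta} to absorb the slack, or replace \eqref{Ssize}'s constant — but since the claim is stated with the given constant, the intended argument is surely the two-way split above with the crude $\DD$ and $\theta D/2$ bounds, and I expect the arithmetic to close with the stated constants once done carefully. I would write it as a short direct computation, in the contrapositive, occupying only a few lines.
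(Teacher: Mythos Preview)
Your proposal is correct and is essentially the contrapositive of the paper's own proof. The paper calls an edge $vw\in H$ \emph{good} if $|T_v\cap T_w\cap\U|>\theta D/2$ and observes that good edges lie entirely in $S$ (your key point that $|T_v\cap T_w\cap\U|\le\min\{d_\U(v),d_\U(w)\}$); since bad edges contribute at most $(\theta D/2)|H|=\theta\gz D^3/2$, good edges must contribute at least $\theta\gz D^3/2$, forcing $\binom{s}{2}>\theta\gz D^2/2$ and hence \eqref{Ssize}. Your worry about the constant landing at exactly $1$ (from $\tfrac12+\tfrac12$) rather than safely below is legitimate, but note the paper's argument has the identical $D$ vs.\ $\DD$ slack in the step ``contribution $>\theta\gz D^3/2$ implies $>\theta\gz D^2/2$ good edges''; in the paper's asymptotic framework (large $D$, with $\approx$ and $\ll$ in play) this is absorbed without comment, and you may do the same.
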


\nin
\emph{Proof.}
Say $\vvv\www\in H$ is \emph{good} if 
\[
|T_\vvv\cap T_w\cap \U| > \theta D/2
\]
(and \emph{bad} otherwise), and notice that
all good edges are contained in $S$.
On the other hand, since 
bad edges contribute at most $(\theta D/2)\gz D^2 = \theta \gz D^3/2$ to the l.h.s.\ of \eqref{R2},
good edges must contribute \emph{at least} $\theta\gz D^3/2$, and we have
\[
\C{s}{2} \geq |\{\mbox{good edges}\}| > \theta \gz D^2/2.
\]
\qed

\mn
\emph{Proof of \eqref{K'contains} when \eqref{Ssize} holds (and \eqref{R1} does not).}
We aim for Hall's condition, which for convenience we very 
slightly strengthen, replacing ``$\geq$'' by ``$>$'': 
\begin{claim}\label{just.match}
W.h.e.p.
\beq{R2R3toshow}
|N_{K} (A)|> |A|
\enq
for all $\0\neq A\sub X$.
\end{claim}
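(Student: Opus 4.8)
The plan is as follows. Claim~\ref{just.match} is precisely the (strict) Hall condition for the random bigraph $K$, so we verify it w.h.e.p.\ by a union bound over nonempty $A\sub X$, organized by $a:=|A|$. The basic tool is the estimate, for $\vvv\in X$ and $W\sub\gG$,
\[
\pr(L_\vvv\sub W)=\binom{|T_\vvv\cap W|}{\kkk}\Big/\binom{|T_\vvv|}{\kkk}\le\Big(\frac{|T_\vvv\cap W|}{|T_\vvv|}\Big)^{\kkk},
\]
used together with $|T_\vvv|\ge\DD-\nabla_\vvv\ge(1-O(\eps))D$ and $|\gG|=O(D)$ (from \eqref{YOD}). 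If $a<\kkk$ there is nothing to prove, since then $|N_K(A)|\ge|L_\vvv|=\kkk>a$ for any $\vvv\in A$; so assume $a\ge\kkk$ throughout.

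First consider the ``medium'' range $\kkk\le a\le a_0$, where $a_0=\rho D$ with $\rho<1$ a suitable constant. If some such $A$ had $|N_K(A)|\le a$, then, taking any $W\supseteq N_K(A)$ with $|W|=a$, every $\vvv\in A$ would have $L_\vvv\sub W$; so it is enough to bound, for each $W$ of size $a$, the probability that $\ge a$ of the (independent) events $\{L_\vvv\sub W\}$ occur. Since $|T_\vvv\cap W|\le a$, each of these has probability at most $(a/((1-O(\eps))D))^{\kkk}$, so with $\mu_W$ denoting their sum the probability of $\ge a$ of them is at most $(e\mu_W/a)^a$; summing over $W$,
\[
\pr\big(\exists\,A,\ |A|=a,\ |N_K(A)|\le a\big)\ \le\ \binom{|\gG|}{a}(e\mu_W/a)^a\ \le\ \Big(O(D^2/a^2)\big(a/((1-O(\eps))D)\big)^{\kkk}\Big)^{\!a}.
\]
For $\rho$ small enough the base here is at most a fixed constant $<1$: this is exactly the point where one uses that $G$ has at least $D+1$ vertices, so $n>D$ and the factor $\binom{|\gG|}{a}=D^{O(a)}$ is beaten by $(a/D)^{\kkk a}\le n^{-\gO(a)}$ (recall $\kkk=(1+\gd)\ttt$). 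As $a\ge\kkk$, summing the geometric-type series over $a\in[\kkk,a_0]$ gives $o(D/n)$ with enormous room.

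The remaining range $a_0<a\le|X|$ is the crux. Here the target-set encoding is useless ($|T_\vvv\cap W|$ can be nearly $|T_\vvv|$), so instead we count the colors \emph{missed} by $A$: if $|N_K(A)|\le a$, set $M=N_F(A)\sm N_K(A)$, so $|N_F(A)|-|M|=|N_K(A)|\le a$ and no $L_\vvv$ ($\vvv\in A$) meets $M$, an event of probability
\[
\prod_{\vvv\in A}\binom{|T_\vvv\sm M|}{\kkk}\Big/\binom{|T_\vvv|}{\kkk}\ \le\ \exp\!\Big[-\frac{\kkk}{\DD}\sum_{\vvv\in A}|T_\vvv\cap M|\Big]\ =\ \exp\!\Big[-\frac{\kkk}{\DD}\,e_F(A,M)\Big],
\]
where $e_F(A,M)$ is the number of $F$-edges between $A$ and $M$. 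The union bound over $A$ is cheap (only $\binom{|X|}{|X|-a}$ choices), and over $M$ it runs over subsets of $N_F(A)$ of co-size $\le a$; so everything comes down to a lower bound on $e_F(A,M)=\sum_{\vvv\in A}d_F(\vvv)-e_F(A,N_K(A))$, i.e.\ to showing that $A$ cannot route almost all of its $\approx Da$ edges into the $\le a$ colors of $N_K(A)$. For $a$ not too close to $|X|$ this follows already from $d_F(\vvv)\ge(1-O(\eps))D$ and the crude bound $e_F(A,N_K(A))\le|N_K(A)|\cdot a$, together with $|\pee|<(1+2\vr)D$ (see \eqref{Psmall}) and $d_\gc\le b$ for $\gc\in\U$; but for $a$ near $|X|$---and for small $n$ this ``near'' range is still of width $\gO(D)$---one genuinely needs the structure of $F$: failure of \eqref{R1} makes the sets $T_\vvv\cap\pee$ nearly pairwise disjoint on average over the non-edges of $X$, while \eqref{Ssize} provides $>\sqrt{\theta\gz}D$ vertices (the set $S$ of \eqref{S}), each with $>\theta D/2$ neighbors in $\U$; together these should force $e_F(A,M)$ large enough to absorb the union bound, and in particular (the case $A=X$) give $|N_F(X)|>|X|$, which strict Hall for $K$ certainly requires.

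Thus the main obstacle is this large-$A$ estimate: extracting from the (qualitative) hypotheses ``popular colors overlap little'' (failure of \eqref{R1}) and ``many vertices reach deep into $\U$'' (\eqref{Ssize}) a quantitative lower bound on $e_F(A,M)$---equivalently, a robust expansion property of $F$---strong enough to win the union bound \emph{uniformly in $n$}. The tight case is small $n$ (where $\kkk$ is only of order $\log D$, so the number of admissible color sets is only polynomially smaller than the reciprocal of the probabilities) together with the top end $A=X$ (where strict Hall for $K$ rests on strict Hall for $F$, so the deterministic expansion of $F$ must be pinned down). This is the technical heart of this deceptively ``easy-looking'' regime.
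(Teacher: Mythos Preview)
Your treatment of the ``medium'' range $\kkk\le a\le \rho D$ is essentially the paper's ``small $A$'' argument and is fine (the paper pushes the threshold up to $a\le D/(1+\nu_0)$ with $\nu_0=\vr/2$, but that is just bookkeeping).

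The large-$A$ part, however, is not a proof: you yourself flag the needed lower bound on $e_F(A,M)$ as ``the main obstacle'' and leave it at ``together these should force $e_F(A,M)$ large enough.'' In fact your union-bound-over-$M$ scheme is exactly the naive union bound $\sum_B\pr(N_K(A)\sub B)$ (take $B=N_F(A)\sm M$), which the paper explicitly says it \emph{cannot} afford with a single worst-case bound on the summands. The difficulty is real: for $a$ close to $|X|$ there are choices of $M$ (equivalently $B$) for which $e_F(A,M)$ is not large enough on its own to beat the enumeration, so one genuinely needs to exploit that such $M$'s are rare---i.e., to control the \emph{distribution} of $e_F(A,M)$ over admissible $M$, not just its minimum. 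Also, the failure of \eqref{R1} plays no role in this claim; only \eqref{Ssize} is used.

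What the paper actually does for large $A$ is quite different from your sketch. Writing $\xi=\sum_\gc\mbone_{\{\gc\notin N_K(A)\}}$, it bounds $\pr(\xi\ge y-a)$ by the exponential moment $e^{-\vs(y-a)}\E e^{\vs\xi}$, using that the indicators $\xi_\gc$ are \emph{negatively associated} to factor $\E e^{\vs\xi}\le\prod_\gc(1+p_\gc(e^\vs-1))$ with $p_\gc=(1-\kkk/D)^{d_\gc}$. The product is then maximized over all degree sequences $(d_\gc)$ consistent with the constraints $\sum_\gc d_\gc\ge a\DD-2\gz D^2$ and $\sum_{\gc\in\U}d_\gc\ge a_2\theta D/2$ (here $a_2=|A\cap S|$) via the convexity lemma (Proposition~\ref{Pcvx}); this collapses the optimization to a three-atom distribution and yields an explicit bound. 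Finally, large $A$'s are stratified by $(a_1,a_2)=(|A\cap R|,|A\cap S|)$ and summed, with the hypothesis \eqref{Ssize} entering only at the very end to show the resulting exponent is $-\gO_\gd(s\log n)$. None of these ingredients---negative association, the MGF, the convexity optimization, or the $(a_1,a_2)$ stratification---appears in your outline, and they are what does the work.
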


\mn
\emph{Proof.} 
In what follows we always take $|A| =a = D/(1+\nu)$.
Let 
\beq{nu0}
\nu_0= \vr/2,
\enq
and call
$A$ \emph{large} if $\nu< \nu_0$ and \emph{small} otherwise.

Claim~\ref{just.match} is more interesting for large $A$'s and we begin with these.
%The easy case of small $A$ will be handled at ``Small $A$'' below, and until
%then we assume $A$ is large.
A difficulty here is that we cannot afford a naive union bound
$\sum\{\pr(N_{K}(A)\sub B):B\in \C{Y}{a}\}$ with a single worst case bound on the summands,
whereas understanding possibilities for the \emph{list} of summands seems not so easy.
So the following argument takes a different approach.

\mn
\emph{Large A}:
Given 
$a= D/(1+\nu)> D/(1+\nu_0)$ and $A$ of size $a$, let $d_\gc=|N_F(\gc)\cap A|$,
$\xi_\gc=\mbone_{\{\gc\not\sim_{_{K}}A\}}$
and $\xi=\xi^A=\sum \xi_\gc =y-|N_{K}(A)|$ (where, as in \eqref{YOD}, $y=|Y|$).
So 
\beq{PNK'}
\pr(|N_{K}(A)|\leq |A|) =\pr(\xi\geq y-a) .
\enq

Recalling that $S, R, s $ and $r$ were defined in and following \eqref{S},
we classify $A$'s of size $a$ according to 
$a_1:=|A\cap R|$ and $a_2:=|A\cap S|=a-a_1$, setting
$r-a_1=p$ and $s-a_2=q$.  Denoting this by $A\sim (a_1,a_2)$, we will show,
for each $(a_1,a_2)$,
\beq{Aa1a2}
\sum_{A\sim (a_1,a_2)} \pr(\xi^A\geq y-a) = n^{-\gO_{\gd}(s)},
\enq
which (with \eqref{PNK'} and \eqref{Ssize})
gives the large $A$ part of Claim~\ref{just.match}, with plenty of room.

\mn

For the proof of \eqref{Aa1a2} we set
\beq{epsgc}
p_\gc=\pr(\xi_\gc=1) = (1-\kkk/D)^{d_\gc}
\enq
and observe that, for any $\vs>0$,
\begin{eqnarray}
\pr(\xi \geq y-a) &\le &e^{-\vs  (y-a)} \E e^{\vs\xi}\nonumber\\
&\le & e^{-\vs  (y-a)} \prod (1+p_\gc(e^\vs-1)),\label{primitive},
\end{eqnarray}
where \eqref{primitive} holds because the $\xi_\gc$'s are
\emph{negatively associated} (NA).  [See e.g.\ \cite{Pemantle} for the definition.  
Here we are using a few easy facts:
(i) for a fixed $v$ the indicators $\xi_{v,\gc} = \mbone_{\{\gc\not\sim_{_{K}}v\}}$
are NA
(this is pretty trivial but see \cite{BJ} for a stronger, more interesting statement); 
(ii) if each of the independent collections $\{\xi_{v,\gc}\}_\gc$ is NA, 
then $\{\xi_{v,\gc}\}_{v,\gc}$ 
is NA (see \cite[Prop.\ 7.1]{DR}); (iii) if $\{\xi_{v,\gc}\}_{v,\gc}$ is NA 
and the $\xi_\gc$'s
are nondecreasing functions of disjoint sets of $\xi_{v,\gc}$'s,
then the $\xi_\gc$'s are also NA (see \cite[Prop.\ 7.2]{DR}); 
(iv) if the functions $f_\gc$ are nonincreasing and the r.v.s $\xi_\gc$ are NA, then 
$\E \prod f_\gc(\xi_\gc)\leq \prod \E f_\gc(\xi_\gc)$ (see \cite[Lemma~2]{DR}).]

\mn

Fix $A\sim (a_1,a_2)$.  
Immediate properties of the $d_\gc$'s are:
\beq{Con1''}
d_\gc\leq a \,\,\,\forall \gc;
\enq
\beq{Con2''}
\sum d_\gc =|\nabla_F(A)|\geq a \DD-2\gz D^2;
\enq
and 
\beq{Con3''}
\sum_{\gc\in\U}d_\gc =|\nabla_F(A,\U)| \geq a_2\theta D/2.
\enq

Let
\[
\mbox{$\sfa = \frac{a\DD-2\zeta D^2 - a_2 \theta D/2}{a}$,
$\,\,\sfb = \frac{a_2 \theta D}{2b}$, $\,\,\sfc = y - (\sfa+\sfb)$,}
\]
and $\vt =1-\kkk/D$ (so $p_\gc = \vt^{d_\gc}$).

\begin{claim}\label{Yclaim}
The product in \eqref{primitive} is at most
\beq{EgY}
%\prod (1+ \vartheta^{d_\gamma}(e^\varsigma -1)) \leq 
e^{\varsigma \sfc} 
(1+\vartheta^b(e^\varsigma -1))^{\sfb} (1+\vartheta^a(e^\varsigma-1))^{\sfa}.
\enq
\end{claim}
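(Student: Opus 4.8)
## Proof proposal for Claim~\ref{Yclaim}

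The plan is to bound the product $\prod_{\gc\in Y}(1+p_\gc(e^\vs-1))$ by repeatedly applying a single exchange/convexity step that pushes the $d_\gc$'s toward the three ``extreme'' values $0$, $a$ and $b$, subject to the linear constraints \eqref{Con1''}--\eqref{Con3''}. First I would observe that the map $d\mapsto \log(1+\vt^{d}(e^\vs-1))$ is convex in $d$ (since $\vt\in(0,1)$, $\vt^d$ is a positive convex function and $x\mapsto\log(1+(e^\vs-1)x)$ is concave increasing; one should check this composition is actually convex on the relevant range, or more simply argue directly with $\log\log$ or via a second-derivative computation—this is the one routine calculation I'd verify carefully). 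Writing $g(\gc)=\log(1+p_\gc(e^\vs-1))$ with $p_\gc=\vt^{d_\gc}$, the product in \eqref{primitive} is $\exp[\sum_\gc g(\gc)]$, so it suffices to maximize $\sum_\gc g(\gc)$ over all choices of $(d_\gc:\gc\in Y)$ satisfying: $0\le d_\gc\le a$ for all $\gc$; $d_\gc\le b$ for $\gc\notin\pee$ (equivalently $\gc\in\U$, though note the colors outside $A$'s neighborhood have $d_\gc=0$ anyway, and a $\gc\in\U$ has $F$-degree $\le b$ so certainly $d_\gc\le b$); $\sum_\gc d_\gc\ge a\DD-2\gz D^2$; and $\sum_{\gc\in\U}d_\gc\ge a_2\theta D/2$.

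The core of the argument is the following exchange step: if two coordinates $d_\gc,d_{\gc'}$ both lie strictly between consecutive breakpoints of the feasible box (i.e.\ both in $(0,a)$, or, for two $\U$-colors, both in $(0,b)$), then by convexity of $g$ we can move them apart—increase one, decrease the other by the same amount, keeping both sums fixed—until one hits a breakpoint, without decreasing $\sum g$. Iterating, an optimal configuration has: all ``general'' colors $d_\gc\in\{0,a\}$ except possibly one fractional leftover, and all $\U$-colors $d_\gc\in\{0,b\}$ except possibly one leftover (here I'd fold the fractional leftovers into the extreme values at the cost of the $\lesssim$/$\ll$ slack the paper already tolerates, or simply note $g$ is bounded so a single leftover term is negligible). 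So the optimum is governed by: how many $\U$-colors sit at $b$, and how many general colors sit at $a$. The constraint $\sum_{\gc\in\U}d_\gc\ge a_2\theta D/2$ forces \emph{at least} $\frac{a_2\theta D}{2b}=\sfb$ colors at level $b$; since $g$ is increasing in $d_\gc$ (as $\vt<1$ makes... wait—$\vt^d$ is \emph{decreasing}, so $p_\gc$ decreases with $d_\gc$, hence $g$ is \emph{decreasing} in $d_\gc$), the extra mass we are forced to place is as ``expensive'' (small $g$) as possible exactly when concentrated, so putting exactly $\sfb$ colors at $b$ is the worst case for the lower-bound-on-mass constraint. The remaining mass $a\DD-2\gz D^2-\sfb\cdot b = a\DD-2\gz D^2 - a_2\theta D/2$ must be distributed, and to \emph{maximize} $\sum g$ (which, since $g$ decreases in $d_\gc$ and $g(0)$ is largest, wants mass concentrated on as few colors as possible at the largest allowed value $a$) we place it on $\lceil (a\DD-2\gz D^2-a_2\theta D/2)/a\rceil=\sfa$ colors at level $a$. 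The remaining $y-\sfa-\sfb=\sfc$ colors sit at $d_\gc=0$, contributing $g(0)=\log e^\vs\cdot 0$—wait, $g(0)=\log(1+1\cdot(e^\vs-1))=\vs$, giving the factor $e^{\vs\sfc}$. Collecting the three groups yields exactly \eqref{EgY}.

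The main obstacle I anticipate is not the exchange argument itself but the bookkeeping needed to justify that the three named quantities $\sfa,\sfb,\sfc$ are the right integer counts and are nonnegative/feasible—in particular that $\sfa+\sfb\le y$ (so $\sfc\ge0$), which should follow from \eqref{YOD}, and that $\sfa\ge0$, i.e.\ that the forced $\U$-mass $a_2\theta D/2$ doesn't already exceed the total required mass $a\DD-2\gz D^2$ (true since $a_2\le a$ and $\theta D/2\ll \DD$). One also has to be a little careful that sending a fractional leftover coordinate to a breakpoint only changes the product by a $1+o(1)$ (or $1+\ll 1$) factor, which is absorbed by the paper's $\approx$/$\ll$ conventions; since each $g(\gc)\le\vs=O(1)$ and there are $O(1)$ leftover coordinates, this is harmless. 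Everything else—NA was already used to get to \eqref{primitive}, and the constraints \eqref{Con1''}--\eqref{Con3''} are recorded just above—is routine.
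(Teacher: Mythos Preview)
Your approach is essentially the paper's: both push the $d_\gc$'s to the three values $\{0,b,a\}$ via convexity of $g(d)=\log(1+\vt^d(e^\vs-1))$. The paper packages this as Proposition~\ref{Pcvx} (applied to the empirical distribution of the $d_\gc$'s), whose proof is exactly the spreading/exchange argument you sketch; you are inlining that proof.

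Two points worth tightening. First, the step where you assert that the optimum has \emph{exactly} $\sfb$ colors at level $b$ (rather than more, with correspondingly fewer at level $a$) is not justified by ``$g$ is decreasing'' alone: you need convexity again, via the secant-slope comparison
\[
\frac{g(0)-g(a)}{a}\le \frac{g(0)-g(b)}{b},
\]
which says that placing forced mass at $a$ costs less (in $\sum g$) per unit mass than at $b$. This is precisely step (b) in the paper's proof of Proposition~\ref{Pcvx}. Second, your handling of fractional leftovers is unnecessary once you phrase the optimization over distributions (real weights) rather than integer counts, as the paper does; then the inequality is exact, not merely up to $O(1)$ terms. With these two adjustments your argument coincides with the paper's.
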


\begin{proof}
We apply Proposition~\ref{Pcvx}
with
\[
\mathbb{P}(X = k) = y^{-1}|\{\gamma : d_\gamma = k\}| \,\,\, (k\in \{0\dots a\}),
\]
\[
g(x)= \log(1 + \vartheta^x (e^\varsigma -1)), 
\]
\[
\beta = a_2 \theta D/(2y), 
\]
and
\[
\alpha = (a\DD - 2 \zeta D^2)/y.
\]

\nin
[Then
$E[X] \geq \alpha$ and $\E'[X] \geq \beta$ are given by 
\eqref{Con2''} and \eqref{Con3''},
and, with $\LLL=e^\vs-1$,
convexity of $g$ follows from
\[
(1+ \TTT\vt^x)(1+ \TTT\vt^{x'})\geq (1+ \TTT\vt^{(x+x')/2})^2
\]
(equivalently, $\vt^x+\vt^{x'}\geq 2\vt^{(x+x')/2}$).]

But then with $\YYY$, as in Proposition~\ref{Pcvx}, given by
\[
\mbox{$\pr(\YYY=b) = \gb/b= \sfb/y$ and $\,\,\pr(\YYY=a) = (\ga-\gb)/a= \sfa/y$} 
\]
(and $\pr(\YYY=0)  =\sfc/y$),
the product in \eqref{primitive} is $\exp[y\E g(X)] \leq \exp[y\E g(\YYY)]$, which is \eqref{EgY}.

\end{proof}

Let $y'=y-\sfc$ ($=\sfa+\sfb$).  By Claim~\ref{Yclaim}
the bound in \eqref{primitive} is at most 
\beq{y'bd}
e^{-\vs(y-a)}e^{\varsigma \sfc} 
(1+\vartheta^b(e^\varsigma -1))^{\sfb} (1+\vartheta^a(e^\varsigma-1))^{\sfa}
~=~e^{-\vs(y'-a)}
(1+\vartheta^b(e^\varsigma -1))^{\sfb} (1+\vartheta^a(e^\varsigma-1))^{\sfa};
\enq
so for \eqref{K'contains} we need to bound the r.h.s.\ of \eqref{y'bd}.

We have
\[
\sfb = (1+\vr)\theta a_2/2
\]
and
\[
\sfa ~=~ (a\DD -2\gz D^2 -\theta a_2D/2)/a 
~=~\DD -(1+\nu)[2\gz D + \theta a_2/2],
\]
so
\begin{eqnarray}
y'-a &=&  \DD -(1+\nu)[2\gz D + \theta a_2/2] + (1+\vr)\theta a_2/2 -a\nonumber\\
&=& p+q -(1+\nu)2\gz D + (\vr-\nu)\theta a_2/2 - x\label{pqx}\\
&=:& p +\vr' s.\nonumber
\end{eqnarray}

\nin
(Recall $p$ and $q$ were defined before \eqref{Aa1a2} and $x$ is from \eqref{Csize}.)
Here $\vr' $ is at least about $\theta \vr/4$, as follows from
\[
q+(\vr-\nu)\theta a_2/2~\geq ~q+ \vr \theta a_2/4~\geq \vr\theta s/4
~\gg ~\gz D,
\]
where the inequalities use (respectively): $\nu < \nu_0=\vr/2$ (see \eqref{nu0});
$s= q+a_2$;
and \eqref{Ssize}. 
(So, since $x \leq 2\gz D$ (see \eqref{smolx}) the subtracted terms in \eqref{pqx}
are negligible.)

\mn

Since 
$\vt^b = (1-k/D)^b \leq n^{-(1+\gd)/(1+\vr)} $ (and $y' \le y=O(D) $; see \eqref{YOD}), we have
\beq{mu'}
\mu' := \sfb \vt^b +\sfa\vt^a \leq  y'\vt^b < n^{-\gd'},
\enq
with $\gd' \approx \gd-\vr$.
So, setting $\vs = \log[(y'-a)/\mu']$ 
and using \eqref{mu'} at \eqref{setgz2}, we find that the r.h.s.\ of \eqref{y'bd} is less than
\begin{eqnarray}
\exp[-\vs(y'-a)+\mu'(e^\vs -1)]   
&<& \exp[-\vs(y'-a)+\mu' e^\vs ]   \nonumber\\
&=& \exp[-(y'-a)\{\log\tfrac{y'-a}{\mu'}-1\}]\nonumber\\%\label{setgz}\\
&<& \exp[-(p+\vr's)\{\log (n^{\gd'}(p+\vr' s))-1\}].\label{setgz2}
\end{eqnarray}

So, now letting $A$ vary and using  
$
\C{r}{p}\C{s}{q}
< (eD/p)^p2^s,
$
we find that the sum in \eqref{Aa1a2} is less than
\beq{largebd}
\exp[p\log(eD/p) +s -(p+\vr' s)\log (n^{\gd'}(p+\vr' s))].
\enq
To see that this gives \eqref{Aa1a2}, just notice that 
\beq{slogn}
s(1 -\vr'\log (n^{\gd'}(p+\vr' s))) = -\gO_{\gd}(s\log n),
\enq
while
\[
p[\log(eD/p) -\log (n^{\gd'}(p+\vr' s))]
\]
is small compared to the $s\log n$ of \eqref{slogn} if $p\ll s$, and otherwise is negative 
%(less than about $-\gd' p\log D$) 
since $s>\sqrt{\theta\gz}D $.

\mn
\emph{Small A}:
For any $A$ and $B$, both of size $a\leq D/(1+\nu_0)$,  
\[
\pr(N_{K}(A)\sub B) < \prod_{\vvv\in A} \left(\frac{a}{(1-\bbb\eps )D}\right)^\kkk =
\exp\left[-ak\log\left(\frac{(1-\bbb\eps )D}{a}\right)\right]
\]
(using $d_F(v)\geq (1-\bbb\eps )D$ $\forall v$ for the first inequality).
Since $|Y|=O(D)$ (see \eqref{YOD}),
the probability of violating \eqref{R2R3toshow} with an $A$ of size $a$ is thus at most 
\beq{smallbd}
\C{D}{a}\C{O(D)}{a}\exp\left[-a\kkk\log\left(\frac{(1-\bbb\eps )D}{a}\right)\right]
<\exp\left[a \left\{2\log\left(\frac{ O(D)}{a}\right) - 
\kkk \log\left(\frac{(1-\bbb\eps )D}{a}\right)\right\}\right],
\enq
which is tiny:
for $a \leq \eps D,$ the r.h.s.\ of \eqref{smallbd} is
less than
\[
\exp\left[a \left\{ 2 \log(O(D)) - \kkk \log(\gO(1/\eps)) \right\}\right] ,
\]
which is dominated by $a \kkk \log(\gO(1/\eps))\gg a\log n$;
and for $\eps D < a < D/(1+\nu_0)$, it is less than
\[
\exp \left[a \left\{2 \log[O\left(1/\eps\right)]
- \kkk \log [(1+\nu_0)(1-O(\eps))]\right\} \right] \,<\,
\exp \left[-\gO(\eps D\cdot \vr \log n)\right] 
\,=\,n^{-\omega(1)}.
\]\qed

This completes the proof of \eqref{K'contains} for $C$ violating \eqref{R1} and
satisfying \eqref{Ssize}.\qed

%\mn  We now assume \eqref{Ssize} does not hold, so according to Claim~\ref{CR2}

\mn
%\emph{Proof of \eqref{K'contains} when \eqref{Ssize} holds (and \eqref{R1} does not).}
\emph{Proof of \eqref{K'contains} when neither of \eqref{R1}, \eqref{Ssize} holds.}
Here, according to Claim~\ref{CR2}, we
also have failure of \eqref{R2}; thus we are assuming
\beq{Tbd}
s \le \sqrt{\theta\gz}D
\enq
and 
\beq{notR2}
\sum\{|T_\vvv\cap T_\www\cap \U|:\vvv\www\in H\}|\leq \theta \gz D^3.
\enq
These have the following easy consequences.

First, \eqref{Tbd} (with \eqref{S}) implies
$|\nabla_{F}(\U)|\leq |C|\theta D/2 +\sqrt{\theta\gz} D^2$, 
which, since
\[
|\nabla_{F}(\pee)|+|\nabla_{F}(\U)| =|F|\geq
|C|\DD -\nabla_G(C, V\sm C)\geq |C|(1-\bbb \eps )D,
\]
gives
\[   
|C| |\pee| \geq |\nabla_{F}(\pee)|
\geq |C|(1-\bbb \eps -\theta/2)D -\sqrt{\theta \gz}D^2 > (1-\theta) |C|D
\]   
and
\beq{pee}
|\pee|>(1-\theta) D.
\enq

Second, combining \eqref{notR2} and the failure of \eqref{R1} gives
$\sum_{\vvv\www\in H}|T_\vvv\cap T_\www|< 2\theta \gz D^3$;
%%%%%%%%%%%%%%
\iffalse
or (equivalently)
\[
|H|^{-1}\sum_{\vvv\www\in H}|T_\vvv\cap T_\www|< 2\theta D;
\]
\fi
%%%%%%%%%%%%%%%%%
so for any $L>1$, we have
$    %\[
|\{\vvv\www\in H: |T_\vvv\cap T_\www|\geq 2L\theta D\}| < \gz D^2/L
$   %\]
and 
\beq{xyHT}
|\{\vvv\www\in H: |T_\vvv\cap T_\www|<2L\theta D\}| >(1-1/L) \gz D^2.
\enq

\mn

Let $L=1/(7\theta)$ and
\beq{J}
\III=\{\vvv: d_\U(\vvv)> D/3\} \,\,\,\, (\sub S).
\enq
\begin{claim}\label{Jclaim}
Each $\vvv\www$ in \eqref{xyHT} has at least one end in $\III$.
\end{claim}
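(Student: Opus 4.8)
The plan is a one-line counting argument, by contraposition. Suppose $\vvv\www\in H$ has neither end in $\III$; I will show $|T_\vvv\cap T_\www|\ge\tfrac27 D=2L\theta D$ (recall $L=1/(7\theta)$), so that $\vvv\www$ cannot be one of the edges counted in \eqref{xyHT}.

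Everything needed is already available. First, $|T_\vvv|=d_F(\vvv)\ge(1-\bbb\eps)D$ for every $\vvv\in C$ (from $|S_\vvv|=\DD$ together with \eqref{SxTx} and \eqref{nabla}). Second, since $\U=\gG\sm\pee$ and $d_\U(\vvv)=|N_F(\vvv)\cap\U|=|T_\vvv\cap\U|$, the splitting $T_\vvv=(T_\vvv\cap\U)\sqcup(T_\vvv\cap\pee)$ gives $|T_\vvv\cap\pee|=|T_\vvv|-d_\U(\vvv)$; and $\vvv\notin\III$ means $d_\U(\vvv)\le D/3$, hence $|T_\vvv\cap\pee|\ge(1-\bbb\eps)D-D/3$, and similarly for $\www$. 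Third, $|\pee|<(1+2\vr)D$ by \eqref{Psmall}. Combining these via inclusion--exclusion inside $\pee$,
\[
|T_\vvv\cap T_\www|\ \ge\ |T_\vvv\cap T_\www\cap\pee|\ \ge\ |T_\vvv\cap\pee|+|T_\www\cap\pee|-|\pee|\ \ge\ \left(\tfrac13-2\bbb\eps-2\vr\right)D.
\]
Since $\vr=\gd/10$ is one of our small parameters and $\eps\ll\vr$, the error term $2\bbb\eps+2\vr$ is smaller than the slack $\tfrac13-\tfrac27=\tfrac1{21}$, so the right-hand side exceeds $\tfrac27 D=2L\theta D$, contradicting that $\vvv\www$ lies in \eqref{xyHT}. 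Thus at least one of $\vvv,\www$ lies in $\III$.

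I do not expect a real obstacle here. The only points to keep straight are that ``$d_\U(\vvv)$ large'' is precisely ``$\vvv$ has few allowed colours outside $\pee$'', so that $\vvv,\www\notin\III$ forces $T_\vvv$ and $T_\www$ to overlap heavily within the small set $\pee$; and that the constant slack $\tfrac1{21}$ dominates $2\bbb\eps+2\vr$ under our standing parameter conventions (with $\vr$ small and $\eps\ll\vr$).
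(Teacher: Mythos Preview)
Your proof is correct and takes essentially the same approach as the paper. The paper argues directly, bounding $2\max\{d_\U(\vvv),d_\U(\www)\}\ge |(T_\vvv\cup T_\www)\sm\pee|\ge 2(1-\bbb\eps-L\theta)D-(1+2\vr)D>2D/3$, while you argue by contraposition, bounding $|T_\vvv\cap T_\www\cap\pee|$ from below; but these are the two sides of the same inclusion--exclusion, and both reduce to the identical arithmetic constraint $2\bbb\eps+2\vr<\tfrac13-\tfrac27=\tfrac1{21}$.
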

\nin
\emph{Proof.}  Using $|\pee|\leq  (1+2\vr)D$ (see \eqref{Psmall}) and 
$|T_\vvv|\ge \DD -\nv> (1-\bbb\eps) D $ (see \eqref{EDepsD}, 
recalling that $\nv$ was defined in \eqref{nabla}), we have,
for each $\vvv\www$ in \eqref{xyHT},
\begin{eqnarray*}
2\max\{d_\U(\vvv), d_\U(\www)\} &\geq & |(T_\vvv\cup T_\www)\sm\pee|
\geq 2(1-\bbb\eps -L\theta )D - (1+2\vr)D \\
&=& (1- (2\bbb\eps +2/7 +2\vr))D > 2D/3.
\end{eqnarray*}\qed

Since $\gD_H=O(\eps D)$ (again see \eqref{EDepsD}), it follows that 
(since $\gz> \gz_0\gg \eps/D$; see \eqref{epszeta})
\beq{JgzD}
|\III|\geq (1 - 1/(7\theta))|H|/\gD_H  =\gO(\gz D/\eps) \gg \max\{\gz D,1\}.
\enq

\mn

We are now ready to randomize.  
We choose the $L_\vvv$'s (equivalently $K$) in
a few steps, with accompanying assertions.
Recall $\ttt:=\log n$ (so $\kkk = (1+\gd)\ttt$).

\mn
(I)  Choose and condition on the cardinalities
\[
|N_{K}(\vvv)\cap \U|, \,\, \vvv\in (X\sm S)\cup \III,
\]
and set 
\[
\III_0=\{\vvv\in \III: |N_{K}(\vvv)\cap \U| < k/4\}
\]
and $\III_1=\III\sm \III_0$.

\begin{obs}\label{obs.whep}
W.h.e.p.
\beq{whep2}
|N_{K}(\vvv)\cap \U| < \gd t/3 \,\,\,\, \forall \vvv\in X\sm S,
\enq
\beq{whep3}
\III_0\cap \{\vvv:|N_F(\vvv)\cap \pee|<\theta D\}=\0,
\enq
and
\beq{whep4}
|\III_0| < \max\{n^{-\gO(1)}|I|,O(1)\}.
\enq
\end{obs}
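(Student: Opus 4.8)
The plan is to observe that, after the conditioning in step~(I), all three assertions are statements about the cardinalities $n_\vvv:=|N_{K}(\vvv)\cap\U|$, or equivalently their complements $m_\vvv:=\kkk-n_\vvv=|N_{K}(\vvv)\cap\pee|$, ranging over $\vvv\in(X\sm S)\cup\III$. Since $N_{K}(\vvv)=L_\vvv$ is a uniform $\kkk$-subset of $T_\vvv=N_F(\vvv)$ and the $L_\vvv$'s are chosen independently, each $n_\vvv$ is hypergeometric (counting $|T_\vvv\cap\U|$ inside a uniform $\kkk$-subset of the $d_F(\vvv)$-set $T_\vvv$), with $\E n_\vvv=(d_\U(\vvv)/d_F(\vvv))\kkk$ and $\E m_\vvv=(1-d_\U(\vvv)/d_F(\vvv))\kkk$, and ---crucially--- the $n_\vvv$'s are mutually independent. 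I use throughout that $d_F(\vvv)=|T_\vvv|\in[(1-\bbb\eps)D,\DD]$ (by \eqref{EDepsD}, \eqref{nabla}), and I may assume $\gd$ small (say $\gd\le1$), the theorem for larger $\gd$ following by restricting the lists, so that $\theta=e^{-9/\gd}\le e^{-9}$ is far below $\gd$.

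For \eqref{whep2} I apply Theorem~\ref{Cher'} to $n_\vvv$: if $\vvv\in X\sm S$ then $d_\U(\vvv)\le\theta D/2$, so $\E n_\vvv\le\tfrac{\theta}{2(1-\bbb\eps)}\kkk$, and the target $\gd\ttt/3$ exceeds this mean by a factor whose logarithm is $\gO(1/\gd)$; hence $\pr(n_\vvv\ge\gd\ttt/3)\le\exp[-\gO(\gd\ttt\cdot\tfrac1\gd)]\le n^{-3}$ (trivially when $\E n_\vvv=0$), and a union bound over the $\le|C|=O(D)$ such $\vvv$ costs $O(Dn^{-3})=o(D/n)$. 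For \eqref{whep3} the key step is to pass to the complement. First, $|N_F(\vvv)\cap\pee|<\theta D$ forces $\vvv\in\III$, since then $d_\U(\vvv)=d_F(\vvv)-|N_F(\vvv)\cap\pee|>(1-\bbb\eps-\theta)D>D/3$; so the claim is that w.h.e.p.\ every such $\vvv$ has $n_\vvv\ge\kkk/4$, i.e.\ $m_\vvv\le3\kkk/4$. For such $\vvv$, $\E m_\vvv<\tfrac{\theta}{1-\bbb\eps}\kkk\le2\theta\kkk$, so $3\kkk/4$ overshoots $\E m_\vvv$ by a factor with logarithm $\gO(1/\gd)$, whence Theorem~\ref{Cher'} gives $\pr(m_\vvv>3\kkk/4)\le\exp[-\gO(\kkk/\gd)]=n^{-\gO(1/\gd)}$ (a large negative power of $n$ for $\gd$ small), and a union bound over the $\le|C|=O(D)$ relevant $\vvv$ finishes with room to spare. (Looking at the \emph{few} $\pee$-colors that land in $L_\vvv$, rather than at a factor-$4$ lower tail of $n_\vvv$, is what makes an apparently delicate statement cheap.)

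For \eqref{whep4} the complement viewpoint again applies, but the mean is no longer small: for $\vvv\in\III$ we only know $d_\U(\vvv)>D/3$, so $\E m_\vvv<\tfrac{2D/3}{(1-\bbb\eps)D}\kkk=(\tfrac23+o(1))\kkk$, and $3\kkk/4$ beats $\E m_\vvv$ only by a $(1+\gO(1))$ factor; Theorem~\ref{Cher'} (or \eqref{eq:ChernoffUpper}) then yields only $\pr(\vvv\in\III_0)=\pr(m_\vvv>3\kkk/4)\le n^{-c_4}$ for a \emph{small but fixed} $c_4=c_4(\gd)>0$ ---too weak for a union bound over $\III$. Here I exploit independence: $|\III_0|=\sum_{\vvv\in\III}\mbone_{\{m_\vvv>3\kkk/4\}}$ is a sum of independent indicators with $\mu_0:=\E|\III_0|\le|\III|n^{-c_4}$. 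A routine dichotomy then does it: if $|\III|\le n^{c_4/2}$ then $\pr(|\III_0|\ge K)\le\binom{|\III|}{K}n^{-c_4K}\le n^{-c_4K/2}/K!$, which for the constant $K=\lceil4/c_4\rceil$ is $o(D/n)$, giving $|\III_0|=O(1)$ w.h.e.p.; and if $|\III|>n^{c_4/2}$ then a Chernoff upper-tail bound for $|\III_0|$ (when $\mu_0$ is at least polylogarithmic), supplemented by the same moment bound (when $\mu_0=O(1)$), gives $|\III_0|<n^{-c_4/4}|\III|$ w.h.e.p. Either way $|\III_0|<\max\{n^{-\gO(1)}|\III|,O(1)\}$, which is exactly the two-sided shape the argument produces.

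The step I expect to be the main obstacle is \eqref{whep4}: because $\{\vvv\in\III_0\}$ is only polynomially unlikely for $\vvv$ merely in $\III$, with an exponent that can be forced down to a fixed small constant, no direct union bound over $|\III|=\gO(D)$ vertices can work, and one must use the mutual independence of the $L_\vvv$'s to concentrate $|\III_0|$ itself and settle for a bound of the two-sided form stated. The one other thing to get right ---and it is the device that rescues \eqref{whep3} and supplies the per-vertex bound in \eqref{whep4}--- is to feed Theorem~\ref{Cher'} the complement count $m_\vvv$, whose mean is small (or at least bounded away from $3\kkk/4$), rather than $n_\vvv$, for which the naive deviation is a factor-$4$ lower tail from a mean of order $\kkk$ and gives only $n^{-\Theta(1)}$ with no slack.
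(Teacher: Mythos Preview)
Your proof is correct and follows essentially the same approach as the paper: hypergeometric tail bounds via Theorem~\ref{Cher'} for \eqref{whep2} and \eqref{whep3} (the latter via the complement count $m_\vvv=|N_K(\vvv)\cap\pee|$, exactly as the paper does), and independence of the $L_\vvv$'s to concentrate $|\III_0|$ for \eqref{whep4}. The only difference is cosmetic, in \eqref{whep4}: you split into cases according to whether $|\III|\lessgtr n^{c_4/2}$, whereas the paper handles both at once by setting $a=\max\{n^{-\ga/2}|\III|,\,4/\ga\}$ and applying Theorem~\ref{Cher'} (to the binomial that stochastically dominates $|\III_0|$) to get $\pr(|\III_0|>a)<(e\,n^{-\ga}|\III|/a)^a\le(e\,n^{-\ga/2})^{4/\ga}=o(1/n)$.
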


\mn
\emph{Proof.}
The cardinalities in \eqref{whep2} are
hypergeometric with means $O(\theta t)$
(since $d_\U(\vvv) \leq \theta D/2$ for $\vvv\in X\sm S$); so
(as for \eqref{NKK'}, with the $\eps$ in \eqref{firsttheta}
replaced by $\theta$; see \eqref{theta})
Theorem~\ref{Cher'} says that \eqref{whep2} holds w.h.e.p.
%(So here---cf. the proof of \eqref{NKK'}---we assume, say, 
%$\theta < e^{-4/\gd}$.)

Similarly, for $\vvv$ with $|N_F(\vvv)\cap \pee|<\theta D$, we have 
$\E |N_K(\vvv)\cap \pee|< 2\theta \kkk$
(the extra 2 allows for the possibility that $d_F(\vvv) = T_\vvv$
is a little smaller than $D$); so Theorem~\ref{Cher'} gives 
\[
\pr(\vvv\in \III_0) =\pr(|N_K(\vvv)\cap \pee| > 3\kkk/4) =o(1/n),
\]
and \eqref{whep3} holds w.h.e.p.

Finally, Theorem~\ref{T2.1} 
%(and the definition of $\III$) 
gives $\pr(\vvv\in \III_0) = n^{-\gO(1)}$ for $\vvv\in \III$; so
Theorem~\ref{Cher'} says \eqref{whep4} holds w.h.e.p.
[In more detail:  With $n^{-\ga}$ the upper bound on $\pr(\vvv\in \III_0)$,
let $a=\max\{n^{-\ga/2}|I|, 4/\ga\}$.  Then $\E |\III_0|\leq n^{-\ga}|I|$ and 
Theorem~\ref{Cher'} gives 
\[
\pr(|\III_0|>a) < (e n^{-\ga}|I|/a)^a \leq (en^{-\ga/2})^{4/\ga} = o(n^{-1}).]
\]
\qed

So we may assume the choices in (I) satisfy \eqref{whep2}-\eqref{whep4}.

\mn
(II)  Choose the sets $L_\vvv':=N_{K}(\vvv)\cap \pee$ for $\vvv\in \III_0$.
%, again recalling that $x\in \III_0$ means $|M_x'|> 3\kkk/4$.

\mn
\begin{claim}  \label{P0claim}
W.h.e.p.\ $K[\III_0,\pee]$ contains an $\III_0$-perfect matching.
\end{claim}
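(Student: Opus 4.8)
The plan is to verify Hall's condition for the bipartite graph $K[\III_0,\pee]$ and bound the chance of a violation by a union bound. Note first that the conditioning of step~(I) fixes, for each $\vvv\in\III_0$, the value $|N_{K}(\vvv)\cap\U|<\kkk/4$, so $|L_\vvv'|=|N_{K}(\vvv)\cap\pee|>3\kkk/4$, and that, given step~(I), the sets $L_\vvv'$ $(\vvv\in\III_0)$ are independent with $L_\vvv'$ uniform from $\C{T_\vvv\cap\pee}{|L_\vvv'|}$ (a uniform $\kkk$-subset of $T_\vvv$, intersected with $\pee$, is uniform of its realized size). I would also use two properties guaranteed by~(I): $|T_\vvv\cap\pee|\ge\theta D$ for every $\vvv\in\III_0$ (this is \eqref{whep3}, since $N_F(\vvv)=T_\vvv$) and $|\III_0|\le\max\{n^{-\gO(1)}|\III|,\,O(1)\}$ (this is \eqref{whep4}); combined with $|\III|\le|C|=O(D)$ and $|\pee|=O(D)$ (see \eqref{Psmall}), the latter gives in particular $|\III_0|=o(D)$ and $|\III_0|\,|\pee|=O(D^2)$.

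Singletons are harmless, since $|L_\vvv'|>3\kkk/4\ge1$ for every $\vvv\in\III_0$. For $a\ge2$: if some $A\sub\III_0$ with $|A|=a$ has $|N_{K}(A)\cap\pee|<a$, then $\bigcup_{\vvv\in A}L_\vvv'\sub B$ for some $B\sub\pee$ with $|B|=a-1$; and for fixed such $A,B$, using independence and the uniformity above together with the elementary $\C{x}{j}/\C{y}{j}\le(x/y)^j$ (for $0\le x\le y$) and $|T_\vvv\cap\pee|\ge\theta D$,
\[
\pr\big(\textstyle\bigcup_{\vvv\in A}L_\vvv'\sub B\big)=\prod_{\vvv\in A}\pr(L_\vvv'\sub B)\le\Big(\tfrac{a-1}{\theta D}\Big)^{3a\kkk/4}
\]
(a legitimate bound, as $a-1\le|\III_0|=o(D)<\theta D$). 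Summing over $A$ and $B$ then bounds the probability that $K[\III_0,\pee]$ contains no $\III_0$-perfect matching by $\sum_{a=2}^{|\III_0|}\C{|\III_0|}{a}\C{|\pee|}{a-1}\big(\tfrac{a-1}{\theta D}\big)^{3a\kkk/4}\le\sum_{a\ge2}\rho^a$, where $\rho:=|\III_0|\,|\pee|\,\big(|\III_0|/(\theta D)\big)^{3\kkk/4}$.

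The one mildly delicate point --- and where I expect the real work to be --- is showing $\rho$ is small, since the bound on $|\III_0|$ has to beat the factor $\C{|\pee|}{a-1}\le D^{O(a)}$: this works precisely because in the nontrivial branch of \eqref{whep4} the bound $|\III_0|\le n^{-\gO(1)}|\III|$ is a genuinely polynomial-in-$n$ saving, which, raised to the list-size power $3\kkk/4=\Theta(\log n)$, yields the super-polynomial factor $\exp[-\gO_{\gd}((\log n)^2)]$. Concretely, $\theta=e^{-9/\gd}$ is a constant and $|\III_0|\,|\pee|=O(D^2)=n^{O(1)}$ (using $D<n$): if $|\III_0|=O(1)$, then $\big(|\III_0|/(\theta D)\big)^{3\kkk/4}\le D^{-\kkk/4}=n^{-\gO(\log D)}$ for $D$ large, so $\rho\le n^{-10}$ and the sum (which now has $O(1)$ terms) is $o(D/n)$; and if $|\III_0|\le n^{-\gO(1)}|\III|$, then $|\III_0|/(\theta D)\le n^{-\gO(1)}$ for $n$ large, so $\big(|\III_0|/(\theta D)\big)^{3\kkk/4}\le\exp[-\gO_{\gd}((\log n)^2)]$, whence $\rho\le n^{O(1)}\exp[-\gO_{\gd}((\log n)^2)]<1/2$ and $\sum_{a\ge2}\rho^a\le2\rho^2=o(D/n)$. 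In either case the failure probability is $o(D/n)$, i.e.\ $K[\III_0,\pee]$ contains an $\III_0$-perfect matching w.h.e.p.
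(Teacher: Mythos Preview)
Your proof is correct and follows essentially the same route as the paper's: both verify Hall's condition for $K[\III_0,\pee]$ via a union bound over pairs $(A,B)$, with the key inputs being \eqref{whep3} (giving $|T_\vvv\cap\pee|\ge\theta D$) and \eqref{whep4} (giving $a\le|\III_0|\le\max\{n^{-\gO(1)}|\III|,O(1)\}$), so that $(a/(\theta D))^{3a\kkk/4}$ beats the polynomial-in-$D$ count of $(A,B)$'s. Your exposition is more detailed---you spell out the uniformity of $L_\vvv'$ after the step~(I) conditioning and carry out the two cases of \eqref{whep4} explicitly---but the argument is the same.
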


\nin
\emph{Proof.}
We again aim for the version of Hall's condition in \eqref{R2R3toshow}.
The probability that this is violated by some $A\sub \III_0$ of size $a$---that is, that there 
are $A\in \C{\III_0}{a}$ and $B\in \C{\pee}{a}$ with $N_{K}(A)\sub B$---is at most
\[
\C{|\III_0|}{a}\C{|\pee|}{a}\left(\frac{a}{\theta D}\right)^{3a\kkk/4} < 
D^{2a} \left(\max\{n^{-\gO(1)}, O((\theta D)^{-1})\}\right)^{3a\kkk/4} 
<n^{-\go(1)},
\]
where the $\theta D$ is given by \eqref{whep3}; $3\kkk/4$ is the lower bound on
$|N_{K}(\vvv)\cap \pee|$ for $\vvv\in \III_0$; 
and the first inequality uses $a\leq |\III_0|$ and \eqref{whep4}
(and, superfluously, $|I|\leq s \ll \theta D$; see \eqref{Tbd}).
The claim follows.

\qed

Let $\pee_0$ be the set of vertices of $\pee$ used in the matching of Claim~\ref{P0claim},
and $\pee_1=\pee\sm \pee_0$.

\mn
(III)  Choose $L_\vvv$'s for $X\sm \III$.  Here we want
to say that w.h.e.p.\ $K[X\sm \III,Y\sm \pee_0]$ contains an 
$(X\sm \III)$-p.m.\ \emph{using few vertices of} $\U$.

We first observe that w.h.e.p.
\beq{whep5}
|N_K(\vvv)\cap \pee_0|\leq \gd \ttt/3 \,\,\,\,\forall \vvv.
\enq
\emph{Proof.}
This is again like \eqref{NKK'}.  Since $|\pee_0|=|\III_0|$,
\eqref{whep4} gives (for any $\vvv$)
\[
\E |N_K(\vvv)\cap \pee_0| \leq 2\kkk|\pee_0|/D < \kkk \max\{n^{-\gO(1)},O(1/D)\},
\]
which with Theorem~\ref{Cher'} gives
$\pr(|N_K(\vvv)\cap \pee_0|> \gd \ttt/3) < e^{-\go(\log n)}$; so \eqref{whep5}
holds w.h.e.p.
\qed

\mn

So we may assume \eqref{whep5}.
We will also need the following standard observation
(see e.g.\ \cite[Ex.\ 1.4.3]{LP}).

\mn
\begin{prop}\label{fact}
If $\gS$ 
is bipartite on $\VVV\cup \WWW$, $A\sub \VVV$, $M_0$ is an $A$-p.m.\ of $\gS[A,\WWW]$,
and $M_1$ is a $V$-p.m.\ of $\gS$, then there is a $V$-p.m.\ $M$ of $\gS$ using all 
vertices of $\WWW$ used by $M_0$ (and some others if $A\neq V$).
\end{prop}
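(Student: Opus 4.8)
The plan is the standard alternating-path (symmetric-difference) argument; in fact the statement is just the bipartite Mendelsohn--Dulmage theorem, applied with $\VVV$ playing the ``saturated $\VVV$-side'' and $V(M_0)\cap\WWW$ the ``saturated $\WWW$-side'', but I will run the direct argument. Write $\WWW_0:=V(M_0)\cap\WWW$ for the set of $\WWW$-vertices covered by $M_0$ (so $|\WWW_0|=|A|$) and set $H:=M_0\triangle M_1$. Since $M_0$ and $M_1$ are matchings of $\gS$, every vertex has degree at most $2$ in $H$, so $H$ is a disjoint union of isolated vertices, alternating even cycles, and alternating paths (each edge of each path or cycle lying in exactly one of $M_0,M_1$).

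First I would identify the components of $H$ that meet $\WWW_0\sm V(M_1)$. A vertex is a path-endpoint of $H$ iff it is covered by exactly one of $M_0,M_1$; since $M_1$ is $\VVV$-perfect, at any $\VVV$-endpoint the incident $H$-edge lies in $M_1$, while at any $w\in\WWW_0\sm V(M_1)$ the incident $H$-edge lies in $M_0$. A short parity count of the alternating edge pattern along a path then rules out a path joining two vertices of $\WWW_0\sm V(M_1)$, and a path joining a vertex of $\WWW_0\sm V(M_1)$ to a $\VVV$-endpoint. Hence each $w\in\WWW_0\sm V(M_1)$ is the endpoint of a \emph{unique} path $P_w$ of $H$; the $P_w$-edge at $w$ lies in $M_0$; and the other endpoint $w'$ of $P_w$ lies in $\WWW$, is covered by $M_1$, and (being an endpoint) is not covered by $M_0$, so $w'\notin\WWW_0$. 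Distinct $P_w$'s are vertex-disjoint, being distinct components of $H$.

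Then I would take $M := M_1 \triangle \bigl( \bigcup_{w\in\WWW_0\sm V(M_1)} P_w \bigr)$. Flipping $M_1$ along a collection of pairwise-disjoint alternating paths again yields a matching, so $M$ is a matching of $\gS$ (its edges lie in $M_1$ or in some $P_w\sub M_0\cup M_1\sub\gS$). The flip along $P_w$ exchanges its $M_1$-edges for its $M_0$-edges: this makes $w$ covered, makes $w'\ (\notin\WWW_0)$ uncovered, and keeps every interior vertex of $P_w$ covered, since an interior vertex has degree $2$ in $H$ and hence carries an $M_0$-edge of $P_w$, which still covers it after the flip. Because both endpoints of every $P_w$ lie in $\WWW$, no $\VVV$-vertex is ever uncovered, so $M$ remains $\VVV$-perfect; and each vertex of $\WWW_0\cap V(M_1)$ is either untouched or interior to some $P_w$, hence still covered, while each $w\in\WWW_0\sm V(M_1)$ is covered once $P_w$ is flipped. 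Thus $M$ is a $\VVV$-perfect matching of $\gS$ covering all of $\WWW_0$, as required; the closing parenthetical is automatic, since a $\VVV$-perfect matching covers $|\VVV|\ge|A|=|\WWW_0|$ vertices of $\WWW$, strictly more when $A\ne\VVV$.

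There is no real obstacle here --- this is the textbook manipulation --- so the only points that call for care are the parity bookkeeping fixing the shape of $P_w$ (in particular that its far endpoint lies in $\WWW\sm\WWW_0$, rather than in $\VVV$ or in $\WWW_0$) and the check that performing all the flips simultaneously, along vertex-disjoint paths, preserves the matching property and keeps both $\VVV$ and $\WWW_0\cap V(M_1)$ covered.
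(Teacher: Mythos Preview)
Your argument is correct: the parity analysis pinning down the shape of each $P_w$ (both endpoints in $\WWW$, the far one in $\WWW\sm\WWW_0$) is right, and the simultaneous flip along these vertex-disjoint alternating paths does exactly what you claim. The paper itself does not prove this proposition; it simply records it as a standard fact and points to \cite[Ex.~1.4.3]{LP}, so your alternating-path/symmetric-difference argument is precisely the textbook proof one would supply if asked to fill in that citation.
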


We use this with $\VVV = X\sm \III$, $\WWW=Y\sm \pee_0$, $\gS = K[\VVV,\WWW]$, 
and $A$ an arbitrary subset of
$X\sm S$ of size $|C|-\theta D$ 
(which is less than $|X\sm S|$ by \eqref{Tbd}). To arrange that $M$ mostly avoids $\U$, we
aim for $M_0\sub K[A,\pee\sm \pee_0]$.

The w.h.e.p. existence of $M_0$ and $M_1$ will be given by Lemma~\ref{LMg'A}. 
In each case $B$ will be $F[\UUU,\ZZZ]$ for some $\UUU\sub X$ and $\ZZZ\sub Y$,
so, with $R$ TBA, we will have
\[
r(\vvv) =|\UUU|+R-|N_F(\vvv)\cap Z| = |\UUU|+R - |T_\vvv\cap Z|.
\]
Since $ X= C$ and $|C|=\DD+x$,
this will give \eqref{r(x)''} of Lemma~\ref{LMg'A} provided 
\beq{NGxZ}
|S_\vvv\cap \ZZZ|\geq |\UUU|+R -x\,\,\,\,\forall \vvv\in \UUU,
\enq
since then 
\[
|T_\vvv\cap Z|\geq |S_\vvv\cap Z|-|S_\vvv\sm T_\vvv| \geq |\UUU|+R -x-\nv
= |\UUU|+R -d_H(\vvv)
\]
gives $r(\vvv)\leq d_H(\vvv)$, and \eqref{r(x)''} 
follows from our assumptions on $d_H$ and $|H|$.

For $M_1$ we apply Lemma~\ref{LMg'A} with $B=F[X\sm \III,Y\sm \pee_0]$
(so $\UUU=X\sm \III$, $\ZZZ= Y\sm \pee_0$),
$R=|\III|-|\pee_0|$ ($=|\III_1|$), 
and $\vs =\gd/4$.
Then for the assumptions of the lemma:
since $I\sub S$, we have $|\UUU| = |C|-|\III|\geq |C|-s > (1-\theta) D$ 
(by \eqref{EDepsD} and \eqref{Tbd}, since
$\eps,\gz \ll \theta$);
$D\gg R$ (again since $I\sub S$ and we assume \eqref{Tbd});
$R\gg \max\{\gz D, 1\}$
is given by \eqref{JgzD} and \eqref{whep4};
\eqref{NGxZ} 
holds since $|S_v \cap Z| \geq \DD - |I_0| = |U| + R - x$; and
\eqref{whep5} gives $t_\vvv := |N_{K}(\vvv)\sm \pee_0| > (1+\vs)\log n$.

\nin

For $M_0$ we apply Lemma~\ref{LMg'A} with $B=F[A, \pee_1]$
(so $\UUU=A$, $\ZZZ= \pee_1$), $R=.49 \theta D$ 
and, again, $\vs=\gd/4$.  Then $|\UUU| =|C|-\theta D\geq (1 - 1.1\theta )D$
(again, since $|C|> (1-O(\eps)D$);
$(\theta D\ge)~ R\gg \max\{\gz D, 1\}$ (again since $\gz=O(\eps)\ll \theta$); 
for \eqref{NGxZ} we have, for $\vvv\in A$ ($\sub X\sm S$),
\begin{eqnarray*}
|S_\vvv\cap \pee_1|&\geq &
\DD- (d_\U(\vvv) + |S_\vvv\sm T_\vvv|+|\pee_0|) ~>~ \DD-.51\theta D\\
&=&|C|-x-.51 \theta D ~=~ |U|+\theta D -x -.51\theta D ~=~ |U|+R-x
\end{eqnarray*}
(with the second inequality given by \eqref{S}, \eqref{EDepsD} and 
the bound on $|\III_0|=|\pee_0|$ in \eqref{whep4});
and \eqref{whep2} and \eqref{whep5} give
(again, for $\vvv\in A$)
$t_\vvv :=|N_{K}(\vvv)\cap \pee_1| > (1+2\gd/3)t -|N_K(\vvv) \cap \pee_0|> (1+\vs)\log n$.

\mn

Proposition~\ref{fact} then gives an $(X\sm \III)$-perfect matching $M$, with 
\beq{whep6}
|\U_0:=\{\mbox{vertices of $\U$ used by $M$}\}|
\leq |X\sm \III|-|A| \leq |C|-|A| =\theta D.
\enq

\mn
(IV)  Choose $M_\vvv$'s for $\III_1$. 
(We're now only interested in their intersections with $\U\sm \U_0$).
The last thing we need to show is
\[
\mbox{w.h.e.p.\ $K[\III_1,\U\sm\U_0]$ contains an $\III_1$-p.m.}
\]

\nin
\emph{Proof.}
We again aim for \eqref{R2R3toshow}.
We claim that for $A\sub \III_1$ and $B\sub \U\sm \U_0$, both of size $a$,
\beq{NK'AU}
\pr(N_{K}(A)\cap (\U\sm \U_0)\sub B) < \left(\frac{a+|\U_0|}{D/3}\right)^{a \kkk/4}
< (4\theta)^{a\kkk/4} < n^{-(a/4)\log(1/\theta)}.
\enq

\mn
Here the first inequality uses
$d_\U(\vvv)\geq D/3$ for $\vvv\in \III$ (see \eqref{J})
and the fact that for $\vvv\in \III_1$, 
$N_{K}(\vvv)\cap \U$ is uniform from $\C{\U}{t_\vvv}$ 
for some $t_\vvv\geq \kkk/4$, and the second uses $|\U_0|\leq \theta D$ (see \eqref{whep6}) and
$a\leq |\III_1|\leq s \ll \theta D$ (see \eqref{Tbd}).

Multiplying \eqref{NK'AU} by the fewer than $D^{3a}$ possibilities for 
$(A,B)$ (we may assume $\U$ does not contain isolated
vertices, so $|\U|\leq D^2$) and summing over $a$ now
bounds the probability that \eqref{R2R3toshow} ever fails 
by (say) $n^{-0.2 \log (1/\theta)}$ ($=o(1/n)$).

\qed

This completes the proof of \eqref{K'contains} when  neither of \eqref{R1},
\eqref{Ssize} holds.

\qed

\mn
\emph{Proof of \eqref{notext} when \eqref{R1} does hold.}
Here, unlike in the situations above, we cannot expect a $C$-perfect matching in $K$
(or, for that matter, in $F$).
This is true even in the situation of \cite{APS} (where $S_\vvv=\gG =[\DD]$ $\forall \vvv$):
e.g.\ if $|C|>|\gG|$, or, more generally, if $|\cup_{\vvv\in C} T_\vvv| <|C|$.

Thus in this case we 
first perform a pairing step (at ``Process'' below)
that assigns common colors to some nonadjacent pairs of 
vertices of $C$, hoping to reach a situation where a matching from the remaining
vertices to the unused colors \emph{is} likely.  
This requires a few parameters, as follows.

Let $\eta = \eta_\zeta$ satisfy
\beq{etadef} \max \{ \zeta, 1/D \} \ll \eta \ll \zeta/\varepsilon \,\,(=O(1))
\enq

\nin
and set
\begin{align}
\label{qdef} q = 1 - \exp\left[-\frac{\theta \zeta k}{18 \bbb \eps}\right] \gg \eta,
\end{align}
where $\bbb$ is from \eqref{EDepsD}, and the inequality uses $\theta \zeta k/\eps \gg 
\theta \eta k = \omega(\eta)$
(see (\ref{etadef})) if $\theta \zeta k < 0.1 \bbb \eps$,
and $\eta \ll 1$ otherwise.
Let
\begin{align}
\label{defm} m = K \eta D/q,
\end{align}
with $K$ satisfying
\begin{align}
\label{Kdef} \eta \ll \eta K \ll q \ll (\zeta D/\eps) K \eta  
\end{align}
(equivalently 
$\max\{1,(\eps/(\gz D))q/\eta\}\ll K\ll q/\eta$, which is 
possible since $q \gg \eta$ and $\zeta D \gg \eps$; see \eqref{qdef} and \eqref{epszeta}).
%We could, e.g., take $K = \sqrt{q/\eta}$ if $\zeta D/\eps \geq q/\eta$, and otherwise 
%$K = \frac{q \sqrt{\eps}}{\sqrt{\zeta D} \eta}$.)
In particular,
\begin{align}\label{mllD} 
m \ll D.
\end{align}

Let 
\[    %\begin{align}
\label{hgam} H_\gamma = \{uv \in H : \gamma \in T_u \cap T_v\}.
\]    %\end{align}
Order $\mathcal{P} = \{\gamma_1, \gamma_2, \cdots\}$ so that
$|H_{\gamma_1}| \geq |H_{\gamma_2}| \geq \cdots$.
\begin{obs}\label{OHgamma}
For any $i \leq m, |H_{\gamma_i}| \geq 2\theta \zeta D^2/3.$
\end{obs}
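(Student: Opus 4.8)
The plan is a one-shot averaging argument off hypothesis \eqref{R1}. Recalling $H_\gamma=\{uv\in H:\gamma\in T_u\cap T_v\}$ and swapping the order of summation, \eqref{R1} says exactly
\[
\sum_{\gamma\in\pee}|H_\gamma|=\sum_{uv\in H}|T_u\cap T_v\cap\pee|>\theta\gz D^3 ,
\]
while trivially $|H_\gamma|\le|H|=\gz D^2$ for every $\gamma$ (as $H_\gamma\sub H$). So $(|H_\gamma|:\gamma\in\pee)$ is a family of nonnegative numbers with large sum, each capped at $\gz D^2$, and the ordering $|H_{\gamma_1}|\ge|H_{\gamma_2}|\ge\cdots$ lists the large ones first.

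First I would isolate the ``heavy'' colors $\pee'=\{\gamma\in\pee:|H_\gamma|\ge 2\theta\gz D^2/3\}$. Each $\gamma\in\pee\sm\pee'$ contributes less than $2\theta\gz D^2/3$, and $|\pee|<(1+2\vr)D$ by \eqref{Psmall}, so these contribute less than $(1+2\vr)D\cdot 2\theta\gz D^2/3$; subtracting from the total above,
\[
|\pee'|\,\gz D^2\ \ge\ \sum_{\gamma\in\pee'}|H_\gamma|\ >\ \theta\gz D^3-(1+2\vr)D\cdot\frac{2\theta\gz D^2}{3}\ =\ \Bigl(1-\frac{2(1+2\vr)}{3}\Bigr)\theta\gz D^3 .
\]
Since $\vr=\gd/10$ is small (we may assume $\gd$ small, say $\gd\le1$, so $\vr\le1/10$ and $1-2(1+2\vr)/3\ge1/5$), this yields $|\pee'|>\theta D/5$.

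It remains to pass from $\pee'$ back to the ordered list. Monotonicity of $(|H_{\gamma_i}|)$ makes $\pee'$ an initial segment $\{\gamma_1,\dots,\gamma_{|\pee'|}\}$: if $\gamma_i\notin\pee'$ for some $i\le|\pee'|$ then $|H_{\gamma_j}|<2\theta\gz D^2/3$ for all $j\ge i$, forcing $|\pee'|\le i-1$, a contradiction. Hence $|H_{\gamma_i}|\ge2\theta\gz D^2/3$ for every $i\le|\pee'|$, and the observation follows once we check that $m\le\theta D/5$ (which is $<|\pee'|$, and in particular makes $\gamma_m$ meaningful). For that, \eqref{defm} gives $m=K\eta D/q$ and \eqref{Kdef} gives $\eta K\ll q$, so $K\eta/q$ is as small as needed relative to the fixed constant $\theta/5$ — here the finitary ``$\ll$'' convention treats $\theta=e^{-9/\gd}$ as a constant — whence $m<\theta D/5$.

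The argument is pure counting and I do not expect a genuine obstacle; the one place that wants care is the final comparison, i.e.\ confirming that the numerical constant $K\eta/q$ coming out of the parameter chain \eqref{etadef}-\eqref{Kdef} really sits below the fixed constant produced by $1-2(1+2\vr)/3$. This is exactly the bookkeeping the ``$\ll$'' notation is designed for, and it is presumably why the constant $2/3$ — rather than something closer to $1$ — appears in the statement.
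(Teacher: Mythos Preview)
Your proof is correct and uses essentially the same averaging argument as the paper --- the same four ingredients ($\sum_{\gc\in\pee}|H_\gc|>\theta\gz D^3$, the cap $|H_\gc|\le\gz D^2$, $|\pee|<(1+2\vr)D$, and $m\ll\theta D$) combined in the obvious way. The paper's version just packages it as a single chain of inequalities bounding $|H_{\gc_m}|$ directly by the tail average, rather than first counting the heavy colors.
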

\begin{proof}
Since $\sum_{\gamma \in \mathcal{P}} |H_\gamma| > \theta \zeta D^3$
and $|H_\gamma| \leq \zeta D^2$ $\forall \gc$,
\[
|H_{\gamma_i}| ~\geq~ |H_{\gamma_m}| 
~\geq~ \frac{\theta \zeta D^3 - (m-1)\zeta D^2}{|\mathcal{P}| -m+1} 
~\geq~ (\theta \zeta D^2 - m \zeta D)/(1+2\vr) > 2\theta\gz D^2/3,
\]
where the third inequality uses $|\mathcal{P}| < (1+2\vr)D$
(see \eqref{Psmall}) and the last uses $m \ll \theta D$ (see (\ref{mllD})).

\end{proof}

Write $d_H(z,\gamma)$ for $|\{w \in C : wz \in H_\gamma\}|$.
Noting that clusters other than $C$ play no 
role here, we now repurpose the notation $C_j$:

\mn
\textbf{Process:} Set $C_0 = C, H_0 = H$ and $N_0 = \emptyset$,
and for $i = 1,\cdots,m,$ reveal $\{v \in C : \gamma_i \in L_v\}$, set
\begin{align}
\label{defJi} J_i = \{v \in C_{i-1} : \gamma_i \in L_v\},
\end{align}

\nin
and do:

\begin{itemize}
	\item[(I)] If $H[J_i] \neq \emptyset$, choose (arbitrarily) $x_i y_i \in H[J_i]$ and set
	\begin{align*}
	\sigma(x_i) = \sigma(y_i) = \gamma_i, N_i = N_{i-1} \cup \{x_i y_i\}, 
	C_i = C_{i-1} \setminus \{x_i, y_i\}, \text{ and } H_i = H_{i-1} - \{x_i, y_i\} \,\,\,(= H[C_i])
	\end{align*}
	\item[(II)] If $H[J_i] = \emptyset$ and $J_i \neq \emptyset,$ choose $z_i \in J_i$
with $d_H(z_i)$ minimum and set
	\begin{align*}
	\sigma(z_i) = \gamma_i, N_i = N_{i-1}, C_i = C_{i-1} \setminus \{z_i\}, 
	\text{ and } H_i = H_{i-1} - z_i \,\,\,(=H[C_i]).
	\end{align*}
	\item[(III)] If $J_i = \emptyset$ do nothing. 
	(Updates are unnecessary here as we will take the process to have failed.)
\end{itemize}

Say the process \emph{succeeds} if 

\nin
(S1) $|L_v \cap \{\gamma_1, \cdots , \gamma_m\}| \leq 0.1 \delta \log n  \forall v \in C$;

\nin
(S2) $J_i\neq\0$ $\forall i\in [m]$, and for $i$ with $H[J_i] = \emptyset$,
$d_H(z_i) < 6 \zeta D/\gd$;

\nin
and 

\nin
(S3) $|N_m| \geq \eta D.$

\begin{lemma}\label{Lsucc}
The process succeeds with probability $1-o(D/n)$.
\end{lemma}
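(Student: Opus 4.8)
The plan is to run the Process while revealing, at step $i$, the ``column'' $R_i:=\{v\in C:\gamma_i\in L_v\}$ (equivalently, for each $v$, whether $\gamma_i\in L_v$), conditioning on the history $\mathcal{H}_{i-1}$ of columns $R_1,\dots,R_{i-1}$, and verifying (S1)--(S3) by a mix of Chernoff bounds and, for (S3), a concentration argument for the number of case-(I) steps. For (S1), fix $v\in C$: then $|L_v\cap\{\gamma_1,\dots,\gamma_m\}|$ is hypergeometric with mean $\le mk/|T_v|\approx mk/D=K\eta k/q$, which is $o(k)$ since $K\eta\ll q$; so Theorem~\ref{Cher'} gives $\pr(|L_v\cap\{\gamma_1,\dots,\gamma_m\}|>0.1\delta\log n)=n^{-\go(1)}$, and a union bound over the $O(D)$ vertices of $C$ yields (S1) with probability $1-o(D/n)$. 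We henceforth work on that event; equivalently, we take $\mathcal{H}_{i-1}$ to carry the information $|L_v\cap\{\gamma_1,\dots,\gamma_{i-1}\}|\le 0.1\delta\log n$ for all $v$. The key uniform consequence is that for each $v$ with $\gamma_i\in T_v$,
\[
\pr(\gamma_i\in L_v\mid\mathcal{H}_{i-1})=\frac{k-|L_v\cap\{\gamma_1,\dots,\gamma_{i-1}\}|}{|T_v|-(i-1)}\ \geq\ \frac{(1+0.9\delta)\log n}{D}
\]
(using $|T_v|\le\DD$ and $i-1<m\ll D$; see \eqref{mllD}), these events being independent across $v$.

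For (S2): since $\gamma_i\in\pee$ we have $d_{\gamma_i}>b=D/(1+\vr)$, so $W_i:=\{v\in C:\gamma_i\in T_v\}$ has more than $b$ elements and, as at most $2m\ll D$ vertices have been removed before step $i$, $|W_i\cap C_{i-1}|>(1-o(1))D/(1+\vr)$. Let $B=\{v\in C:d_H(v)\ge 6\zeta D/\delta\}$; from $\sum_v d_H(v)=2|H|=2\zeta D^2$ we get $|B|\le\delta D/3$ (we may reduce to $\delta$ below a small constant, as noted after Theorem~\ref{CKK}; moreover when $\zeta$ exceeds a suitable constant multiple of $\eps\delta$ one has $B=\emptyset$ outright, since $d_H(v)<2\bbb\eps D$). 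Then the conditional probability that $J_i\subseteq B$ is at most $\pr\bigl(J_i\cap(W_i\cap C_{i-1}\setminus B)=\emptyset\mid\mathcal{H}_{i-1}\bigr)\le\exp[-\tfrac{(1+0.9\delta)\log n}{D}\,|W_i\cap C_{i-1}\setminus B|]=n^{-(1+\gO(\delta))}$, the exponent exceeding $1$ because $(1+0.9\delta)/(1+\vr)>1$ (recall $\vr=\delta/10$) and $|W_i\cap C_{i-1}\setminus B|\ge(1-o(1))D/(1+\vr)-\delta D/3=\gO(D)$. Summing over $i\in[m]$ ($m\le D$) and adding the $o(D/n)$ from (S1): with probability $1-o(D/n)$, every $J_i$ is nonempty and, when $H[J_i]=\emptyset$, the minimum-$d_H$ vertex $z_i\in J_i$ satisfies $d_H(z_i)<6\zeta D/\delta$ — i.e.\ (S1) and (S2) hold.

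For (S3), the crux, put $Y_i=\mbone\{H[J_i]\neq\emptyset\}$, so $|N_m|=\sum_{i=1}^m Y_i$. First I would establish $\pr(Y_i=1\mid\mathcal{H}_{i-1})\ge q$ for every admissible history, via Janson's inequality (Theorem~\ref{TJanson}): by Observation~\ref{OHgamma}, $|H_{\gamma_i}|\ge\tfrac23\theta\zeta D^2$; since $\gD_H=O(\eps D)$, the losses to $H_{\gamma_i}$ from the $\le 2m$ removed vertices are $O(m\eps D)$, which the hierarchy \eqref{etadef}--\eqref{Kdef} keeps $\ll\theta\zeta D^2$, so $H_{\gamma_i}[C_{i-1}]$ still has $\gO(\theta\zeta D^2)$ pairs; each such non-edge lands in $J_i$ with conditional probability $\gtrsim(\log n/D)^2$, giving expected count $\mu_i\gtrsim\theta\zeta(\log n)^2$, while $\ov{\gD}\le\mu_i+\gD_H\cdot 2|H_{\gamma_i}|\cdot(\log n/D)^3=O(\eps\theta\zeta(\log n)^3)$; hence $\pr(Y_i=0\mid\mathcal{H}_{i-1})\le\exp[-\mu_i^2/\ov{\gD}]\le\exp[-\theta\zeta k/(18\bbb\eps)]=1-q$, the constant $18\bbb$ in \eqref{qdef} being calibrated to absorb the implied constants here. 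Consequently $|N_m|$ stochastically dominates $\mathrm{Bin}(m,q)$, so $\E|N_m|\ge mq=K\eta D$, which exceeds the target $\eta D$ of (S3) by the factor $K=\go(1)$.

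The remaining task — showing $\pr(|N_m|<\eta D)=o(D/n)$ — is the main obstacle, and is where the delicate choices \eqref{etadef}--\eqref{Kdef} are spent. The plan is to view $|N_m|$ as a function of $(L_v)_{v\in C}$, check that the Process is Lipschitz in the sense required by Lemma~\ref{martConc} (flipping a single $L_v$ affects $O(1)$ of the relevant steps, and all but $O(1)$ coordinates are ``inactive''), and then apply the martingale bound, exploiting the $\go(1)$-factor cushion between $\E|N_m|\ge K\eta D$ and $\eta D$ together with $\log n<D$. The genuinely hard point is reconciling this cushion with the fact that the per-step gain $q$ is only a small constant in the regime $\theta\zeta\log n/\eps=O(1)$, and with the non-trivial (though lower-order) erosion of $H_{\gamma_i}$ caused by vertices already removed; this is exactly what the explicit relations among $\eta,q,K,m$ are designed to balance, and the full argument is carried out in Section~\ref{Process}. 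Combining the three parts gives success probability $1-o(D/n)$.
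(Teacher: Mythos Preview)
Your arguments for (S1) and (S2) are fine and match the paper's. The gap is in (S3), in two places.

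\textbf{The loss bound in the Janson step is wrong.} You bound the erosion of $H_{\gamma_i}$ by $O(m\eps D)$, using only $\Delta_H=O(\eps D)$ and that at most $2m$ vertices have been removed, and then assert this is $\ll\theta\zeta D^2$. That inequality would require $m\ll\zeta D/\eps$, which the hierarchy \eqref{etadef}--\eqref{Kdef} does \emph{not} give: all you know is $m\ll D$, and $\zeta D/\eps$ can be far smaller than $D$ (e.g.\ when $\zeta$ is near $\zeta_0$, so $\zeta D$ is only slightly above $\eps$). The paper avoids this by a finer accounting: it conditions on the event $R_i$ that (S2) has held so far \emph{and} on $|N_{i-1}|<\eta D$; then the case-(I) removals number at most $2\eta D$ and each costs at most $\bbb\eps D$, while the case-(II) $z_j$'s each cost at most $6\zeta D/\delta$ (this is precisely why (S2) is formulated as it is). The resulting loss $2\eta D\cdot\bbb\eps D+m\cdot 6\zeta D/\delta$ \emph{is} $\ll\theta\zeta D^2$ via \eqref{etadef} and \eqref{mllD}. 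Without this, your lower bound $\pr(Y_i=1\mid\cdot)\ge q$ is unjustified, so the stochastic domination by $\mathrm{Bin}(m,q)$ you state next is not available either.

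\textbf{The final tail bound is overcomplicated and the Lipschitz claim is false.} You already note that $|N_m|$ dominates $\mathrm{Bin}(m,q)$ once the per-step bound holds; that domination is all you need. The paper simply applies Theorem~\ref{T2.1} to $\mathrm{Bin}(m,q)$ (with a separate naive estimate when $\theta\zeta k/\eps>1$ and hence $q=\Omega(1)$). Your proposed martingale route via Lemma~\ref{martConc}, with ``flipping a single $L_v$ affects $O(1)$ steps,'' does not work: the Process is adaptive, so a single change in $L_v$ can alter which vertex is removed at one step and thereby all subsequent $C_j$'s and $J_j$'s---there is no $O(1)$ Lipschitz bound here.
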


Lemma~\ref{Lsucc} is proved in Section~\ref{Process}.  For
now we assume 
the process has finished successfully (producing an updated $\gs$), and continue.
As promised earlier (see following Lemma~\ref{LMg'A}), 
we treat this in tandem with the case that $\gz$ is small, noting that 
``$\gz$ large'' now means that we are 
post-process in ``regime'' \eqref{R1}.

In either case we use $(C',\gG')$ for what's left of $(C,\gG)$; 
so $(C',\gG')$ is $(C,\gG)$ if $\gz$ is small
and $(C_m,\gG\sm\{\gc_1\dots \gc_m\})$ if $\gz$ is large.
It is enough to show 
\beq{K'C'G'}
\mbox{w.h.e.p. $K[C',\gG']$ contains a $C'$-perfect matching,}
\enq
for which we use Lemma~\ref{LMg'A}
with $B=F[C',\gG']$, $K=K[C',\gG']$
and
\[
R=\left\{\begin{array}{ll}
0&\mbox{if $\gz$ is small,}\\
N_m&\mbox{if $\gz$ is large,}
\end{array}\right.
\]
and just need to check the assumptions of the lemma:

For $J := |C'|$ we have
\[
J=\left\{\begin{array}{ll}
|C|= D\pm O(\eps D)&\mbox{if $\gz$ is small,}\\
|C| - (m+R) \geq D-(m+R)\pm O(\eps D)&\mbox{if $\gz$ is large;}
\end{array}\right.
\]
so in either case \eqref{mllD} (and $R\leq m$) give
$J\approx D$ and in particular $J = (1\pm 1.1\theta)D$.

As to assumptions on $R$, for (a) there is nothing to show, and for (b) we have,
using \eqref{mllD}, (S3) and \eqref{etadef},
\[
D\gg m\geq R\geq \eta D \gg \max \{ \zeta, 1/D \}.
\]

For the conditions \eqref{r(x)''} on $r^+(v)$,
recall from \eqref{Csize} and \eqref{nabla} that $|C|=\DD+x$ and $\nabla_v = |\nabla_G(v,V\sm C)|$, 
noting that
\beq{dHv}
d_H(v) = \nabla_v +x
\enq
and
\[
d_B(v) \left\{\begin{array}{ll}
=d_F(v) \geq \DD -\nabla_v
&\mbox{if $\gz$ is small,}\\
\geq d_F(v)-m \geq \DD -m -\nabla_v&\mbox{if $\gz$ is large.}
\end{array}\right.
\]
Thus
\[
r(v) := J+R - d_B(v) \leq
\left\{\begin{array}{ll}
\DD+x -(\DD-\nabla_v) = \nabla_v +x
&\mbox{if $\gz$ is small,}\\
\DD+x-m - (\DD -m -\nabla_v) =\nabla_v+x
&\mbox{if $\gz$ is large;}
\end{array}\right.
\]
so in either case $r^+(v) \leq \nabla_v+x = d_H(v)$
and \eqref{r(x)''} follows from 
$|H|=\gz D^2$ and $d_H(v)\leq \bbb\eps D$
$\forall v$ (see \eqref{|H|} and \eqref{EDepsD}).

\mn

This completes the proofs of \eqref{K'contains} and \eqref{notext}.

\section{Proof of Lemma~\ref{Lsucc}}\label{Process}

We first dispose of the easy (S1), for which
Theorem~\ref{Cher'} gives
\beq{S1holds}
\mathbb{P}(|L_v \cap \{\gamma_1, \cdots, \gamma_m\}| > 0.1 \delta \log n)
~<~
\left((1+\delta) em/(0.1 \delta D) \right)^{0.1 \delta \log n} 
~=~ o(1/n),
\enq
where we use $m \ll D$ (see (\ref{mllD})) for the second inequality
(with room).

\mn

Set
\begin{align}
\label{defQi} Q_i = \{ |L_v \cap \{\gamma_1, \cdots, \gamma_{i-1} \}| < 0.1 \delta \log n \ \forall v \in C\};
\end{align}
thus $Q_1 \supseteq \cdots \supseteq Q_m$ and $Q_m$ contains (S1).
Setting $p = (k-0.1 \delta \log n)/D = (1+0.9 \delta) \log n/D,$ we have 
\beq{getv} 
\mbox{$\mathbb{P}(v \in J_i | Q_i) > p\,\,$ 
for each $v \in C_{i-1}$ with $\gamma_i \in T_v$.}
\enq
(In fact, \eqref{getv} holds given \emph{any} history through step $i-1$ that satisfies $Q_i$.
The reason for the $Q_i$'s is that conditioning as in (\ref{getv}) becomes problematic
if we replace $Q_i$ by (S1).)
\begin{prop}
\label{s2prop} The probability of violating (S2) is $o(D/n)$.
\end{prop}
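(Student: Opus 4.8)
The plan is to bound the chance that (S2) fails by treating its two clauses separately --- ``some $J_i$ is empty'' and ``some type-(II) step deletes a $z_i$ with $d_H(z_i)\ge 6\zeta D/\delta$'' --- by one device, which is exactly what the $Q_i$'s were set up for: condition on the full history $\mathcal H_{i-1}$ of the revelations at steps $1,\dots,i-1$ (note $Q_i$ is $\mathcal H_{i-1}$-measurable), assume $\mathcal H_{i-1}\models Q_i$, and observe that the bad event at step $i$ forces each of many prescribed vertices of $C_{i-1}$ to miss $\gamma_i$ from its list --- an event of probability $n^{-1-\Omega(\delta)}$ by \eqref{getv} --- after which one sums over $i\in[m]$ and adds $\pr(\neg Q_m)$.

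First I would record the conditional-independence step. Given $\mathcal H_{i-1}$, the events $\{\gamma_i\in L_v\}$ ($v\in C$) are independent (lists are chosen independently across vertices, and $\mathcal H_{i-1}$ records, per vertex $v$, only $L_v\cap\{\gamma_1,\dots,\gamma_{i-1}\}$), and by \eqref{getv}, in the form stated there for arbitrary $Q_i$-histories, each has conditional probability $>p:=(1+0.9\delta)\log n/D$ whenever $\gamma_i\in T_v$. Since $\gamma_i\in\pee$ we have $d_{\gamma_i}=|\{v\in C:\gamma_i\in T_v\}|>b=D/(1+\vr)$, and at most $2(i-1)<2m\ll D$ vertices have been removed from $C$ (see \eqref{mllD}), so at least $b-2m=(1-\vr-o(1))D$ vertices $v\in C_{i-1}$ have $\gamma_i\in T_v$. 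Hence, for every $\mathcal H_{i-1}\models Q_i$,
\[
\pr(J_i=\emptyset\mid\mathcal H_{i-1})\le(1-p)^{\,b-2m}\le\exp[-p(b-2m)]\le n^{-(1+0.9\delta)(1-\vr)(1-o(1))}\le n^{-1-\delta/4}
\]
(the last step using $\vr=\delta/10$ and $\delta$ small). Since $Q_i$ is $\mathcal H_{i-1}$-measurable this gives $\pr(J_i=\emptyset,\,Q_i)\le n^{-1-\delta/4}$; summing over $i$ and adding $\pr(\neg Q_m)$ --- which is $o(D/n)$ by the Chernoff bound of \eqref{S1holds} applied to $|L_v\cap\{\gamma_1,\dots,\gamma_{m-1}\}|$ together with a union over $v\in C$ --- yields $\pr(\exists\,i\le m:J_i=\emptyset)\le o(D/n)+m\,n^{-1-\delta/4}=o(D/n)$.

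For the second clause I would put $W=\{v\in C:d_H(v)\ge 6\zeta D/\delta\}$ and note, from $\sum_{v\in C}d_H(v)=2|H|=2\zeta D^2$ (recall $|H|=\zeta D^2$), that $|W|\le\delta D/3$. The key observation is that a type-(II) step with $d_H(z_i)\ge 6\zeta D/\delta$ forces $\emptyset\ne J_i\subseteq W$: since $z_i$ is a minimum-degree vertex of $J_i$, every vertex of $J_i$ has $d_H$-degree $\ge 6\zeta D/\delta$. In particular no $v\in C_{i-1}\setminus W$ with $\gamma_i\in T_v$ has $\gamma_i\in L_v$, and there are at least $b-2m-|W|=(1-\vr-\delta/3-o(1))D$ such $v$; so the identical computation gives, for $\mathcal H_{i-1}\models Q_i$,
\[
\pr(\emptyset\ne J_i\subseteq W\mid\mathcal H_{i-1})\le n^{-(1+0.9\delta)(1-\vr-\delta/3)(1-o(1))}\le n^{-1-\delta/4}
\]
for $\delta$ small. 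Summing over $i\le m$ and adding $\pr(\neg Q_m)$ again bounds this contribution by $o(D/n)$, and adding the two bounds proves the proposition.

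The one step that needs an idea is the reduction in the third paragraph: recognizing that a type-(II) deletion of a high-$H$-degree vertex confines $J_i$ to the set $W$ of high-degree vertices, and that $|W|$ is controlled by the edge count $|H|=\zeta D^2$; once that is in hand the two probability estimates are the same. The only real care is in the conditioning --- one must work with the $Q_i$'s rather than (S1) so that \eqref{getv} is available, and check the $\mathcal H_{i-1}$-conditional independence across vertices. Quantitatively one just needs $b-2m-|W|\ge(1-c\delta)D$ with $c$ small enough that $p\,(b-2m-|W|)>(1+\Omega(\delta))\log n$, which is why it is convenient --- and, by the monotonicity of $L$-colorability in the list size, harmless --- to treat $\delta$ as small.
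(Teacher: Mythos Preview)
Your proof is correct and is essentially the paper's argument. The only difference is presentational: you treat the two clauses of (S2) as separate events, whereas the paper unifies them by observing that \emph{any} violation of (S2) at step $i$ implies $J_i\cap Z=\emptyset$ (with $Z=C\setminus W$ in your notation), and then bounds $\pr(J_i\cap Z=\emptyset\mid Q_i)$ once via the same count $|Z_i|>b-2m-\delta D/3$ that you use for your second clause. Your explicit remark about assuming $\delta$ small (via monotonicity in list size) is also implicit in the paper's numerics.
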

\begin{proof}
Let $Z = \{z \in C : d_H(z) < 6\zeta D/\delta\}$. With
\begin{align}
\label{Ei} E_i = \{J_i \cap Z = \emptyset\}
\end{align}
($i \in [m]$), we want
\begin{align}
\label{wantnoEi} \mathbb{P}(\cup E_i) = o(D/n).
\end{align}
Using \eqref{S1holds} and $Q_i\supseteq Q_m$, we have
\begin{align}
\label{conditiononQi} \mathbb{P}(\cup E_i) \leq \mathbb{P}(\ov{Q}_m) + 
\sum_i \mathbb{P}(E_i \land Q_m) < o(1/n) + \sum_i \mathbb{P}(E_i | Q_i);
\end{align}
so, since $m \ll D$, (\ref{wantnoEi}) will follow from (e.g.)
\begin{align}
\label{EgivQbd} \mathbb{P}(E_i | Q_i) < \exp[-(1+ \delta/3) \log n].
\end{align}
Since
\begin{align}
\label{handshakeineq} |C \setminus Z| \leq \frac{2 \zeta D^2}{6 \zeta D/\delta} = \delta D/3,
\end{align}
we have, with $Z_i = \{z \in Z \cap C_{i-1} : \gamma_i \in T_z\}$,
\begin{align*}
|Z_i| > b-2m -\delta D/3 > (1-\delta/2) D,
\end{align*}
where the $b$ uses $\gc_i\in \pee$ (see the line before Observation \ref{OHgamma},
and \eqref{pee.def} for the definition of $\pee$),
the $2m$ bounds the number of vertices removed in the process so far, and the $\delta D/3$ is
given by (\ref{handshakeineq}) (and we use $\vr = \delta/10$; see \eqref{vrb}).
Then for (\ref{EgivQbd}) we apply (\ref{getv}), yielding
\begin{align*}
\mathbb{P}(E_i | Q_i) < (1-p)^{|Z_i|} < \exp[-(1+\delta/3) \log n].
\end{align*}
\end{proof}

Now aiming for (S3), define (for $i \in [m]$)
\begin{align*}
R_i = \{ d_H(z_j) < 6 \zeta D/\delta \,\,\, \mbox{for every $ z_j $ with $ j < i$}\},
\end{align*}
noting that Proposition \ref{s2prop} gives
\begin{align}
\label{Riineq} \mathbb{P}(\cap R_i) = 1-o(D/n).
\end{align}
As was true for the $Q_i$'s, (see \eqref{conditiononQi}), \eqref{Riineq} will allow us to focus
on the choices of $J_i$'s under conditioning on $R_i$'s
(as well as $Q_i$'s), so that the next assertion becomes the main point.
\begin{prop} \label{reasonforq}
For each $i \in [m],$ if $|N_{i-1}| < \eta D$, then
\begin{align}
\label{probEmpty} \mathbb{P}(H[J_i] = \emptyset | Q_i R_i) < 
\exp \left[ - \frac{\theta \zeta k}{18\bbb\eps} \right] \ (= 1-q\text{; see (\ref{qdef})}).
\end{align}
\end{prop}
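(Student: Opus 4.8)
The plan is to condition on the entire history of the process through step $i-1$ and reduce the claim to a single application of Janson's inequality (Theorem~\ref{TJanson}). So I would fix a history $h$ through step $i-1$ that is consistent with $Q_i$, $R_i$ and $|N_{i-1}|<\eta D$, and bound $\mathbb{P}(H[J_i]=\emptyset\mid h)$; since $h$ is arbitrary, this gives \eqref{probEmpty}. Write $H'=H_{\gamma_i}[C_{i-1}]$ for the graph of pairs in $C_{i-1}$ still able to receive colour $\gamma_i$. Every vertex in an edge of $H'$ has $\gamma_i\in T_v$, and any edge $vw$ of $H$ with $v,w\in J_i$ automatically lies in $H'$ (because $\gamma_i\in L_v\cap L_w\subseteq T_v\cap T_w$), so $\{H[J_i]=\emptyset\}$ is exactly the event that $J_i$ is independent in $H'$. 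By \eqref{getv} each vertex of $V(H')$ lies in $J_i$ (conditionally on $h$) independently with probability at least $p:=(1+0.9\delta)\log n/D$; as "independent in $H'$" is a decreasing event, $\mathbb{P}(H[J_i]=\emptyset\mid h)$ is at most $\mathbb{P}(U_p\text{ is independent in }H')$, where $U=V(H')$ and $U_p$ is the ($p$-random) subset of $U$ as in Theorem~\ref{TJanson}.

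Next I would lower bound $e(H')$. By Observation~\ref{OHgamma}, $|H_{\gamma_i}|\ge 2\theta\zeta D^2/3$. The vertices removed during the first $i-1$ steps are the at most $2|N_{i-1}|<2\eta D$ endpoints of paired edges (case (I)), each of $H$-degree at most $\bbb\eps D$ by \eqref{EDepsD}, together with the at most $m$ vertices coloured in case (II), each of $H$-degree $<6\zeta D/\delta$ by the conditioning on $R_i$; hence at most $2\eta D\cdot\bbb\eps D+m\cdot 6\zeta D/\delta$ edges of $H_{\gamma_i}$ are lost, and since $\eta\ll\zeta/\eps$ and $m\ll D$ (with $\theta,\delta$ treated as constants) this is $\ll\theta\zeta D^2$. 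So $e(H')\ge\theta\zeta D^2/2$. I would also record $\Delta(H')\le\Delta(H)\le\bbb\eps D$ (again \eqref{EDepsD}).

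Then I would apply Theorem~\ref{TJanson} with ground set $U$, random set $U_p$, and the sets $A_e=e$ as $e$ ranges over the edges of $H'$ (viewed as $2$-subsets of $U$). Here $\mu=e(H')p^2$, and the only contributions to $\overline\Delta$ beyond the diagonal come from ordered pairs of distinct edges sharing one vertex, each contributing $p^3$, so $\overline\Delta\le\mu+p^3\sum_{x\in U}d_{H'}(x)^2\le\mu+p^3\cdot 2\bbb\eps D\,e(H')=e(H')p^2(1+2\bbb\eps D p)$. For $D$ (hence $n$) large, $2\bbb\eps D p=2\bbb\eps(1+0.9\delta)\log n\ge 2$, so
\[
\frac{\mu^2}{\overline\Delta}\ \ge\ \frac{e(H')\,p^2}{1+2\bbb\eps D p}\ \ge\ \frac{e(H')\,p}{3\bbb\eps D}\ \ge\ \frac{\theta\zeta D^2/2}{3\bbb\eps D}\cdot\frac{(1+0.9\delta)\log n}{D}\ =\ \frac{1+0.9\delta}{6(1+\delta)}\cdot\frac{\theta\zeta k}{\bbb\eps}\ \ge\ \frac{\theta\zeta k}{18\bbb\eps},
\]
where the last inequality uses $3(1+0.9\delta)\ge 1+\delta$. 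Theorem~\ref{TJanson} then gives $\mathbb{P}(U_p\text{ independent in }H')\le\exp[-\mu^2/\overline\Delta]\le\exp[-\theta\zeta k/(18\bbb\eps)]$, which is the bound \eqref{probEmpty} requires.

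The step that needs care is the Janson estimate: the naive symbolic form of $\mu^2/\overline\Delta$ is far too lossy (keeping $\zeta$, $\eps$, $\log n$ all symbolic one apparently only gets an exponent of order $\theta^2\zeta k/\eps$, which does not beat $\theta\zeta k/(18\bbb\eps)$). The argument works because for large $D$ the term $2\bbb\eps D p$ swamps the "$1$" in the denominator, so the effective exponent is of order $e(H')/(\bbb\eps D)=\Omega(\theta\zeta D/\eps)$ — and this beats the target precisely because $k=(1+\delta)\log n\le D+1$, i.e. $D\gtrsim k$. The only other thing to pin down is the monotone coupling reducing the non-uniform inclusion of vertices into $J_i$ to a genuine $p$-random set so that Theorem~\ref{TJanson} applies verbatim, which is routine.
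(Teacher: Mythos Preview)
Your proof is correct and essentially identical to the paper's: both condition on the full history through step $i-1$, lower-bound $|H_{i-1}\cap H_{\gamma_i}|$ by $\theta\zeta D^2/2$ via exactly the same accounting (Observation~\ref{OHgamma} minus $2\eta D\cdot\bbb\eps D$ for paired endpoints minus $m\cdot 6\zeta D/\delta$ for the $z_j$'s, using $R_i$), and then apply Janson with $\mu=Mp^2$ and $\overline\Delta\le\mu+2M\bbb\eps Dp^3$ to arrive at $\mu^2/\overline\Delta\ge\theta\zeta k/(18\bbb\eps)$. Your monotone-coupling reduction to a genuine $p$-random set plays the same role as the paper's ``worst case'' measure $\pr^*$, and your final arithmetic (absorbing the ``$1$'' into $3\bbb\eps Dp$ since $\eps\log n\to\infty$) is just a reorganisation of the paper's $\mu>\theta\zeta k^2/3$, $\overline\Delta<2\bbb\eps\theta\zeta k^3$.
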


\nin
(This is roughly the best one can expect: it can happen that 
$H$ can be covered by some $T \subseteq C$ of size
$O(\zeta D^2/(\eps D)) = O(\zeta D/\eps)$, and the probability in
(\ref{probEmpty}) is then at least about $(1-k/D)^{|T|} = \exp[-O(\zeta k/\eps)].$)
\begin{proof}
Under conditioning on any $(L_v \cap \{\gamma_1, \cdots, \gamma_{i-1}\})_{v \in C}$
for which $Q_i R_i$ holds, we have
\begin{align*}
\mbox{$\mathbb{P}(v \in J_i ) > p\,\,$ 
for each $v \in C_{i-1}$ with $\gamma_i \in T_v$.}
\end{align*}
(see (\ref{getv})) and
\begin{align}
\nonumber |H_{i-1} \cap H_{\gamma_i}| &\geq 2\theta \zeta D^2/3 - 
[2\eta D \cdot  \bbb\eps D + m \cdot 6 \zeta D / \delta] \\
\nonumber &\geq \theta \zeta D^2/2 =: M.
\end{align}
Here the $2\theta \gz D^2/3$ is from Observation~\ref{OHgamma} and the
subtracted terms bound, respectively:
the number of edges of $H$ meeting members of $N_{i-1}$
(using $|N_{i-1}| < \eta D$), and
the number of edges of $ H$ containing $z_j$'s with
$j<i$ (using $R_i$). 
The second inequality is given by (\ref{etadef}) and (\ref{mllD}).

So it is enough to show
\begin{align*}
\mathbb{P}^*(H[J_i] = \emptyset) < \exp\left[-\frac{\theta \zeta k}{18 \bbb\eps}\right],
\end{align*}
where we write $\mathbb{P}^*$ for probabilities under the worst case assumption
\begin{align*}
\mbox{$\{|H_{i-1} \cap H_{\gamma_i}| = M\} \land \{\pr(v \in J_i) = p \,\,$
for each $v \in C_{i-1}$ with $\gamma_i \in T_v$\}.}
\end{align*}
This is an application of Theorem~\ref{TJanson}, for which we have
(say)
\begin{align*}
\mu = M p^2 > \theta \zeta k^2 /3
\end{align*}
and (since degrees in $H_{i-1}$ are less than $\bbb\eps D$; see \eqref{EDepsD})
\begin{align*}
\overline{\Delta} < 2M\bbb\eps D p^3 + \mu < 2 \bbb\eps \theta \zeta k^3.
\end{align*}
So the theorem gives
\begin{align*}
\pr^*(H[J_i] = \emptyset) \leq \exp[-\mu^2/\overline{\Delta}] < \exp \left[- \frac{\theta \zeta k}{18 \bbb\eps} \right].
\end{align*}
\end{proof}
We can now dispose of the last item in Lemma~\ref{Lsucc}, namely
\begin{align}
\label{gets3} \pr(\text{(S3)}) = 1-o(D/n).
\end{align}
Let $\psi_1, \cdots, \psi_m$ be independent Ber$(q)$ r.v.s and $\psi = \sum \psi_i$. 
We first show that (\ref{gets3}) will follow from
\begin{align}
\label{psiconc} \pr(\psi < \eta D) < o(1/n). 
\end{align}
To see this, set (for $i \in [m]$)
\begin{align*}
\xi_i = \begin{cases}
0 \text{ if } Q_i R_i \text{ fails or } |N_{i-1}| \geq \eta D, \\
\mathbbm{1}_{\{H[J_i] = \emptyset \}} \text{ otherwise,}
\end{cases}
\end{align*}
and notice that
\begin{align*}
\{ \text{(S3) fails} \} = \{|N_m| < \eta D\} \subseteq \cup \overline{Q_i R_i} \cup \left\{\sum \xi_i > m - \eta D \right\}
\end{align*}
(since $\{|N_m| < \eta D\} \cap \cap (Q_i R_i)$ implies $\sum \xi_i = m - |N_m|$).
On the other hand Proposition \ref{reasonforq} says we can couple $(\xi_i)$ and $(\psi_i)$
with $\xi_i \leq 1 - \psi_i$, whence, using \eqref{S1holds} (which gives $\pr(\ov{Q}_i) =o(1/n)$) and \eqref{Riineq},
\begin{align*}
\pr(\text{(S3) fails})
\leq \pr(\cup \overline{Q_i R_i}) + \pr(\sum \xi_i > m-\eta D) 
<o(D/n) + \pr(\psi < \eta D).
\end{align*}

\begin{proof}[Proof of (\ref{psiconc})]
We have $\E \psi = mq = K \eta D$
(see (\ref{defm})), so Theorem~\ref{T2.1} gives (say)
\begin{align*}
\pr(\psi < \eta D) < \exp[-K \eta D/3].
\end{align*}
This implies (\ref{psiconc}) if (say) $\theta \zeta k / \eps \leq 1$,
since in this case $q = \Theta(\theta \zeta k / \eps)$
and, by (\ref{Kdef}), 
\begin{align*}
K \eta D \gg \eps q / (\theta \zeta) = \Theta(k) \,\,\,( = \Theta(\log n)).
\end{align*}
When $\theta \zeta k / \eps > 1$ 
(in which case $q=\gO(1)$), we need a better (naive) bound:
setting $\vartheta = 1-q = \exp \left[- \frac{\theta \zeta k}{18 \bbb\eps} \right]$,
and using $m = K \eta D/q$,
we have
\begin{align*}
\pr(\psi < \eta D) < {m \choose \eta D} \vartheta^{m - \eta D}
< \exp[\eta D \log(eK/q) - \Theta((\theta \zeta k/\eps) K \eta D )] = o(1/n).
\end{align*}
Here the last inequality (an understatement) is again based on (\ref{Kdef}),
which in ``exp" gives: 

\nin
(a) $K \eta D (\theta \zeta / \eps) \gg q =\Omega(1),$
so the second term, say $T$, satisfies $T \gg k$; and 

\nin
(b) $T$ dominates the first term, since $K \gg 1$ (by (\ref{Kdef})), $\theta \zeta k/\eps > 1$,
and $q = \Omega(1)$.
\end{proof}

This completes the proof of Lemma~\ref{Lsucc}.

\section{Matchings}\label{Matchings}

The main purpose of this section is to prove Lemma~\ref{LMg'A}, which we recall:
\begin{lemma}\label{LMg'A'}
Let $B$ be bipartite on $\UUU\cup Z$, $|\UUU|=\JJJ= (1 \pm 1.1\theta)D$,
and
\beq{dFx}
d_B(\vvv)= \JJJ+R-r(\vvv) \le \DD \,\,\,\,\forall \vvv\in \UUU
\enq 
(note $r(v)$ can be negative), with

\nin
{\rm (a)}  $R=0\,$ if $\gz$ is small, 

\nin
{\rm (b)}   $\theta D\ge R\gg\max\{1,\gz D\}\,\,$ 
if $\gz$ is large,

\nin
and  
\beq{r(x)}
\mbox{$r^+(\vvv) =O(\eps D) \,\,\forall \vvv\in \UUU\,\,$ and
$\,\,
\sum_{\vvv\in \UUU}r^+(\vvv) \leq 2\gz D^2$.}
\enq
Fix $\vs\geq \gd/4$ and let $(t_\vvv:\vvv\in \UUU)$ be positive integers,
each at least $(1+\vs)\log n$.
For each $\vvv\in \UUU$ choose $\NNN_\vvv $ uniformly from $\C{N_B(\vvv)}{t_\vvv}$,
these choices independent,
and let 
\[
K = \{(\vvv,\gc): \vvv\in \UUU, \gc\in \NNN_\vvv\}.
\]
Then 
$K$ contains a $\UUU$-perfect matching w.h.e.p.
\end{lemma}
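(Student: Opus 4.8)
The plan is to verify Hall's condition (in the strengthened form $|N_K(A)| > |A|$ for $\emptyset \neq A \subsetneq \UUU$, plus $|N_K(\UUU)| \geq |\UUU|$) and union-bound over $A$, exactly in the spirit of the small/large dichotomy used elsewhere in the paper. Write $a = |A| = D/(1+\nu)$ and, as at \eqref{nu0}, call $A$ \emph{small} if $\nu \geq \nu_0$ and \emph{large} otherwise. For small $A$ the naive bound suffices: for any $B \in \binom{Z}{a}$, since $d_B(\vvv) \geq \JJJ + R - r^+(\vvv) \geq (1 - O(\eps) - O(\theta))D =: (1-o(1))D$ for every $\vvv$ (using $|\UUU| = (1\pm 1.1\theta)D$, $R \leq \theta D$ and $r^+(\vvv) = O(\eps D)$), we get $\pr(N_K(A) \subseteq B) \leq \left(\tfrac{a}{(1-o(1))D}\right)^{\sum_{\vvv \in A} t_\vvv} \leq \left(\tfrac{a}{(1-o(1))D}\right)^{a(1+\vs)\ttt}$; multiplying by $\binom{|\UUU|}{a}\binom{|Z|}{a}$ (and using $|Z| = O(D)$, which, as in the body, we may assume by deleting isolated $\gc$'s and, if needed, invoking Lemma~\ref{nestineq'} on switches that don't disturb the relevant structure) kills this for all $\eps D < a \leq D/(1+\nu_0)$ with room $\gO(\vs \ttt)$ per vertex, and for $a \leq \eps D$ the $\log(1/\eps)$ gain wins as in \eqref{smallbd}.

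For large $A$ the naive per-$B$ worst case is too weak — this is the point where we import the machinery of the ``large $A$'' case in Section~\ref{Clusters}. With $\xi_\gc = \mbone_{\{\gc \not\sim_K A\}}$ and $\xi = \sum_\gc \xi_\gc = |Z| - |N_K(A)|$, we have $\pr(|N_K(A)| \leq a) = \pr(\xi \geq |Z| - a)$, and by negative association of the $\xi_\gc$'s (the four facts (i)--(iv) cited after \eqref{primitive}) and the exponential-moment bound \eqref{primitive}, $\pr(\xi \geq |Z|-a) \leq e^{-\vs'(|Z|-a)}\prod_\gc(1 + p_\gc(e^{\vs'}-1))$ with $p_\gc = (1 - t_{\min}/D)^{d_\gc^A}$, $d_\gc^A := |N_B(\gc) \cap A|$. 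The relevant structural facts here are $d_\gc^A \leq a$ for all $\gc$, and $\sum_\gc d_\gc^A = |\nabla_B(A)| = \sum_{\vvv \in A}(\JJJ + R - r(\vvv)) \geq a(\JJJ + R) - \sum_{\vvv}r^+(\vvv) \geq a\JJJ + aR - 2\gz D^2$. Feeding $\E[X] \geq (a\JJJ + aR - 2\gz D^2)/|Z|$ into Proposition~\ref{Pcvx} with $g(x) = \log(1 + \vartheta^x(e^{\vs'}-1))$ (convex, $g(a) < g(0)$), $\vartheta = 1 - t_{\min}/D$, and $b = a$ trivially (or, better, a two-point bound), the product is at most $e^{\vs'\sfc}(1 + \vartheta^a(e^{\vs'}-1))^{\sfa}$-type expression, giving, after optimizing $\vs'$ as at \eqref{setgz2}, $\pr(|N_K(A)| \leq a) \leq \exp[-(y'-a)(\log\frac{y'-a}{\mu'} - 1)]$ where $y' - a \gtrsim aR/|Z| + (\text{slack from } \nu < \nu_0)\cdot a = \gO(R) + \gO(a)$ when $\gz$ is large (using $R \gg \gz D$), and $\mu' \leq y'\vartheta^a < y' n^{-(1+\vs)}$. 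Since $R \gg \gz D \gg \eps$ and $R\gg 1$, the resulting bound is $n^{-\gO_\gd(R/D \cdot \log n)}$ or better; but in fact for large $A$ the gain from $\nu < \nu_0$ alone already gives $y' - a = \gO(a)$ with $\mu' < n^{-\gd'}$, so $\pr \leq n^{-\gO(a)}$, which dominates $\binom{|\UUU|}{a}\binom{|Z|}{a} \leq \exp[O(a)]$ handily. When $\gz$ is small, $R = 0$ and the large-$A$ argument still goes through off the slack $(\vr - \nu)$-term (or more simply the $\nu < \nu_0$ gain), with the error $2\gz D^2 = o(D^2)$ harmless.

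Finally, $|N_K(\UUU)| \geq |\UUU|$: taking $A = \UUU$ in the large-$A$ computation (note $|\UUU| = \JJJ = (1\pm 1.1\theta)D$ is itself ``large'' since $1.1\theta < \nu_0$), the same bound gives $\pr(|N_K(\UUU)| < |\UUU|) = n^{-\omega(1)}$.

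\textbf{Main obstacle.} The delicate point is the large-$A$ case: ensuring the convexity input to Proposition~\ref{Pcvx} is set up so that the optimized exponential bound beats the $\exp[O(a)]$ from $\binom{|\UUU|}{a}\binom{|Z|}{a}$ \emph{uniformly} in $\nu \in [0,\nu_0)$ and simultaneously handles both $\gz$-small ($R = 0$) and $\gz$-large ($R \gg \gz D$) regimes — in particular tracking that the $-2\gz D^2$ deficit in $\sum_\gc d_\gc^A$ and the $|x|, |\nabla_v| = O(\eps D)$ perturbations stay negligible against the $\vr$-slack, as in the chain following \eqref{pqx}. Everything else (small $A$, the $|Z| = O(D)$ reduction, the $A = \UUU$ case) is routine and parallels Section~\ref{Clusters} closely.
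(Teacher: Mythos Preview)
Your approach differs substantially from the paper's, and has a genuine gap.

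The paper does \emph{not} attack Hall's condition for $K$ directly.  Instead it first invokes Corollary~\ref{Cnesting} (repeated use of Lemma~\ref{nestineq'}) to reduce to the case where, for some ordering of $Z$, each $\vvv$ is adjacent to an initial segment of $Z$; it then truncates to $Z' = $ first $J+R$ elements of $Z$, checks (via a Chernoff bound like \eqref{NKK'}) that w.h.e.p.\ $|W_\vvv\setminus Z'| < (\vs/2)\log n$ for every $\vvv$, and applies the simpler Lemma~\ref{LMg.simple} to $B' = B[U,Z']$ with $\vs$ replaced by $\vs/2$.  Lemma~\ref{LMg.simple} itself is proved via a \emph{two-sided} Hall argument: sets $A\subseteq U$ with $|A|\le \lceil J/2\rceil$ from one side, and sets $A\subseteq Z$ with $1+R\le |A|\le \lfloor J/2\rfloor +R$ from the other (the latter via the isolated-vertex/\eqref{either}--\eqref{or} split).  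The point is that large $A\subseteq U$ become small $A\subseteq Z$, where the combinatorics are tractable.

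Your one-sided argument, which imports the Section~\ref{Clusters} ``large $A$'' machinery, breaks down precisely for $a$ close to $J$.  The Section~\ref{Clusters} bound \eqref{Aa1a2} works because of the two-level $\pee/\U$ structure and the assumption \eqref{Ssize} ($s > \sqrt{\theta\gz}D$), which together force $y'-a = p + \vr's$ with $\vr'\approx \theta\vr/4 > 0$; this structure is absent here.  With only the trivial two-point convexity input you describe, one gets $y' - a \approx (J-a) + R - 2\gz D^2/a$.  In case (a) ($R=0$, $\gz$ small) and $a=J$ this is $\approx -2\gz D$, so the exponential-moment bound \eqref{setgz2} gives nothing; and for $a$ slightly below $J$ the resulting bound does not beat the union bound uniformly in $n$ and $D$ (e.g.\ for $a=J-1$ you would need $D^2 n^{-(1+\vs)} = o(D/n)$, i.e.\ $n^{\vs}\gg D$, which is not guaranteed).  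Your claim that ``the gain from $\nu<\nu_0$ alone already gives $y'-a=\Omega(a)$'' is simply false as $\nu\to 0$.  The two-sided Hall argument (or equivalently the isolated-vertex analysis on the $Z$-side) is what actually handles this regime; you cannot avoid it.
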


\nin
This will follow from Lemma~\ref{LMg.simple} with an assist from the following
statement, which is gotten by repeated application of Lemma~\ref{nestineq'}.
\begin{cor}\label{Cnesting}  
For $U,Z,(t_\vvv:\vvv\in U)$ and $L$ as in Lemma~\ref{nestineq'},
and any $(d_\vvv:\vvv\in U)$, the probability that $L\cap F$ admits an $U$-perfect matching is minimized 
over $F$ with $d_F(\vvv)=d_\vvv $ $\forall \vvv\in U$
when, for some ordering of $Z$, each $\vvv\in U$ is adjacent to the first $d_\vvv$ vertices of $Z$.
\end{cor}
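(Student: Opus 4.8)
The plan is to derive Corollary~\ref{Cnesting} from Lemma~\ref{nestineq'} by a ``switching to a sink'' argument. First I would record the one structural fact needed: the switch operation $(\gb,\gc)$ on $Z$ preserves every left-degree. Indeed, for $\vvv\in U$ the quantity $d_{F^{\gb,\gc}}(\vvv)-d_F(\vvv)$ equals $[\vvv\in N_F(\gb)\cup N_F(\gc)]+[\vvv\in N_F(\gb)\cap N_F(\gc)]-[\vvv\in N_F(\gb)]-[\vvv\in N_F(\gc)]$, which is $0$ by inclusion--exclusion at the single point $\vvv$. Hence switching keeps $d_F(\vvv)=d_\vvv$ for all $\vvv$, so in particular keeps the constraint $t_\vvv\le d_F(\vvv)$, and Lemma~\ref{nestineq'} applies at every step and tells us that each switch \emph{weakly decreases} the probability that the associated random $L$ admits a $U$-perfect matching. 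It therefore suffices to show that, starting from an arbitrary $F$ with the prescribed left-degrees, finitely many switches reach a configuration $F^*$ in which $\{N_{F^*}(z):z\in Z\}$ is a chain under inclusion: ordering $Z$ so that these neighborhoods weakly decrease, each $\vvv\in U$ is then adjacent precisely to the first $d_\vvv$ vertices of $Z$, i.e.\ $F^*$ is the nested graph in the statement. Since all such $F^*$ (with the given left-degree multiset) are isomorphic, they share one matching probability, and the above monotonicity makes it the minimum over all admissible $F$.

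To produce $F^*$ I would run the obvious greedy procedure: while some pair $\gb,\gc\in Z$ has $N_F(\gb),N_F(\gc)$ incomparable, perform the switch $(\gb,\gc)$. Termination is controlled by the potential $\Phi(F)=\sum_{z\in Z}d_F(z)^2$. A switch replaces the unordered pair $\{d_F(\gb),d_F(\gc)\}$ by $\{|N_F(\gb)\cup N_F(\gc)|,\,|N_F(\gb)\cap N_F(\gc)|\}$, whose sum is unchanged (inclusion--exclusion again) and whose difference is $|N_F(\gb)\bigtriangleup N_F(\gc)|\ge\big||N_F(\gb)|-|N_F(\gc)|\big|$, with strict inequality exactly when $N_F(\gb),N_F(\gc)$ are incomparable. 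Since for fixed sum the sum of two squares strictly increases with the spread, $\Phi$ is nondecreasing under any switch and strictly increasing at each step of the procedure; as $\Phi$ is a nonnegative integer bounded by $|U|\sum_\vvv d_\vvv$, the procedure halts after finitely many steps. It can only halt when no incomparable pair remains, i.e.\ at a nested $F^*$, which is what was needed.

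I expect no real obstacle here: the argument is essentially routine, the content being entirely carried by Lemma~\ref{nestineq'}. The only points requiring a little care are the two verifications just described --- that switching fixes the left-degrees (a four-case check on whether a given $\vvv\in U$ lies in $N_F(\gb)$, in $N_F(\gc)$, in both, or in neither) and that $\Phi$ is strictly increased by a switch on an incomparable pair --- together with the (trivial) remark that all nested configurations with a fixed left-degree multiset are isomorphic as bigraphs and hence give the same matching probability.
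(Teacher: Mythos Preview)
Your proposal is correct and is exactly the approach the paper takes: the paper's proof is simply the sentence ``gotten by repeated application of Lemma~\ref{nestineq'}'' followed by \qed, and you have filled in precisely the details the authors left implicit (that switching preserves the left-degrees $d_\vvv$, that the process terminates via the potential $\sum_z d_F(z)^2$, and that all nested configurations with the given left-degree sequence are isomorphic).
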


\qed

\begin{lemma}\label{LMg.simple}
Let $B$ be bipartite on $\UUU\cup Z$,
with $|\UUU|=J = (1 \pm 1.1 \theta) D$; $|Z|=J+R$; 

\nin
{\rm (a)}  $R=0\,$ if $\gz$ is small,

\nin
{\rm (b)}   $\theta D\ge R\gg\max\{1,\gz D\}\,$ 
if $\gz$ is large;

\nin
and 
\beq{r(x)'}
\mbox{$d_{\oB}(\vvv) \ll D \,\, \forall \vvv\in \UUU\,$ and
$\,\,
\sum_{\vvv\in \UUU}d_{\oB}(\vvv)  \leq 2\gz D^2$.}
\enq

\nin
Fix $\vs \geq  \gd/8$ and let $(t_\vvv:\vvv\in \UUU)$ be positive integers,
each at least $(1+\vs)\log n$.
For each $\vvv\in \UUU$ choose $\NNN_\vvv $ uniformly from $\C{N_B(\vvv)}{t_\vvv}$,
these choices independent,
and let 
\[
K = \{(\vvv,\gc): \vvv\in \UUU, \gc\in \NNN_\vvv\}.
\]
Then 
$K$ contains an $\UUU$-perfect matching w.h.e.p.
\end{lemma}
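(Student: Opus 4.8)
The plan is to show that w.h.e.p.\ $K$ satisfies Hall's condition for a $\UUU$-perfect matching, i.e.\ $|N_K(A)|\ge |A|$ for every nonempty $A\sub \UUU$. First, by Corollary~\ref{Cnesting} (applied with $d_\vvv = d_B(\vvv)$), the probability that $K$ has no $\UUU$-perfect matching is largest when the neighborhoods $N_B(\vvv)$ are nested---say $Z=\{\gc_1,\gc_2,\ldots\}$ with $N_B(\vvv)=\{\gc_1\dots\gc_{d_B(\vvv)}\}$---so we may assume this. Then $d_B(\gc_1)\ge d_B(\gc_2)\ge\cdots$, and since $\sum_\vvv d_{\ov B}(\vvv)\le 2\gz D^2$ with each $d_{\ov B}(\vvv)\ll D$, all but $\ll D$ of the colors have full degree $J$ and every color $\gc$ has $d_B(\gc)\ge J-\max_\vvv d_{\ov B}(\vvv)=(1-o(1))D$ (when $\gz$ is small, $\sum_\vvv d_{\ov B}(\vvv)=o(D)$ strengthens this uniformly anyway). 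We discard colors of degree $0$ and record that for a fixed $A$ and $\gc$, $\pr(\gc\not\sim_K A)\le\vt^{|N_B(\gc)\cap A|}$ with $\vt:=n^{-(1-o(1))(1+\vs)/D}$ (from $t_\vvv\ge(1+\vs)\log n$, $d_B(\vvv)\le(1+o(1))D$).

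For \emph{small} $A$, say $|A|=a\le(1-\nu_0)D$ with $\nu_0=\vr/2$, use a direct union bound: Hall fails at some such $A$ only if $N_K(A)\sub B_0$ for some $B_0\in\binom{Z}{a-1}$, and
\[
\pr(N_K(A)\sub B_0)\le\prod_{\vvv\in A}\Bigl(\tfrac{|N_B(\vvv)\cap B_0|}{d_B(\vvv)}\Bigr)^{t_\vvv}\le\Bigl(\tfrac{a}{(1-1.2\theta)D}\Bigr)^{a(1+\vs)\log n}
\]
(using $d_B(\vvv)\ge J-d_{\ov B}(\vvv)\ge(1-1.2\theta)D$ and $a<(1-1.2\theta)D$). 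Multiplying by $\binom{J}{a}\binom{|Z|}{a-1}\le(O(D)/a)^{2a}$ and summing over $a$ gives $o(D/n)$: for $a\le\eps D$ the per-vertex factor dominates once $\eps$ is small in terms of $\gd$, and for $\eps D<a\le(1-\nu_0)D$ the gap $\log((1-1.2\theta)D/a)\ge\gO_\gd(1)$ yields $n^{-\go(1)}$. This is essentially the ``small $A$'' step in the proof of Claim~\ref{just.match}.

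For \emph{large} $A$, $a=|A|\ge(1-\nu_0)D$, pass to the complement: with $\xi^A=|Z|-|N_K(A)|=\sum_\gc\mbone_{\{\gc\not\sim_K A\}}$, Hall fails at $A$ iff $\xi^A\ge|Z|-a+1=R+(J-a)+1$. The summands are negatively associated (the $\mbone_{\{\gc\not\sim_K\vvv\}}$ are NA and $\{\gc\not\sim_K A\}$ is a nondecreasing function of a block of these, disjoint across $\gc$; cf.\ the NA facts invoked in Claim~\ref{just.match}), so an exponential-moment bound applies; bounding $\prod_\gc(1+\pr(\gc\not\sim_K A)(e^\lambda-1))$ via Proposition~\ref{Pcvx} (with $\gb=0$, convexity of $x\mapsto\log(1+\vt^x(e^\lambda-1))$, and the constraints $|N_B(\gc)\cap A|\le a$ and $\sum_\gc|N_B(\gc)\cap A|\ge a|Z|-2\gz D^2$, as in Claim~\ref{Yclaim}) and optimizing $\lambda$ yields a bound of shape $\exp[-k'(\log(k'/\mu)-1)]$ with $k'=R+(J-a)+1-O(\gz D)$ (so $k'\gtrsim 1$) and $\mu=|Z|\vt^a$. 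Here $\mu\le(1+o(1))D\,n^{-(1-o(1))(1+\vs)a/D}\le n^{-\gO_\gd(1)}$ (using $a/D\ge1-\nu_0$, $\vs\ge\gd/8$, $D\le n$), so the per-$A$ probability is at most $n^{-\gO_\gd(k')}$; when $\gz$ is large this is backed by $k'\gtrsim R-O(\gz D)\gg\max\{1,\gz D\}$ (as $R\gg\gz D$), and when $\gz$ is small ($R=0$, $\gz D\ll1$) the smallest defects are instead handled by the sharper observation that Hall failure forces $\ge R+1$ isolated colors or, more generally, a set $S\subsetneq\UUU$ all of whose colors number more than $|S|$---in particular $\ge1$ isolated color, an event of probability $\le|Z|\vt^{(1-o(1))D}=O(D)\,n^{-(1+\vs)(1-o(1))}=o(D/n)$ since $|Z|=O(D)$. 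A union bound over $A$, organized by the size $j=J-a$ of the defect set $\UUU\sm A$ (for which there are $\binom{J}{j}$ choices), then finishes.

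The main obstacle is exactly this large-$A$ regime: it is the finitary counterpart of, and a real strengthening of, the delicate core of \cite[Lemma~7.3]{APS} and of the ``large $A$'' analysis of \eqref{K'contains}. Two points need care. First, one must beat the union bound $\binom{J}{j}\approx D^j$ over defect sets with concentration that is only polynomially strong in $n$ (since $\vs$ is a fixed, possibly small constant); this is easy when $n$ is large relative to $D$ (so $\log D=o(\log n)$) but, when $n$ is merely polynomial in $D$, forces one to exploit that the list size $(1+\vs)\log n$ exceeds $\log|Z|$ by $\gO(\vs\log|Z|)$ and to fall back on the classical ``perfect matching iff no isolated vertex'' behavior there. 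Second, near the threshold $R\asymp\max\{1,\gz D\}$ the quantity $k'$ is only $\gO(1)$, so the moment bound gives little; the smallest defects must then be treated by the direct ``few almost-isolated colors'' estimate rather than by the moment bound. Keeping all color-unions at cost $\exp[O(\log D)]$ throughout (thanks to $|Z|=O(D)$) is what makes both of these manageable.
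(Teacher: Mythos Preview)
Your approach is in the right spirit, and you correctly identify the difficult point (the large-$A$/small-defect regime), but the resolution you sketch has a genuine gap. The paper does \emph{not} use the exponential-moment/Proposition~\ref{Pcvx} machinery here; that machinery appears only in the proof of Claim~\ref{just.match}, where the extra structure (the set $S$ of vertices with large $\U$-degree, giving the $s$ in \eqref{slogn}) supplies slack that is absent in Lemma~\ref{LMg.simple}.

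Concretely: for $A\sub\UUU$ with defect $j:=\JJJ-|A|\ge 1$, your moment bound gives roughly $\exp[-k'(\log(k'/\mu)-1)]$ with $k'\approx R+j+1$ and $\mu\approx D\,n^{-(1+\vs)}$. When $\gz$ is small ($R=0$) and $n$ is only polynomial in $D$ (say $n\approx D$), this is about $D^{-(j+1)\vs}$, and multiplying by the $\binom{\JJJ}{j}\approx D^j$ choices of $A$ gives $D^{j(1-\vs)-\vs}$, which is \emph{not} $o(D/n)$ once $j\ge 1$ and $\vs<1/2$. You note that ``the smallest defects must then be treated by the direct `few almost-isolated colors' estimate,'' but you only carry this out for $j=0$ (one isolated color); for $j\ge 1$ you never say what the estimate is. (Also, your preliminary use of Corollary~\ref{Cnesting} is unnecessary here and undercuts the isolated-color argument, since after nesting the last few colors can have arbitrarily small $B$-degree.)

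The paper avoids this by reformulating Hall's condition with the \emph{$Z$-side} witness for large sets (see \eqref{HCB}) and then, for each size, splitting into ``either $\ge a$ colors are isolated in $K$'' or ``there is $A\sub Z\sm I$ with $|N_K(A)|\le u-1$,'' where $a=1$ in case~(a) and $\max\{1,\gz D\}\ll a\ll R$ in case~(b). The point is that in the ``or'' alternative every $z\in A$ already has a $K$-neighbor in the small set $Y=N_K(A)$; this produces the extra factor $(uq)^{|A|}$ in \eqref{Por}, and it is \emph{this} factor (independent events, not a moment bound) that kills the $\binom{\JJJ}{u-1}$ union cost uniformly in $n$. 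Your moment argument cannot recover this factor, because once you pass to $\xi^A=\sum_\gc\mbone_{\{\gc\not\sim_K A\}}$ you have thrown away the information that non-isolated colors in the witness must hit a \emph{specific} small $Y$.
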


\nin
(Note this differs from Lemma~\ref{LMg'A'} only in that it has no $r(v)$ (or \eqref{dFx}),
replaces \eqref{r(x)} by \eqref{r(x)'}, and slightly relaxes the lower bound on $\vs$.)

\mn

Most of this section is devoted to the proof of Lemma~\ref{LMg.simple}, which is similar to
that of \cite[Lemma~7.3]{APS}; but before turning to this we record the easy derivation of
Lemma~\ref{LMg'A} (a.k.a.\ Lemma~\ref{LMg'A'}).

\begin{proof}[Proof of Lemma~\ref{LMg'A}]
By Corollary~\ref{Cnesting}, it is enough to prove Lemma~\ref{LMg'A'}
when each $\vvv\in \UUU$ is adjacent to the first $d_\vvv$ vertices of $Z$
(for some ordering of $Y$).
Assuming this, we show a little more:  with $Z'$ comprising the first $J+R$ elements of $Z$,
\beq{KB'}
\mbox{w.h.e.p. $K[\UUU,Z']$ contains a $\UUU$-perfect matching,}
\enq

We just need two small observations to allow use of Lemma~\ref{LMg.simple}.
First, with $B'=B[\UUU,Z']$ (and $\vvv\in\UUU$),
\[
d_{B'}(\vvv) = \min\{d_{B}(\vvv),|Z'|\} \geq J+R-r^+(\vvv);
\]
that is, $d_{\ov{B'}}(\vvv) \le r^+(\vvv)$.

Second, since $|N_B(\vvv)\sm Z'| \leq \DD - (J+R) \le 1.1 \theta D$
(so $\E |\NNN_\vvv\sm Z'| =O(\theta \log n)$,
Theorem~\ref{Cher'} says 
(as in the proofs of \eqref{NKK'}, \eqref{whep2}) that w.h.e.p.
\beq{LxZ'}
|\NNN_\vvv\sm Z'| < (\vs/2)\log n \,\,\forall \vvv\in \UUU.
\enq
So we may assume \eqref{LxZ'} holds, and \eqref{KB'} 
is then given by Lemma~\ref{LMg.simple} with 
$B$ and $\vs$ replaced by $B'$ and $\vs/2$.

\end{proof}

\nin
\emph{Remark.}
The original version of Section~\ref{Clusters} did not use \eqref{NKK'} and \eqref{LxTx},
and was based on
the following variant of Lemma~\ref{nestineq'} which, though no longer
needed here, may still be of interest. 

\begin{cor} \label{graphingraph}
Let $G$ and $F \subseteq G$ be bipartite on $\UUU \cup \ZZZ$, $\gb,\gc \in \ZZZ,$ 
$G' = G^{\gb,\gc},$ 
and $F' = F^{\gb,\gc}.$

Let $(s_\vvv : \vvv \in \UUU)$ be natural numbers with $s_\vvv \leq d_G(\vvv)$ 
and let $M$ and $M'$ be
 random subgraphs of $G,G'$, with $M$ gotten by
choosing neighborhoods $N_M(\vvv)\  (\vvv \in \UUU)$ independently, each uniform 
from ${N_G(\vvv) \choose s_\vvv}$, and $M'$
gotten similarly, with $N_{G'}$ in place of $N_G.$ Let $L = M \cap F$ and $L' = M' \cap F'.$ 
Then
\beq{ingraphineq}
\pr(\mbox{$L'$ admits a $\UUU$-perfect matching})\leq \pr(\mbox{$L$ admits a $\UUU$-perfect matching}) .
\enq
\end{cor}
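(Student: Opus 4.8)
The plan is to reduce this to Lemma~\ref{nestineq'} by a conditioning argument, viewing $L'$ and $L$ through their ``$F$-parts'' and ``non-$F$-parts.'' First I would note that switching $(\gb,\gc)$ is a local operation: it only touches the two colors $\gb,\gc\in\ZZZ$, and for every $\vvv\in\UUU$ it changes $N_G(\vvv)$ by (at most) moving $\vvv$ from being adjacent to exactly one of $\gb,\gc$ to being adjacent to $\gb$ only, or from being adjacent to both to being adjacent to $\gc$ only; the same description applies to $F$ with $N_F$ in place of $N_G$. The key structural point is that $F\sub G$ implies the switch is ``compatible'': for each $\vvv$, $N_{F'}(\vvv)\sub N_{G'}(\vvv)$ still holds, and moreover $N_{G'}(\vvv)\sm N_{F'}(\vvv)$ and $N_G(\vvv)\sm N_F(\vvv)$ differ in a controlled way (only possibly through $\gb,\gc$). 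So I want to couple the choice of $N_M(\vvv)$ with that of $N_{M'}(\vvv)$ so that the induced neighborhoods $N_L(\vvv)=N_M(\vvv)\cap N_F(\vvv)$ and $N_{L'}(\vvv)=N_{M'}(\vvv)\cap N_{F'}(\vvv)$ satisfy the hypotheses of (a suitable instance of) Lemma~\ref{nestineq'}.

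The cleanest route, I expect, is to condition on the restrictions of $M$ and $M'$ to $\ZZZ\sm\{\gb,\gc\}$ (coupled to be equal, vertex by vertex, which is legitimate since for each $\vvv$ the distribution of $N_M(\vvv)$ restricted to $\ZZZ\sm\{\gb,\gc\}$ is determined by $s_\vvv$ and $d_G(\vvv)$ minus the contribution of $\{\gb,\gc\}$, and likewise for $M'$ — one matches up the two by a further conditioning on how many of $\vvv$'s $s_\vvv$ colors land in $\{\gb,\gc\}$). Having fixed everything outside $\{\gb,\gc\}$, the problem collapses to a two-color version: we have a fixed bipartite graph on $\UUU$ (the already-revealed $L$-edges to $\ZZZ\sm\{\gb,\gc\}$, which equals the revealed $L'$-part since $F$ and $F'$ agree away from $\gb,\gc$) together with the random edges from $\UUU$ to $\{\gb,\gc\}$, and we must compare $\pr(\text{$\UUU$-p.m.})$ in the switched versus unswitched model. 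Because the already-fixed parts coincide, this last comparison is \emph{exactly} a single application of Lemma~\ref{nestineq'} with $\VVV=\ZZZ$, the same $F$ restricted to this two-color world, and the $t_\vvv$ equal to the (now random but conditioned, hence fixed) number of colors in $\{\gb,\gc\}$ that $\vvv$ selected — here one uses that $N_M(\vvv)\cap N_F(\vvv)$ is uniform from $\C{N_F(\vvv)}{s_\vvv}$ conditioned on its intersection size with any fixed set, which is again uniform of the appropriate size, matching the hypothesis of Lemma~\ref{nestineq'}. Averaging the resulting inequality $\pr(L' \text{ has p.m.}\mid \cdots)\le \pr(L \text{ has p.m.}\mid \cdots)$ over the conditioning gives \eqref{ingraphineq}.

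The main obstacle will be bookkeeping in the coupling: making sure that when we condition on the ``outside $\{\gb,\gc\}$'' data we have genuinely matched the two processes so that Lemma~\ref{nestineq'} applies verbatim — in particular that, after conditioning, the residual randomness in $N_L(\vvv)$ (how $\vvv$'s remaining budget distributes over $\{N_F(\vvv)\cap\gb, N_F(\vvv)\cap\gc\}$ versus the non-$F$ colors among $\{\gb,\gc\}$) lines up between the switched and unswitched models in the way Lemma~\ref{nestineq'} requires. Since Lemma~\ref{nestineq'} is proved by exactly this kind of switching-coupling (as the text notes, ``proved in exactly the same way as \cite[Lemma~7.4]{APS}''), the honest thing is to observe that the proof of Lemma~\ref{nestineq'} already handles a fixed ambient graph and fixed already-present edges, so Corollary~\ref{graphingraph} follows by running that proof with $G$ (resp.\ $G'$) as the sampling graph and intersecting with $F$ (resp.\ $F'$) at the end; the only change is that the ``edge $(\vvv,\gc)$ present'' events must be replaced by ``edge $(\vvv,\gc)$ present \emph{and} $(\vvv,\gc)\in F$,'' which is still monotone in the relevant samples and still respects the switch because $F'=F^{\gb,\gc}$ transforms the $F$-membership indicators in the same monotone way as $G'=G^{\gb,\gc}$ transforms adjacency. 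I would therefore present the corollary as an immediate adaptation of the proof of Lemma~\ref{nestineq'}, pointing to \cite{APS} (or \cite{Kenney}) for the details and spelling out only the one new ingredient, namely that intersecting the coupled samples with $F$ and $F'$ preserves the domination because $F\sub G$ and $F'=F^{\gb,\gc}$.
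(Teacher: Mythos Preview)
Your approach is workable but considerably more complicated than what the paper does, and in the process you miss the one clean observation that makes the corollary a genuine two-line consequence of Lemma~\ref{nestineq'} used as a black box. The paper's proof is simply this: the law of the degree sequence $(d_L(\vvv):\vvv\in\UUU)$ equals the law of $(d_{L'}(\vvv):\vvv\in\UUU)$, since each $d_L(\vvv)=|N_M(\vvv)\cap N_F(\vvv)|$ is hypergeometric with parameters $(d_G(\vvv),d_F(\vvv),s_\vvv)$ and switching preserves all $\UUU$-side degrees in both $G$ and $F$. Moreover, conditioned on $d_L(\vvv)=t_\vvv$, the set $N_L(\vvv)$ is uniform from $\C{N_F(\vvv)}{t_\vvv}$ (a standard fact about hypergeometric sampling), and likewise for $L'$ with $F'$ in place of $F$. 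So for each fixed $(t_\vvv)$, Lemma~\ref{nestineq'} gives the conditional inequality, and averaging (law of total probability) gives \eqref{ingraphineq}.

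Your route---conditioning on everything outside $\{\gb,\gc\}$ and then trying to invoke Lemma~\ref{nestineq'} on the residual ``two-color world,'' or alternatively re-running the proof of that lemma---can be made to work, but it forces you into exactly the bookkeeping you worry about (tracking separately the $F$-part and non-$F$-part of the sample within $\{\gb,\gc\}$, and matching residual distributions across the switch). The paper's conditioning on the single statistic $(d_L(\vvv))$ sidesteps all of this: it reduces the corollary to a \emph{verbatim} application of Lemma~\ref{nestineq'}, with no need to open up that lemma's proof or to couple anything outside $\{\gb,\gc\}$. Incidentally, your sentence ``$N_M(\vvv)\cap N_F(\vvv)$ is uniform from $\C{N_F(\vvv)}{s_\vvv}$'' is not quite right---$s_\vvv$ is the size of $N_M(\vvv)$, not of $N_L(\vvv)$---which is a symptom of the extra complexity your approach introduces.
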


\begin{proof}  
The law of $(d_L(\vvv) : \vvv \in \UUU)$ is the same as that of 
$(d_{L'}(\vvv) : \vvv \in \UUU),$
 and for any possible sequence of $L$-degrees $(t_\vvv : \vvv \in \UUU)$, Lemma~\ref{nestineq'} says
\[    %\begin{align} \label{withconditioning} 
\pr(\mbox{$L'$ has a $\UUU$-p.m. }| d_{L'}(\vvv) = t_\vvv \,\forall \vvv)   \leq 
\pr(\mbox{$L$ has a $\UUU$-p.m. }| d_{L}(\vvv) = t_\vvv \,\forall \vvv);
\]     %\end{align}
so \eqref{ingraphineq} is given by law of total probability.
\end{proof}

\mn
\emph{Proof of Lemma~\ref{LMg.simple}.}
We want to show $K$ is likely to satisfy Hall's Condition, which we reformulate as:
\beq{HCA}
\QQQ\sub \UUU, |\QQQ|\leq \lceil J/2\rceil ~\Ra ~ |N_K(\QQQ)|\geq |\QQQ|
\enq
and
\beq{HCB}
\QQQ\sub Z, ~1+R\leq |\QQQ|\leq \lfloor J/2\rfloor +R~\Ra ~ |N_K(\QQQ)|\geq |\QQQ|-R.
\enq

The event in \eqref{HCA} fails at $\QQQ\in\C{\UUU}{\uuu}$ 
iff there is $Y\in\C{Z}{\uuu-1}$ with
\beq{LvY}
\NNN_\vvv\sub Y \,\,\,\,\,\,\forall \vvv\in \QQQ,  
\enq
the probability of which is less than (say)
\[
\C{J+R}{\uuu-1}\prod_{\vvv\in \QQQ} \left(\frac{\uuu-1 }{d_B(\vvv)}\right)^t
<
\C{J+R}{\uuu-1}\left(\frac{3\uuu }{2D}\right)^{t\uuu}.
\]

\nin
So the overall probability that \eqref{HCA} fails is less than 
\beq{SumJ2}
\sum_{\uuu=1}^{\lceil J/2\rceil}\C{J}{\uuu}\C{J+R}{\uuu-1}
\left(\frac{3\uuu }{2D}\right)^{t\uuu},
\enq
which is easily (and conservatively) seen to be $o(1/n)$, the summand being largest when $\uuu=1$.

For $\QQQ$'s contained in $Z$ we observe that if $K$ violates \eqref{HCB}, and 
$I=\{y\in Z: \mbox{$y$ is isolated in $K$}\}$,
then for any $a\in [0,R+1]$ either
\beq{either}
|I|\geq a\enq
or, for some $u\in [2,J/2]$,
\beq{or}
\mbox{$\exists$ $A\in \C{Z\sm I}{u+R-a+1}~$ 
with $~|N_K(A)|\leq u-1$.}
\enq
[If $A'\in \C{Z}{u+R}$ violates \eqref{HCB} (so $u\leq J/2$) and has fewer than $a$ 
isolates (so failure of \eqref{HCB} implies 
$u\geq 2$), then any $A\in \C{ A'\sm I}{u+R-a+1}$ is as in \eqref{or}.]

Here, if we are in (a), 
%(of Lemma~\ref{Lg.simple}, so with $R=0$), 
we take $a=1$
(which puts us in essentially the standard argument for a perfect matching in a random bigraph),
and if in (b) take some $a$ with
\beq{takeany}
\max\{\gz D,1\}\ll a\ll R,
\enq
and should show that each of \eqref{either}, \eqref{or} is unlikely.
Set 
\[
q = \min_\vvv t_\vvv/d_B(\vvv)  
\,\,(=\min\{ \pr(y\in \NNN_\vvv): \vvv\in \UUU, y\in N(\vvv)\}),
\]
noting that 
\[
q > (1+\vs)\log n/(J+R) > (1+\vs')\log n/D
\]
for some $\vs'\approx \vs$.  

\mn
\emph{For} \eqref{either}:  
The probability that all $y\in Y\in \C{Z}{a}$ are isolated is less than 
$
(1-q)^{|\nabla_B(\UUU,Y)|} \leq (1-q)^{Ja-|\ov{B}|},
$
so
\[   
\pr(\eqref{either}) ~\leq ~ \C{J+R}{a}(1-q)^{Ja-|\ov{B}|} < \left[(J+R)n^{-(1+\vs'')}\right]^a
=o(D/n),
\]   
with (again) $\vs''\approx \vs$.
Here the second inequality uses $|\ov{B}|\ll Ja$, which holds in (a) (where $a=1$) since 
$|\ov{B}|\le 2\gz D^2$ (see \eqref{r(x)'}) and
$\gz D\ll 1$, and in (b) since \eqref{takeany} gives
$|\ov{B}| \leq 2 \gz D^2\ll a D$.

\mn
\emph{For} \eqref{or}:
We have
\beq{Por}
\pr(\eqref{or})  < 
\Cc{J+R}{u+R-a+1}\Cc{J}{u-1}(1-q)^{(u+R-a+1)(J-u+1)-2\gz D^2} (uq)^{u+R-a+1}.
\enq
Here the first two factors bound the numbers of possibilities for $A $ 
and $Y\in \C{\UUU}{u-1}$ containing $N_K(A)$,
the third bounds $\pr(\nabla_K(\UUU\sm Y, A)=\0)$, and the fourth the probability that 
each $z\in A$ has a neighbor in $Y$.  (Note these last two events are independent
and the events $\{\mbox{$z$ has a neighbor in $Y$}\}$ are negatively correlated).

We may bound the second factor in \eqref{Por} by $(eJ/(u-1))^{u-1}$
and the third by 
\beq{thirdbd}
n^{-(1+0.9\vs')(u+R)(1-u/J)}.
\enq
[For \eqref{thirdbd} we have
\[   
(u+R-a+1)(J-u+1)-2\gz D^2 ~\approx ~(u+R)(J-u) ~\approx ~ (u+R)(1-u/J)D,
\]   
where the first ``$\approx$'' follows from: 

\nin 
(i) $u+R-a+1\approx u+R$
(for (a) this is just $u=u$, and for (b) it holds because
$a\ll R$; see \eqref{takeany}); 

\nin
(ii) $J-u=\Theta(D)$ (since $u\leq D/2$); and 

\nin
(iii) $\gz D^2\ll (u+R)(J-u)$ (by \eqref{epszeta} in (a) 
and \eqref{takeany} in (b)).]

So the product of the second and third factors (in \eqref{Por}) is less than
\beq{2bds}
\begin{array}{ll} 
D^{(u-1)} n^{-(1+0.7\vs')(u+ R)} &\mbox{if $u< 0.1\vs' J$,}\\
n^{-u/2}&\mbox{otherwise.}
\end{array}
\enq
(Recall $u\leq J/2$.)
On the other hand, the product of the first and last factors is less than
\[
\left[\tfrac{e(J+R)}{u+R-a+1}\cdot \tfrac{u(1+\vs') \log n}{J+R}\right]^{u+R-a+1}
< (3\log n)^{u+R},
\]
which is negligible relative to the either of the bounds in \eqref{2bds}
(where the comparison with the second bound uses $R<D$).
So, finally, the r.h.s.\ of \eqref{Por} is essentially at most the appropriate bound from 
\eqref{2bds}, each of which is easily $o(1/n)$.

\qed

\end{document}